\documentclass[11pt]{amsart}

\usepackage{amsmath,amssymb,epsfig}
\usepackage{graphicx}
\usepackage{fullpage}
\usepackage{amscd}
\usepackage{amsfonts, amssymb} 
\usepackage{stmaryrd}
\usepackage[all]{xy}
\usepackage{upgreek}
\usepackage{mathtools}
\usepackage{datetime2} 
\usepackage{latexsym}
\usepackage[usenames]{color}
\usepackage{xcolor}
\usepackage{url}
\usepackage{boxedminipage}
\usepackage{amsbsy}
\usepackage{framed}
\usepackage[colorlinks=false, linktocpage=true]{hyperref}

\usepackage{enumitem}

\usepackage{bm}
\usepackage{tikz-cd}
\newsavebox{\pullback}
\sbox\pullback{%
\begin{tikzpicture}%
\draw (0,0) -- (1ex,0ex);%
\draw (1ex,0ex) -- (1ex,1ex);%
\end{tikzpicture}}

\usepackage{longtable}
\usepackage{hyperref}

\usepackage[backend=biber]{biblatex}

\bibliography{references.bib}

\newcommand{\fg}{\mathfrak g}

\newcommand{\cA}{{\mathcal A}}

\newcommand{\cG}{{\mathcal G}}
\newcommand{\cL}{{\mathcal L}}
\newcommand{\cM}{{\mathcal M}}
\newcommand{\cN}{{\mathcal N}}

\newcommand{\cW}{{\mathcal W}}
\newcommand{\cP}{{\mathcal P}}
\newcommand{\cO}{{\mathcal O}}

\newcommand{\cS}{{\mathcal S}}

\newcommand{\cX}{{\mathcal X}}
\newcommand{\cY}{{\mathcal Y}}
\newcommand{\cZ}{{\mathcal Z}}
\newcommand{\R}{\mathbb{R}}

\newcommand{\Z}{\mathbb{Z}}

\usepackage{mathrsfs}

\DeclareMathAlphabet{\mathpzc}{OT1}{pzc}{m}{it}

\newcommand{\cin}{C^\infty}

\newcommand{\coker}{\mathrm{coker}}

\newcommand{\catname}[1]{\mathsf{#1}}

\DeclareMathOperator{\colim}{colim}

\DeclareMathOperator{\Hom}{Hom}
\DeclareMathOperator{\id}{id}

\DeclareMathOperator{\cone}{cone}
\DeclareMathOperator{\cocone}{cocone}
\DeclareMathOperator{\Tot}{Tot}

\numberwithin{equation}{section}

\theoremstyle{definition}
\newtheorem{thm}{Theorem}[section]
\newtheorem{lemma}[thm]{Lemma}
\newtheorem{theorem}[thm]{Theorem}

\newtheorem{proposition}[thm]{Proposition}
\newtheorem*{corollary*}{Corollary}
\newtheorem*{claim*}{Claim}

\newtheorem{definition}[thm]{Definition}
\newtheorem{remark}[thm]{Remark}
\newtheorem{example}[thm]{Example}
\newtheorem{notation}[thm]{Notation}
\newtheorem {construction}[thm]{Construction}

\begin{document}

\begin{flushright}
    \today \\[2em] 
\end{flushright}
\title{ Derived Symplectic Reduction in Differential Geometry} 
 
\author{Nikolay Sheshko}
\address{Department of Mathematics, University of Illinois Urbana-Champaign, Urbana, IL 61801, USA}
\date{}

\setcounter{tocdepth}{1}

\begin{abstract} In this article we prove a derived version of the Marsden-Weinstein-Meyer symplectic reduction theorem. We model the symplectic quotient as a dg-groupoid. We then construct the reduced symplectic form inside the Bott--Shulman complex of the groupoid. Finally, we show that the reduced form induces an isomorphism between total tangent and cotangent complexes of the groupoid.

\end{abstract}
\maketitle
\tableofcontents

\section{Introduction}

In classical mechanics, the symmetries of the system are often used to study its dynamics. An example of this is Noether's theorem, which states that symmetries lead to conservation laws. In mathematical language, the phase space of a system is modelled by a symplectic manifold $(M, \omega)$. The symplectic form allows one to naturally formulate the Hamilton equations of motion. In particular given the Hamiltonian $H \colon M \to \R$ of a system, its dynamics is encoded in a vector field $X_H$, which is determined uniquely from the Hamilton equation $\iota_{X_H} \omega  = dH$. The notion of symmetry of a system is modelled by a Hamiltonian action of a Lie group $G$ on the phase space. Such an action comes equipped with a $G$-equivariant moment map $\mu \colon M \to \fg^*$, which encodes the Hamiltonians corresponding to the infinitesimal symmetries of the system. To be more precise, for any element of the Lie algebra $X \in \fg$ there is a corresponding vector field $X^{\sharp}$ on the manifold $M$ given by the infinitesimal action of $X$, and the moment map satisfies Hamiltonian equations with respect to that vector field: $\iota_{X^{\sharp}} \omega = d\langle \mu, X\rangle$.

Symplectic reduction is a mathematical procedure of constructing a lower dimensional phase space out of a system equipped with a Hamiltonian group action. In its classical form it was developed independently by Marsden--Weinstein and Meyer \cite{MW,MEYER}. The theorem states that given a Hamiltonian $G$-action on a symplectic manifold $(M,\omega)$ with moment map $\mu\colon M\to\fg^*$ for which $0$ is a regular value, the quotient $\mu^{-1}(0)/G$ is again a symplectic manifold, provided that the action of the Lie group $G$ on the zero set $\mu^{-1}(0)$ is free and proper. The symplectic form on the reduced space is uniquely determined by the condition that its pullback to the zero set $\mu^{-1}(0)$ coincides with the restriction of the original form $\omega$ to the zero-level set $\mu^{-1}(0)$.

However, more often than not, the requirement that $0$ is a regular value is too strong. Even simple Hamiltonian systems can exhibit a singularity. In that case the resulting reduced space is typically not a smooth manifold. Various approaches have been developed to deal with such singularities. The reduced space has been studied as a stratified symplectic space \cite{SL}, as a differential space \cite{SNIATYCKI}, or via algebraic/Poisson methods \cite{SW,AGJ}.

The purpose of this paper is to give a \emph{derived} model for symplectic reduction that works for an arbitrary moment map, without imposing regularity assumptions. This approach is inspired by recent developments in derived differential geometry \cite{CARCHEDI, BLX} (see section \ref{sec:overview} for an overview) and similar results in derived algebraic geometry \cite{PECHARICH}. While other methods dealing with singularities often rely on additional assumptions, such as properness of the group action or specific properties of the moment map, the derived approach does not require any such conditions. This makes it useful conceptually, as it can be applied in a general setting. However, particular applications remain to be explored. 

The central idea of the derived approach is to replace the singular quotient $\mu^{-1}(0)/G$ with a differential geometric derived object that retains the full geometric information of the reduction. In this article we model the zero-level set $\mu^{-1}(0)$ as a dg-manifold $\cZ$ constructed as a derived vanishing locus of the moment map $\mu$, following the approach to homotopy intersections of Carchedi and Behrend-Liao-Xu \cite{CARCHEDI, BLX}. This model encodes the singularities of the zero set $\mu^{-1}(0)$ in the cochain complex associated to the dg-manifold $\cZ$. The group action of the Lie group $G$ on the dg-manifold $\cZ$ is encoded in an action groupoid $\cZ/G:= \{G\times \cZ \rightrightarrows \cZ\}$, whose Bott--Shulman complex (which is a triple complex in the derived case) plays the role of differential forms on the (stacky) quotient. We then construct a closed form $\omega_{\mathrm{red}}$ of total degree $2$ in this complex which pulls back to the restricted form $\iota_\cZ^* \omega$ under the projection $\pi \colon \cZ \to \cZ/G$, i.e. $\pi^* \omega_{\mathrm{red}} = \iota^*_\cZ \omega$ (Propositions \ref{prop:closed} and \ref{prop:red}), and show that it induces an isomorphism between the tangent and cotangent complexes of $\cZ/G$, giving the derived analogue of non-degeneracy (Theorem \ref{thm:nondeg}).

A result similar to ours was announced at the Workshop on Hamiltonian Geometry and Quantization in 2024 by Sjamaar in collaboration with Cueca, Dorsch and Zhu \cite{TORONTO}. Their strategy involves modelling the symplectic quotient as a derived Lagrangian intersection of stacks $[M/G]\times_{[\fg^*/G]} [\{0\}/G]$. Their reduced form agrees with the form $\omega_{\mathrm{red}}$ of this paper.

\subsection*{Organization of the paper}
The paper is organized as follows. In Sections \ref{sec:overview} and \ref{sec:dgmanifolds}, we provide a brief overview of derived differential geometry and the theory of dg-manifolds. In Section \ref{sec:kahler} we describe tangent and cotangent modules of dg-manifolds. In Sections \ref{sec:actiongroupoid} and \ref{sec:dgalg} we construct the dg-action groupoid and the corresponding dg-Lie algebroid. In Section \ref{sec:bottshulman} we build the Bott--Shulman complex, introduce the reduced 2-form and show that it is closed. Finally, in Section \ref{sec:symplectic} we prove that the reduced form induces a non-degenerate pairing between the tangent and cotangent complexes of the reduced space. The paper concludes with a notation index and three appendices, covering, respectively, the de~Rham complex of a dg-manifold, fiber products of dg-manifolds, and a dg-analogue of the Serre--Swan theorem.

\subsection*{Acknowledgments}

I would like to thank my advisor, Eugene Lerman, for his patient guidance throughout this project. I would also like to thank Miquel Cueca, Wilmer Smilde and \v{Z}an Grad for useful discussions. 

\section{Overview of Derived Differential Geometry} \label{sec:overview}

Derived differential geometry is a relatively new field compared to its algebraic counterpart. It aims to enlarge the category of manifolds, which lacks arbitrary fiber products and quotients. In particular, if we have two maps $f\colon M \to L$ and $g \colon N \to L$ between smooth manifolds, the fiber product $M \times_L N$ is guaranteed to be a smooth manifold only if the maps $f$ and $g$ are transverse, i.e. if for any points $x \in M$ and $y \in N$ such that  $f(x) = g(y)$, we have \[df_x(T_xM) + dg_y(T_yN) = T_{f(x)} L. \] However, in general when the maps $f$ and $g$ are not transverse, their fiber product can be an arbitrarily singular topological space. For example, by a theorem of Whitney, any closed subset of a manifold is a zero-level set of some smooth map $f\colon M \to \R$, and therefore can be realised as the topological fiber product $M\times_{\R} \{0\}$. 

On the other hand, the ($\infty$-)category of derived manifolds has arbitrary \emph{homotopy} fiber products. There are several approaches to constructing such a category. The foundational work is a paper of Spivak \cite{SPIVAK}. There, he defined derived manifolds as homotopy sheaves of homotopical $\cin$-rings. Borisov and Noel \cite{BORISOVNOEL} simplified Spivak's model by using simplicial $\cin$-rings. Carchedi and Roytenberg \cite{CARCHEDIROYTENBERG, CARCHEDI} proposed another model based on dg-$\cin$-rings. A more geometric approach to derived geometry was developed by Behrend, Liao and Xu \cite{BLX}, who defined derived manifolds as bundles of curved $L_\infty[1]$-algebras. Recently, Taroyan \cite{TAROYAN} showed that all these models are (Quillen) equivalent.

Another approach to derived manifolds, and the one we will be using in this paper, is based on dg-manifolds of negative amplitude. Dg-manifolds originally appeared as a geometric model for the Batalin--Vilkovisky formalism  (see \cite{SCHWARZ}, \cite{AKSZ}). They are smooth manifolds equipped with an extended algebra of functions, possibly containing elements of non-zero degrees, together with a cohomological vector field of degree $1$. The category of dg-manifolds of negative amplitude is equivalent to the category of curved $L_\infty[1]$-bundles of Behrend, Liao and Xu \cite[Proposition 2.13]{BLX}, and therefore can serve as a model for derived geometry. A classical example of a dg-manifold is the derived vanishing locus of a section $s$ of a vector bundle $E \to M$, which is given by the dg-manifold $E[-1]$ with a sheaf of sections of the graded vector bundle $\wedge^{-\bullet} E^*$ concetrated in negative degrees and a cohomological vector field $\iota_s$ given by contraction with the section $s$ (see Construction~\ref{ctr:zerosection}). All the information about the singularities of the zero-level set $s^{-1}(0)$ is contained in the cohomology of the cochain complex $(\Gamma(\wedge^{-\bullet} E^*), \iota_s)$. In particular, when the section $s$ intersects the zero section transversely, the cohomology is concentrated in degree 0 and is isomorphic to smooth functions on the algebra of smooth functions $C^{\infty}(s^{-1}(0))$ on the zero set.

In general, homotopy fiber products of dg-manifolds are defined as a homotopy pullback. Behrend, Liao and Xu \cite{BLX} showed that the category of dg-manifolds has a structure of a \emph{category of fibrant objects}. This is a weaker notion than that of a model category, as it has only two classes of distinguished morphisms: fibrations and weak equivalences. However, this structure is enough to form homotopy fiber products. In particular for any dg-manifold $\cX$ there is a path object $\mathrm{Path}(\cX)$ which factors the diagonal map $\Delta \colon \cX \to \cX \times \cX$: \[\cX \xrightarrow{w} \mathrm{Path}(\cX) \xrightarrow{F_\Delta} \cX\times \cX\] so that $w$ is a weak equivalence, $F_\Delta$ is a fibration, and $F_\Delta \circ w = \Delta$. 

Now, suppose we have two maps of dg-manifolds $f\colon \cX \to \cL$ and $g \colon \cY \to \cL$. The ordinary fiber product of the dg-manifolds $\cX$ and $\cY$, if it exists, can be formed using a pullback diagram \[\begin{tikzcd}
        \cX\times_\cL \cY \arrow[r] \arrow[d] \arrow[dr, phantom, "\usebox\pullback" , very near start, color=black] & \cL  \arrow[d, "{\Delta}"] \\
        \cX \times \cY \arrow[r, "{(f,g)}"] & \cL \times \cL. 
        \end{tikzcd}\]
The problem is that the diagonal map $\Delta$ is not a fibration, and therefore the existence of the pullback is not guaranteed. However, replacing the base dg-manifold $\cL$ by a weakly equivalent path object $\mathrm{Path}(\cL)$, and the diagonal $\Delta$ by a fibration $F_\Delta \colon \mathrm{Path}(\cL) \to \cL\times \cL$, we obtain the homotopy fiber product of dg-manifolds $\cX$ and $\cY$ over $\cL$ as a pullback \[\begin{tikzcd}
        \cX\times_\cL^h \cY \arrow[r] \arrow[d] \arrow[dr, phantom, "\usebox\pullback" , very near start, color=black] & \mathrm{Path}(\cL) \arrow[d, "{F_\Delta}", twoheadrightarrow]  \\
        \cX \times \cY \arrow[r, "{(f,g)}"] & \cL \times \cL.
        \end{tikzcd}\]
Such homotopy fiber products are unique up to weak equivalence.

Another source of singularities, which we have to consider when working with symplectic reduction, is quotients by group actions. The category of manifolds is not closed under taking quotients by group actions. In particular, if a Lie group $G$ acts on a manifold $M$, the quotient $M/G$ is guaranteed to be a smooth manifold only if the action is free and proper. This problem can be resolved by considering the quotient as a differentiable stack $[M/G]$. Every differentiable stack $\mathfrak{X}$ can be presented by some groupoid $\cG = \{\Gamma_1 \rightrightarrows \Gamma_0\}$, which plays the role of an atlas for $\mathfrak{X} \cong [\Gamma_0/\Gamma_1]$. Any two Morita equivalent groupoids present equivalent stacks (see \cite{BEHRENDXU}). In particular, if a Lie group $G$ acts on a manifold $M$, the action groupoid $\{G\times M \rightrightarrows M\}$ presents the quotient stack $[M/G]$. The search for a category which is closed under arbitrary homotopy quotients motivates the study of higher Lie groupoids and higher differentiable stacks (see for example \cite{HIGHERSTACKS}).

Combining derived manifolds with higher differentiable stacks leads to the category of \emph{derived stacks} which is closed under arbitrary homotopy limits and homotopy colimits. In differential geometry the construction of derived stacks has been carried out formally, using techniques of Toën and Vezzosi (see Taroyan \cite{TAROYAN}, Alfonsi and Young \cite{ALFONSIYOUNG}). In practice, it is often more convenient to work with a specific model for the given derived stack. In the context of the symplectic quotient $\mu^{-1}(0)/G$, it will be convenient to model the derived zero-level set $\mu^{-1}(0)$ as a dg-manifold $\cZ$, and the quotient as an action groupoid $\cZ/G = \{G \times \cZ \rightrightarrows \cZ\}$.

\section{Dg-manifolds} \label{sec:dgmanifolds}

In the literature there are different definitions of dg-manifolds with varying levels of rigour. Usually dg-manifolds are defined to be concentrated purely in negative or purely positive degrees. In this case, dg-manifolds of negative amplitude model derived intersections, while dg-manifolds of positive amplitude model infinitesimal quotients. However, in practice it is sometimes useful to consider dg-manifolds of mixed amplitude. For example, even if one considers a dg-manifold of negative amplitude, its cotangent bundle will have functions of positive degrees. Introducing mixed degrees comes with additional technical problems as now we have to deal with power series of variables instead of polynomials. Kotov and Salnikov \cite{KTSL} gave a rigorous definition of $\Z$-graded dg-manifolds with unbounded amplitude using filtrations of graded modules. However, since we are working with bounded amplitudes, for our purposes it will be enough to use the definition from Vysoky's monograph \cite{VYSOKY} on graded geometry. We refer the reader to Vysoky's work for definitions and facts about graded geometry. 

\begin{remark}
    All $\Z$-graded objects (like graded vector spaces, graded algebras or graded modules) introduced in this paper are defined as collections of objects indexed by integers. For example, a graded vector space $V$ is a collection $\{V_i\}_{i\in \Z}$ of vector spaces. This notation is consistent with the one used by Vysoky in \cite{VYSOKY}. 

    In this notation a graded-commutative algebra $A$ is a collection $\{A_i\}_{i\in \Z}$ of vector spaces, together with a multiplication map $\mu = \{\mu_{ij}\}_{i,j \in \Z}$, where $\mu_{ij} \colon A_i \otimes A_j \to A_{i+j}$ are bilinear maps. The multiplication $\mu$ is required to satisfy the usual graded-commutativity and associativity properties.

    Note that with this notation, \emph{all elements of a graded vector space or a graded algebra are homogeneous}, i.e. they have a well-defined degree. In particular, if $V = \{V_i\}_{i\in \Z}$ is a graded vector space, then we write $v\in V$ if there is some index $i \in \Z$ such that $v \in V_i$. We also write $\lvert v \rvert = i$ to denote the degree of an element $v$.
\end{remark}

Let $V = \{V_i\}_{i\in \Z}$ be a $\Z$-graded vector space. The symmetric algebra $\mathbf{Symm}_{\R}(V)$ is the free graded-commutative algebra generated by the space $V$. It is given by the quotient of the tensor algebra $T_{\R}(V)$ by the ideal generated by elements of the form $v\otimes w - (-1)^{\lvert v \rvert \cdot \lvert w \rvert} w \otimes v$ for any elements $v,w \in V$. The degree $n$ symmetric power $\mathbf{Symm}^n_{\R}(V)$ is the subspace of $\mathbf{Symm}_{\R}(V)$ spanned by monomials with $n$ factors. The algebra $\mathbf{Symm}_{\R}(V)$ is graded by the sum of the degree of the factors.

Let $A$ be a graded-commutative algebra (over $\R$). We define \[\overline{\mathbf{Symm}}(V, A) = \prod_{n\geq 0}  A \otimes_{\R} \mathbf{Symm}^n_{\R} V\] --- the extended symmetric algebra of formal power series in elements of the space $V$ with values in the algebra $A$ (see discussion after \cite[Proposition 1.21]{VYSOKY}). The product is taken in the category of graded vector spaces.

\begin{remark}[see {\cite[Remark 1.23]{VYSOKY}}]
    \label{rmk:polvsseries}
    In some cases, the extended symmetric algebra $\overline{\mathbf{Symm}}(V, A)$ coincides with the simpler symmetric algebra of polynomials with values in the algebra $A$ \[\mathbf{Symm}(V, A) = \bigoplus_{n\geq 0}  A \otimes_{\R} \mathbf{Symm}^n_{\R} V.\] For example, if the space of coefficients $A$ is concentrated in degree 0 and the graded vector space $V$ is concentrated in either purely positive or purely negative degrees, then power series and polynomials coincide: $\overline{\mathbf{Symm}}(V, A) = \mathbf{Symm}(V, A)$. The introduction of the extended symmetric algebra is necessary to deal with cases in which the space $V$ has mixed degrees.
\end{remark}

\begin{notation} Let $(n_i)_{i\in \Z}$ be a sequence of non-negative integers with finitely many non-zero elements. We define a graded vector space \[\R^{(n_i)} = \{ \R^{n_i}\}_{i \in \Z}.\] We also write \[\R^{(n_i)}_* = \{ \R^{n_i} \}_{i \in \Z \setminus \{0\}}\] for the space $\R^{(n_i)}$ with zero component removed.
\end{notation}

In the next definition, we introduce a local model for dg-manifolds.

\begin{definition} \label{def:graded_domain}
    Consider a presheaf of power series with coefficients in smooth functions \[\cin_{(n_i)}(U) = \overline{\mathbf{Symm}}(\R^{(n_i)}_*, C^{\infty}(U))\] on open subsets $U \subset \R^{n_0}$. In \cite[Proposition 3.1]{VYSOKY} it is shown that this presheaf is in fact a sheaf. For any open $U \subset \R^{n_0}$ we call the pair $(U, \cin_{(n_i)}\vert_U)$ a \emph{graded domain}.
\end{definition}

\begin{definition}
    \label{def:dgmanifold}
   A \emph{dg-manifold of bounded amplitude} $\cX$ is a triple $(\cX^0, \mathcal{O}_{\cX}, \delta)$, where $\cX^0$ is a real differentiable manifold and $\mathcal{O}_{\cX}$ is a sheaf of graded-commutative algebras over $\cX^0$ and $\delta \colon \mathcal{O}_{\cX} \to \mathcal{O}_{\cX}$ is an $\R$-linear map of sheaves satisfying the following conditions:
\begin{itemize}
    \item Locally the sheaf $\mathcal{O}_{\cX}$ is isomorphic to a graded domain \[(U, \mathcal{O}_\cX \vert_U) \cong (V, \cin_{(n_i)}\vert_V)\] (as in Definition \ref{def:graded_domain}) for some sequence of non-negative integers with at most finitely many non-zero elements $(n_i)_{i\in \Z}$ and open subset $V \subset \R^{n_0}$. We require that the sequence $(n_i)_{i\in \Z}$ is the same for all local charts of $\cX$.
    \item The map $\delta$ is a derivation of degree 1, i.e. for any open subset $U \subset \cX^0$ and any homogeneous elements $f,g \in \mathcal{O}_{\cX}(U)$ we have \[\delta\vert_U(f\cdot g) = \delta\vert_U(f)\cdot g + (-1)^{\lvert f \rvert} f \cdot \delta\vert_U(g).\]
    \item $\delta$ is a cohomological vector field, i.e. \[\delta^2 = 0.\]
\end{itemize} 

   A  \emph{morphism of dg-manifolds} $f\colon \cX \to \cY$ is a pair $(\underline{f}, f^*)$, where $\underline{f}\colon \cX^0 \to \cY^0$ is a smooth map and $f^*\colon \mathcal{O}_{\cY} \to \underline{f}_* \mathcal{O}_{\cX}$ is a morphism of sheaves of dg-algebras over $\cY^0$, i.e. for any open subset $U \subset \cY^0$ there is a morphism of graded-commutative algebras \[f^*\vert_U \colon \mathcal{O}_{\cY}(U) \to  \mathcal{O}_{\cX}(\underline{f}^{-1}(U))\] such that $f^*\vert_U \circ \delta_{\cY}\vert_U = \delta_{\cX}\vert_{\underline{f}^{-1}(U)} \circ f^*\vert_U$.
\end{definition}

\begin{notation}
    \label{not:globalfunctions}
Given a dg-manifold $\cX = (\cX^0, \mathcal{O}_{\cX}, \delta)$ of bounded amplitude, we will denote the dg-algebra of global sections $\mathcal{O}_{\cX}(\cX^0)$ by $C^{\infty}(\cX)$.
\end{notation}

The following construction will be extensively used throughout the paper. In particular, we will use it to model the derived zero-level set of the moment map. 

\begin{construction} \label{ctr:zerosection}
    Let $E \to M$ be a vector bundle and let $s \in \Gamma(E)$ be a section. Then there is a dg-manifold $E[-1]$ with the sheaf $\cO_{E[-1]}$ of sections of  \begin{equation} \label{eq:symmvb} \mathbf{Symm}_\R((E[-1])^*) \cong \wedge^{-\bullet} E^*,\end{equation} i.e. in degree $-k$ the sheaf $\cO_{E[-1]}$ consists of sections of $\wedge^k E^*$. Notice that when taking a symmetric power of a graded vector bundle, generators of odd degree anti-commute, so since the bundle $E[-1]$ is concentrated in degree $+1$\footnote{Here we use the convention that $E[k]_i = E_{i+k}$.} and the dual bundle $(E[-1])^* \cong E^*[1]$ is concentrated in degree $-1$, one recovers the exterior algebra $\wedge^{-\bullet} E^*$ in~\eqref{eq:symmvb}. Also, since the bundle $(E[-1])^*$ is concentrated in a single degree, it is enough to use the ordinary symmetric algebra instead of power series (see Remark \ref{rmk:polvsseries}). The cohomological vector field $\delta = \iota_s$ is the contraction with $s$, i.e. for any open $U \subset M$, any smooth function $f \in \cin(U)$ and any sections $\alpha_1, \ldots, \alpha_k$ in $\Gamma(E^*\vert_U)$ we have \[\iota_s(f\cdot \alpha_1 \wedge \ldots \wedge \alpha_k) = \sum_{i=1}^k (-1)^{i-1} f\langle s, \alpha_i \rangle\cdot\alpha_1 \wedge \ldots \wedge \hat{\alpha}_i\wedge\ldots \wedge\alpha_{k}.\]
    
    Following Behrend, Liao and Xu \cite{BLX}, we will call such dg-manifolds \emph{quasi-smooth}. Quasi-smooth dg-manifolds appear as homotopy fiber products of smooth manifolds taken using the structure of a category of fibrant objects on dg-manifolds (see section \ref{sec:overview}). In particular, since arbitrary homotopy fiber products exist in the category of dg-manifolds, we can define the homotopy fiber product $M \times_{s,E,0}^h M$ of the section $s$ with the zero section $0$. Then the dg-manifold $E[-1]$ is a model for this homotopy fiber product, or, more precisely, there is a morphism $w  \colon E[-1] \to M \times_{s,E,0}^h M$, which is a weak equivalence, i.e. the pullback $w^*$ induces a quasi-isomorphism between the corresponding sheaves of cochain complexes (see \cite[Proposition 3.30]{BLX} for the proof). 

\end{construction}

\section{Differential forms and tangent complex} \label{sec:kahler}

In order to formulate symplectic reduction in the language of dg-manifolds, we will require the appropriate notions of differential forms and vector fields. When generalizing notions from smooth manifolds to dg-manifolds, one should use point-free definitions from the smooth world, since dg-manifolds are not sets of points, but rather ringed spaces. Therefore, we construct one-forms on dg-manifolds through a universal property, which is a direct generalization of the universal property satisfied by the module of one-forms on an ordinary manifold (see Remark \ref{rmk:manifold_kahler}). Then, we construct the de Rham complex as the exterior algebra of the module of one-forms. This approach works for arbitrary $\Z$-graded dg-manifolds of bounded amplitude. In Proposition \ref{prop:kahler_ex} we show that our definition of the module of one-forms is equivalent to the definition given by Pridham \cite{PRIDHAM}, who defines one-forms in terms of generators and relations.

\begin{remark}[Modules vs.\ sheaves]
\label{rmk:modules_vs_sheaves}
    Throughout this paper, we work at the level of dg-modules of global sections over the dg-algebra of functions $C^\infty(\cX)$ on the dg-manifold $\cX$, rather than at the level of sheaves of dg-modules over the function sheaf $\cO_\cX$. This choice is made for expository convenience, as it allows us to formulate constructions purely algebraically without carrying sheaf-theoretic data through every proof. In Appendix~\ref{sec:serreswan} we prove the dg-analogue of the Serre--Swan theorem, which states that global section functor induces an equivalence between the category of locally free sheaves of dg-modules of finite rank and the category of projective dg-modules of finite type. Therefore, one can always recover the underlying sheaf of dg-modules, once the module of global sections is shown to be projective and of finite type. 
\end{remark}

\begin{remark} \label{rmk:manifold_kahler}
For an ordinary manifold $M$, the module of de Rham one-forms $\Omega^1(M)$ satisfies the following universal property: for any $C^{\infty}(M)$-module $\mathcal{M}$, and any $\cin$-derivation\footnote{A $\cin$-derivation is a degree 0 case of Definition \ref{def:derivation}.} $d' \colon C^{\infty}(M) \to \mathcal{M}$, there is a unique map $\phi \colon \Omega^1(M) \to \mathcal{M}$ of $C^{\infty}(M)$-modules, making the following diagram commute:
\begin{center}
\begin{tikzcd}C^{\infty}(M) \arrow[r, "d'"] \arrow[rd, "d"] & \mathcal{M}                                                   \\
                                                  & \Omega^1(M) \arrow[u, "\phi"', dotted]
\end{tikzcd}
\end{center}
where $d$ is the usual exterior differential. In the language of $\cin$-algebraic geometry this says that the module $\Omega^1(M)$ is the \emph{module of K\"ahler differentials} of the $\cin$-ring $C^{\infty}(M)$ (for an overview of $\cin$-algebraic geometry see \cite[Chapter 2]{JOYCE}). The full proof of this claim can be found in \cite[Theorem 7.7]{JOYCE}. 
\end{remark}

We define the one-forms on dg-manifolds through a universal property similar to the one of $\cin(M) \xrightarrow{d} \Omega^1(M)$ in Remark \ref{rmk:manifold_kahler}.  

\begin{definition} \label{def:dgmodule}
    A \emph{(left) dg-$C^{\infty}(\cX)$-module} $(\mathcal{M}, \delta_{\mathcal{M}})$ is a graded $\cin(\cX)$-module $\mathcal{M} = \{\mathcal{M}_i\}_{i\in \Z}$ together with a degree 1 map $\delta_{\mathcal{M}} \colon \mathcal{M} \to \mathcal{M}$ satisfying $\delta_{\mathcal{M}}^2 = 0$ and a degree 0 multiplication $C^{\infty}(\cX) \times \mathcal{M} \to \mathcal{M}$ satisfying the usual module axioms and the compatibility condition \[\delta_{\mathcal{M}}(f\cdot m) = \delta_\cX(f)\cdot m + (-1)^{\lvert f \rvert} f\cdot \delta_{\mathcal{M}}(m)\] for any function $f \in C^{\infty}(\cX)$ and any element $m \in \mathcal{M}$.
\end{definition}

\begin{definition} \label{def:shift}
    Given a graded $C^{\infty}(\cX)$-module $\mathcal{M}$, we define a \emph{$k$-shifted $\cin(\cX)$-module} $\mathcal{M}[k]$ by \[\mathcal{M}[k]_i = \mathcal{M}_{i+k}\] for any integer $i \in \Z$. The $\cin(\cX)$-module structure on $\mathcal{M}[k]$ is defined by the formula \[ f \cdot_{\mathcal{M}[k]} m = (-1)^{k \cdot \lvert f \rvert} f \cdot_{\mathcal{M}} m\] for any element $f \in C^{\infty}(\cX)$ and any $m \in \mathcal{M}[k]$.

    \emph{A degree $n$ map of graded $C^{\infty}(\cX)$-modules} $\phi \colon \mathcal{M} \to \mathcal{N}$ is a degree 0 map of graded $C^{\infty}(\cX)$-modules $\phi \colon \mathcal{M} \to \mathcal{N}[n]$. Equivalently, it is a $\R$-linear map satisfying $\phi(\mathcal{M}_i) \subset \mathcal{N}_{i+n}$ and $\phi(f \cdot m) = (-1)^{n \cdot \lvert f \rvert} f \cdot \phi(m)$ for any  $f \in C^{\infty}(\cX)$ and any $m \in \mathcal{M}$.

    In case $\mathcal{M}$ is a dg-$C^{\infty}(\cX)$-module with inner differential $\delta_{\mathcal{M}}$, we define the inner differential on $\mathcal{M}[k]$ by the formula \[\delta_{\mathcal{M}[k]} = (-1)^k \delta_{\mathcal{M}}.\] 
\end{definition}

\begin{definition}
    \label{def:derivation}
    A \emph{degree $n$ $\cin$-derivation} of a dg-algebra $C^{\infty}(\cX)$ with values in a graded $\cin(\cX)$-module $\mathcal{M}$ is a degree $n$ map $d' \colon C^{\infty}(\cX) \to \mathcal{M}$ of graded vector spaces, satisfying the following identities:
    \begin{enumerate}
        \item  The equation $d'(fg) = d'(f) g + (-1)^{n \cdot \lvert f \rvert} f d'(g)$ is satisfied for any elements $f,g \in C^{\infty}(\cX)$.
        \item For any degree 0 elements $a_1, \ldots, a_k \in C^{\infty}(\cX)_0$ and $f \in \cin(\R^k)$ we have \[d'(f(a_1, \ldots, a_k)) = \sum_{i=1}^k \frac{\partial f}{\partial x_i}(a_1, \ldots, a_k) d'(a_i).\]
    \end{enumerate}
\end{definition}

\begin{definition} \label{def:kahler}
    Given a dg-manifold $\cX$, the \emph{module of K\"ahler differentials} of $\cX$ is a pair $\left((\Omega^1_{C^{\infty}(\cX)}, \delta^1_\cX),d\right)$, where $(\Omega^1_{C^{\infty}(\cX)}, \delta^1_\cX)$ is a $C^{\infty}(\cX)$-dg-module and $d\colon C^{\infty}(\cX) \to \Omega^1_{C^{\infty}(\cX)}$ is an $\R$-linear map such that:
\begin{itemize}
    \item $d$ is compatible with inner differentials, i.e. \[d \circ \delta_\cX = \delta^1_\cX \circ d.\]
    
    \item $d$ is a degree 0 $\cin$-derivation of $\cin(\cX)$ with values in $\Omega^1_{C^{\infty}(\cX)}$ (see Definition \ref{def:derivation}).
    
    \item The pair $(\Omega^1_{C^{\infty}(\cX)}, d)$ satisfies the following graded K\"ahler differentials universal property: if $\mathcal{M}$ is a graded $C^{\infty}(\cX)$-module, and $d' \colon C^{\infty}(\cX) \to \mathcal{M}$ is a $\cin$-derivation of degree $n$, then there is a unique map of graded $\cin(\cX)$-modules $\phi \colon \Omega^1_{C^{\infty}(\cX)} \to \mathcal{M}$ of degree $n$ (see Definition \ref{def:shift}), making the following diagram (of graded $\cin(\cX)$-modules) commute:

    \begin{center}
\begin{tikzcd}
C^{\infty}(\cX) \arrow[r, "d'"] \arrow[rd, "d"] & \mathcal{M}                                                   \\
                                                  & \Omega^1_{C^{\infty}(\cX)} \arrow[u, "\phi"', dotted]
\end{tikzcd}
\end{center}
\end{itemize}
\end{definition}

In Proposition \ref{prop:kahler_ex}  we show that the module of K\"ahler differentials exists for any dg-manifold.

Next, we construct the dg-module of vector fields on a dg-manifold $\cX$.

\begin{notation}
    \label{not:gradedhom}
    For any two dg-$\cin(\cX)$-modules $\mathcal{M}$ and $\mathcal{N}$, we denote by $\mathrm{Hom}^{\mathrm{gr}}_{C^{\infty}(\cX)}(\mathcal{M}, \mathcal{N})$ the $\R$-vector space of degree 0 morphisms of graded $C^{\infty}(\cX)$-modules from $\mathcal{M}$ to $\mathcal{N}$: \[\mathrm{Hom}^{\mathrm{gr}}_{C^{\infty}(\cX)}(\mathcal{M}, \mathcal{N}) = \{ f \colon \mathcal{M} \to \mathcal{N} \mid f \text{ is } \cin(\cX)\text{-linear and } f(\mathcal{M}_k) \subseteq \mathcal{N}_k \text{ for all } k \in \Z \}.\] Notice that elements of the space $\mathrm{Hom}^{\mathrm{gr}}_{C^{\infty}(\cX)}(\mathcal{M}, \mathcal{N})$ are not necessarily compatible with inner differentials, i.e. they are not necessarily morphisms of dg-$\cin(\cX)$-modules.
\end{notation}

\begin{definition}
     We define an \emph{internal Hom dg-$\cin(\cX)$-module} $\underline{\mathrm{Hom}}_{C^{\infty}(\cX)} (\mathcal{M}, \mathcal{N})$ as follows:
     
     \begin{itemize}
        \item \emph{As a graded module} \[\underline{\mathrm{Hom}}_{C^{\infty}(\cX)} (\mathcal{M}, \mathcal{N})_n = \mathrm{Hom}^{\mathrm{gr}}_{\cin(\cX)}(\mathcal{M}, \mathcal{N}[n]).\] See Definition \ref{def:shift} for shift conventions.
        \item \emph{The inner differential} is defined by the formula $\delta_{\underline{\mathrm{Hom}}}(f) = \delta_{\mathcal{N}} \circ f - (-1)^{\lvert f \rvert} f \circ \delta_{\mathcal{M}}$ for any $f \in \underline{\mathrm{Hom}}_{C^{\infty}(\cX)} (\mathcal{M}, \mathcal{N})$. 
     \end{itemize}

     This is the standard construction of an internal Hom in homological algebra (see for example \cite[Section 2.7.5]{WEIBEL}).
\end{definition}

\begin{definition}
    \label{def:tangentmodule}
We define the \emph{tangent module} $T_{C^{\infty}(\cX)}$ of a dg-manifold $\cX$ to be the internal Hom module \[T_{C^{\infty}(\cX)} :=\underline{\mathrm{Hom}}_{C^{\infty}(\cX)} (\Omega^1_{C^{\infty}(\cX)}, C^{\infty}(\cX)).\] 

\end{definition}

\begin{remark}
    We identify the tangent module $T_{C^{\infty}(\cX)}$ of $\cX$ with the module $\mathrm{Der}(C^{\infty}(\cX))$ of all $\cin$-derivations of $C^{\infty}(\cX)$ with values in $C^{\infty}(\cX)$ graded by their degree and with the inner differential defined by the graded commutator \[\mathcal{L}_{\delta_\cX}(D) = [\delta_\cX, D]\] for any derivation $D \in \mathrm{Der}(C^{\infty}(\cX))$. This identification comes from the map \[d^*\colon  T_{C^{\infty}(\cX)} \to \mathrm{Der}(C^{\infty}(\cX)), \quad d^*(\phi) = \phi\circ d.\] This map is bijective due to the universal property of K\"ahler differentials. Furthermore, this correspondence is compatible with inner differentials $\delta_{\underline{\mathrm{Hom}}}$ on $T_{C^{\infty}(\cX)}$ and $\cL_{\delta_\cX}$ on $\mathrm{Der}(C^{\infty}(\cX))$. Indeed, for any element $\phi \in T_{C^{\infty}(\cX)}$ we have
    \[
       d^*(\delta_{\underline{\mathrm{Hom}}}\phi) = (\delta_\cX \circ \phi - (-1)^{|\phi|} \phi \circ \delta^1_\cX) \circ d = \delta_\cX \circ d^*(\phi) - (-1)^{|\phi|} d^*(\phi) \circ \delta_\cX = [\delta_\cX, d^*(\phi)].
   \]
\end{remark}

\begin{notation} \label{not:tangent_cotangent}
In what follows we will often write $T_{\cX}$ instead of $T_{C^{\infty}(\cX)}$ and $T^*_{\cX}$ instead of $\Omega^1_{C^{\infty}(\cX)}$ and refer to them as the \emph{tangent and cotangent modules} of the dg-manifold $\cX$.
\end{notation}

\begin{definition}
    \label{def:dgformsmodule}
The \emph{dg-$\cin(\cX)$-module of $p$-forms} on a dg-manifold $\cX$ is defined as $\Omega^p_{C^{\infty}(\cX)} = \bigwedge^p_{\cin(\cX)} \Omega^1_{C^{\infty}(\cX)}$ with the de Rham differential $d \colon \Omega^p_{C^{\infty}(\cX)} \to \Omega^{p+1}_{C^{\infty}(\cX)}$. The details of the construction are presented in Theorem \ref{thm:derham_diff}. The \emph{de Rham complex} of $\cX$ is defined as the collection of dg-$\cin(\cX)$-modules $\Omega^\bullet_{C^{\infty}(\cX)} = \{\Omega^p_{C^{\infty}(\cX)}\}_{p\geq 0}$ together with the de Rham differential $d$. The de Rham complex is a double complex of $\cin(\cX)$-modules, where the first grading is given by form degree $p$ and the second grading is given by the internal grading of the dg-module $\Omega^p_{C^{\infty}(\cX)}$. Furthermore, the assignment $\cX \mapsto (\Omega^\bullet_{C^{\infty}(\cX)}, d)$ is contravariantly functorial, i.e. for any morphism of dg-manifolds $f\colon \cX \to \cY$ there is an induced morphism of double complexes \[f^* \colon (\Omega^\bullet_{C^{\infty}(\cY)}, d) \to (\Omega^\bullet_{C^{\infty}(\cX)}, d).\] See Proposition \ref{prop:pullback_derham} for the proof.
\end{definition}

We now turn to particular examples and compute tangent and cotangent modules for a class of quasi-smooth dg-manifolds, that correspond to sections of trivial vector bundles. This will be the main class of examples we will be working with in the paper. Let us first fix some notation.

\begin{notation}
    \label{not:pullback}
For any dg-manifold $\cX$ with the underlying sheaf of functions $\cO_\cX$ concentrated in non-positive degrees, there is a natural dg-map $\iota_\cX \colon \cX \to \cX^0$ given by the identity on the underlying manifold and the inclusion of sheaves $\iota_\cX^* \colon \mathcal{O}_{\cX^0} \to \mathcal{O}_\cX$ in degree 0. 
In what follows for any module $\mathcal{M}$ over the algebra of smooth functions $\cin(\cX^0)$ we denote by \[\iota_\cX^* \mathcal{M} = C^{\infty}(\cX) \otimes_{C^{\infty}(\cX^0)} \mathcal{M}\] the pullback dg-module over the dg-algebra $\cin(\cX)$ of functions on the dg-manifold $\cX$. The inner differential on $\iota_\cX^* \mathcal{M}$ is inherited from the dg-algebra $\cin(\cX)$: \[\delta(m\otimes f) = m \otimes \delta_\cX(f)\] for any elements $m \in \mathcal{M}$ and $f \in \cin(\cX)$.

Furthermore, given a graded vector space $V$, we denote \[V_\cX =  \cin(\cX) \otimes_{\R} V\] to be the module of sections of the trivial vector bundle over the dg-manifold $\cX$ with fiber $V$. The module $V_\cX$ has a dg-structure with the differential defined by $\delta(f\otimes v) = \delta_\cX(f)\otimes v$ for any function $f \in C^{\infty}(\cX)$ and element $v \in V$, making it into a dg-$\cin(\cX)$-module.
\end{notation}

\begin{definition} \label{def:cone_cocone}
    Given a morphism of dg-$\cin(\cX)$-modules $f\colon (\mathcal{M}, \delta_{\mathcal{M}}) \to (\mathcal{N}, \delta_{\mathcal{N}})$, we define the \emph{cone of the dg-morphism} $f$ as the dg-$\cin(\cX)$-module $\cone(f) = (\mathcal{M}[1] \oplus \mathcal{N}, \delta)$ with the inner differential defined by \[ \delta = \begin{pmatrix} -\delta_{\mathcal{M}} & 0 \\ f & \delta_{\mathcal{N}} \end{pmatrix}. \]
    
    Dually, we define the \emph{cocone of a dg-morphism} $f$ as a dg-$\cin(\cX)$-module $\cocone(f) = (\mathcal{M} \oplus \mathcal{N}[-1], \delta)$ with the inner differential defined by \[ \delta = \begin{pmatrix} \delta_{\mathcal{M}} & 0 \\ f & -\delta_{\mathcal{N}} \end{pmatrix}.\] 
\end{definition}

The following explicit computation of tangent and cotangent modules is a standard result in derived geometry (see for example \cite[Proposition 2.6]{SEOL} for a more general statement). However, the proof via the universal property of K\"ahler differentials is, to the best of our knowledge, new.

\begin{proposition}
  \label{prop:kahler}
    Let $V$ be a finite dimensional vector space, and $\cX$ be a quasi-smooth dg-manifold constructed from a trivial bundle $M\times V$ over some manifold $M$ and a section $\sigma \colon M \to V$ (see Construction~\ref{ctr:zerosection}). Then, the cotangent module $T^*_\cX$ (Notation \ref{not:tangent_cotangent}) of $\cX$ is isomorphic to \begin{equation} \label{eq:cotangent} T^*_\cX \cong \cone(V^*_\cX \xrightarrow{(d\sigma)^*}  \iota^*_\cX T^*_M), \end{equation} where the map $(d\sigma)^*$ is defined by $(d\sigma)^*(\alpha) = d\langle \sigma, \alpha \rangle$ for any element $\alpha \in V^*$ and is extended to the whole dg-module $V^*_\cX$ by $C^{\infty}(\cX)$-linearity. Here $T^*_M$ is the module of one-forms on the underlying manifold $M$. 

    Dually, the tangent module $T_\cX$ is isomorphic to \begin{equation} \label{eq:tangent} T_\cX \cong \cocone(\iota^*_\cX T_M \xrightarrow{-d\sigma} V_\cX) ,\end{equation} where the map $d\sigma$ is dual to $(d\sigma)^*$ and the module $T_M$ is the module of vector fields on $M$.
\end{proposition}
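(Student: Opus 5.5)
Set $\cX=E[-1]$ with $E=M\times V$, so that $C^\infty(\cX)=C^\infty(M)\otimes\wedge^{\bullet}V^*$ (with $V^*$ in degree $-1$) and $\delta_\cX=\iota_\sigma$. The plan is to build the candidate module directly and check that it satisfies the universal property of Definition~\ref{def:kahler}.

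\textbf{The candidate and the derivation.} As a graded module, the cone $\mathcal{C}:=\cone(V^*_\cX\xrightarrow{(d\sigma)^*}\iota^*_\cX T^*_M)$ is
\[
\mathcal{C}=V^*_\cX[1]\oplus\iota^*_\cX T^*_M .
\]
It is free over $C^\infty(\cX)$, with generators $dx$ (degree $0$) and $d\xi_\alpha:=\alpha$ in $V^*_\cX[1]$ (degree $-1$), for $\alpha\in V^*$.

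Define $d\colon C^\infty(\cX)\to\mathcal{C}$ on generators:
\begin{itemize}[label={}]
\item on $C^\infty(M)$, set $d f=d_{\mathrm{dR}}f\otimes 1$;
\item on a degree $-1$ generator $\xi_\alpha\in V^*$, let $d\xi_\alpha$ be the corresponding element of $V^*_\cX[1]$.
\end{itemize}
Extend by the graded Leibniz rule. This is well defined because $C^\infty(\cX)$ is free over $C^\infty(M)$ on the exterior generators. The $C^\infty$-derivation condition (2) holds because $d_{\mathrm{dR}}$ satisfies it on $C^\infty(M)$.

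\textbf{Universal property.} Let $d'$ be a degree $n$ $C^\infty$-derivation. Then $\phi$ is forced on the generators:
\[
\phi(dx)=d'(x),\qquad \phi(d\xi_\alpha)=d'(\xi_\alpha).
\]
For the first assignment, the classical universal property of Remark~\ref{rmk:manifold_kahler} applies to the restriction of $d'$ to $C^\infty(M)$, which is a degree-$0$-type $C^\infty$-derivation into $\mathcal{M}$ viewed as a $C^\infty(M)$-module. Since $\mathcal{C}$ is free on these generators, $\phi$ extends uniquely by linearity. Two derivations that agree on generators agree everywhere, so $\phi\circ d=d'$. Signs from the degree-$n$ twist in Definition~\ref{def:shift} must be tracked but cause no difficulty.

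\textbf{Differential.} This is the main check. The differential on $T^*_\cX$ is determined by the requirement $d\circ\delta_\cX=\delta^1_\cX\circ d$. On $df$ with $f\in C^\infty(M)$ we have $\delta_\cX f=0$, consistent with $\delta(df)=0$ in $\iota^*T^*_M$ (the internal differential there acts only on coefficients). On the odd generator,
\[
\delta^1(d\xi_\alpha)=d(\iota_\sigma\xi_\alpha)=d\langle\sigma,\alpha\rangle=(d\sigma)^*(\alpha).
\]
This is exactly the off-diagonal entry $f$ of the cone differential in Definition~\ref{def:cone_cocone}. On the coefficients, the $-\delta$ on the shifted summand $V^*_\cX[1]$ agrees with the sign convention $\delta_{\mathcal{M}[1]}=-\delta_{\mathcal{M}}$, after matching the module-structure sign $(-1)^{|f|}$ against the Leibniz rule. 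Verifying that the differential $\delta^1$ forced by $d$ is $C^\infty(\cX)$-compatible and coincides with the cone differential on general elements $g\,d\xi_\alpha$ is the step I expect to be the fiddliest, since it involves the shift conventions. I would check it on generators and then use that both differentials satisfy the same Leibniz rule.

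\textbf{Tangent module.} Apply $\underline{\mathrm{Hom}}(-,C^\infty(\cX))$ to the free module $\mathcal{C}$. We have $\underline{\mathrm{Hom}}(\iota^*T^*_M,C^\infty(\cX))\cong\iota^*T_M$ because $T^*_M$ is finitely generated projective, and $\underline{\mathrm{Hom}}(V^*_\cX[1],C^\infty(\cX))\cong V_\cX[-1]$. Dualizing the cone of a map gives the cocone of the dual map, and the sign produced by the shift turns $(d\sigma)^*$ into $-d\sigma$. This yields \eqref{eq:tangent}.
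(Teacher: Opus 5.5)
For the cotangent module your argument is essentially the paper's. You define $d$ on $C^\infty(M)$ and on the odd generators and extend by Leibniz. You check $d\circ\delta_\cX=\delta^1_\cX\circ d$ on generators; the paper phrases this as: the difference is a degree-$1$ derivation vanishing on $C^\infty(M)$ and on $V^*$. You then get $\phi$ from the classical universal property in degree $0$ and from $\phi(d\xi_\alpha)=d'(\xi_\alpha)$ on the odd generators.

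One statement needs correcting. $\mathcal{C}$ is not free over $C^\infty(\cX)$ unless $T^*M$ is trivial. The summand $\iota^*_\cX T^*_M=C^\infty(\cX)\otimes_{C^\infty(M)}\Omega^1(M)$ is only projective, and in general there are no global generators $dx$. So "$\phi$ extends uniquely by linearity because $\mathcal{C}$ is free" should instead be justified by extension of scalars,
$\mathrm{Hom}_{C^\infty(\cX)}\bigl(C^\infty(\cX)\otimes_{C^\infty(M)}T^*_M,-\bigr)\cong\mathrm{Hom}_{C^\infty(M)}(T^*_M,-)$,
applied to the map $\phi_0\colon T^*_M\to\mathcal{M}_n$ from the classical universal property. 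This is exactly what the paper does. Freeness is needed, and true, only on the summand $V^*_\cX[1]$.

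For the tangent module you take a different route from the paper.
\begin{itemize}
\item \emph{Your route:} you dualize formally. The internal Hom of a cone is the cocone of the dual map, and base change for the finitely generated projective module $T^*_M$ gives $\underline{\mathrm{Hom}}(\iota^*_\cX T^*_M,C^\infty(\cX))\cong\iota^*_\cX T_M$.
\item \emph{The paper's route:} it identifies $T_\cX$ with derivations and computes $T_{\cX,n}$ degree by degree: zero in degrees $\geq 2$, $C^\infty(M)\otimes V$ in degree $1$, and so on. It then reads off the differential from the commutators $[\iota_\sigma,W]=-\iota_{W(\sigma)}$ and $[\iota_\sigma,\iota_\xi]=0$.
\end{itemize}
Your version is shorter and purely homological. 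The sign you leave unchecked is harmless, since $\cocone(g)\cong\cocone(-g)$ via $\mathrm{id}\oplus(-\mathrm{id})$; with the paper's conventions it does come out as $-d\sigma$. The paper's version has a different advantage: it produces the explicit generators $W$ and $\iota_\sigma$ and the formula for their differential. These are used later, in Notation~\ref{not:dg_alg_not} and in the anchor computation of Proposition~\ref{prop:algebroid}.
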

\begin{proof} The proof will consist of three steps: in the first step we will explicitly construct the exterior derivative $d\colon C^{\infty}(\cX) \to T^*_\cX$. In the second step we will check that the pair $(T^*_\cX, d)$ satisfies the universal property from Definition \ref{def:kahler}. Finally, in the third step we will compute the tangent module $T_\cX$ as the internal Hom $\underline{\mathrm{Hom}}_{C^{\infty}(\cX)}(T^*_\cX, C^{\infty}(\cX))$.

\begin{itemize}[leftmargin=*]
    
\item \emph{Constructing the exterior differential.} Recall that the dg-algebra of functions on $\cX$ satisfies $C^{\infty}(\cX) = \cin(M, \bigwedge^{-\bullet} V^*) \cong C^{\infty}(M)\otimes_\R \bigwedge^{-\bullet}V^*$. It is generated as a graded algebra over $C^{\infty}(M)$ by elements of the form $\alpha \in V^*$. 

 On the level of graded $\cin(\cX)$-modules, the cone module $T^*_\cX$ from equation \eqref{eq:cotangent} is given by \[T^*_\cX =  \iota^*_\cX T^*_M \oplus V^*_\cX[1] \cong   (\cin(\cX)\otimes_{\cin(M)} T^*_M) \oplus (\cin(\cX)\otimes V^* [1]).\] In order to distinguish between degree $-1$ elements $\alpha \in V^*$ of $C^{\infty}(\cX)$ and degree $-1$ elements $\tilde{\alpha} = 1\otimes \alpha \in V^*_\cX[1]$ of $T^*_\cX$ we will denote the latter with a tilde.
 
 We construct the exterior derivative $d\colon C^{\infty}(\cX)\to T^*_\cX$ using formula \[d(f\cdot\alpha_1 \wedge \ldots \wedge \alpha_k) = (\alpha_1 \wedge \ldots \wedge \alpha_k) \otimes df +\sum_{i=1}^k (-1)^{k-i} (f\cdot\alpha_1 \wedge \ldots \wedge \hat{\alpha}_i\wedge\ldots \wedge \alpha_{k}\otimes \tilde{\alpha}_i) \] for any smooth function $f \in C^{\infty}(M)$ and elements $\alpha_i \in V^*$. 

 Note that our candidate for the module of K\"ahler differentials of the dg-manifold $\cX$, given by $T^*_\cX = \cone(V^*_\cX \xrightarrow{(d\sigma)^*}  \iota^*_\cX T^*_M)$ is generated as a graded $C^{\infty}(\cX)$-module by elements of $T^*_M$ of degree 0 and $V^*$ of degree -1. 

We denote the interior differential of the cone module $T^*_\cX$ by $\delta^1_{\cX}$ (explicitly it is given by a matrix as in Definition \ref{def:cone_cocone}). According to Definition \ref{def:kahler}, we should check that the exterior derivative commutes with inner differentials \[d \circ \delta_{\cX} = \delta^1_{\cX} \circ d.\] Since the difference $d \circ \delta_\cX - \delta^1_\cX \circ d$ is a derivation of degree $1$ with values in $T^*_\cX$ (the failure of the Leibniz rule for the two composite operators cancels), it suffices to check the equality on generators of $C^{\infty}(\cX)$: for a smooth function $f \in C^{\infty}(M) \subseteq \cin(\cX)_0$  we have $d(\delta_{\cX} f) = 0 = \delta^1_{\cX}(df)$, since $f$ and $df$ have degree 0, and for an element $\alpha \in V^* \subseteq \cin(\cX)_{-1}$ we have \[d(\delta_{\cX} \alpha) = d\langle \sigma, \alpha \rangle = (d\sigma)^*(\alpha) = \delta^1_{\cX}(d\alpha).\]

\item \emph{Showing universal property.} Having constructed the exterior differential $d\colon C^{\infty}(\cX) \to T^*_\cX$, we will now check the universal property of the pair $(T^*_\cX, d)$. Suppose $\mathcal{M} = \{ \mathcal{M}_i \}_{i\in \Z}$ is a graded $C^{\infty}(\cX)$-module, and $v \colon C^{\infty}(\cX) \to \mathcal{M}$ is a derivation of degree $n$. We want to construct a unique map of $C^{\infty}(\cX)$-modules $\phi \colon T^*_\cX \to \mathcal{M}$ of degree $n$ which completes the diagram

\begin{center}
\begin{tikzcd}
C^{\infty}(\cX) \arrow[r, "v"] \arrow[rd, "d"] & \mathcal{M}                                                   \\
                                                  & T^*_\cX \arrow[u, "\phi"', dotted]
\end{tikzcd}
\end{center}
In degree 0 the functions on the dg-manifold $\cX$ are just smooth functions on the manifold $M$, i.e. $\cin(\cX)_0 = \cin(M)$, and $v_0 \colon \cin(M) \to \mathcal{M}_n$ is a $\cin(M)$-derivation. By the universal property of the module of one-forms $T^*_M$ of the manifold $M$ (see Remark \ref{rmk:manifold_kahler}), we get a unique map $\phi_0 \colon T^*_M \to \mathcal{M}_n$ of $\cin(M)$-modules. We then extend it to the map $\phi \colon T^*_\cX \to \mathcal{M}$ by setting \[\phi(\theta)=\phi_0(\theta) \quad \text{for} \quad \theta \in T^*_M\] and \[\phi(\tilde{\alpha}) = v(\alpha) \quad \text{for} \quad \tilde{\alpha} = d\alpha \in V^*_\cX[1] \] and extend to the whole cotangent module $T^*_\cX$ by $\cin(\cX)$-linearity. This extension is well defined, since the degree zero part $\phi_0$ is a $C^{\infty}(M)$-module map, and the derivation $v$ is $\R$-linear.

We are left with checking that the map $\phi$ is unique. Suppose that $\psi \colon T^*_\cX\to \mathcal{M}$ is another graded $C^{\infty}(\cX)$-module map of degree $n$, such that $v=\psi \circ d$. Since the degree 0 part $\phi_0$ is unique by the universal property of one-forms $T^*_M$, we have $\psi_0 = \phi_0$. Let $\alpha \in V^*$ be a generator of the dg-algebra $\cin(\cX)$ and $\tilde{\alpha} = d\alpha \in V^*_\cX[1]$. Then we have $\psi(\tilde{\alpha}) = \psi(d\alpha) = v(\alpha) = \phi(\tilde{\alpha})$. Therefore the maps $\psi$ and $\phi$ coincide on generators of the cotangent module $T^*_\cX$, and hence they are equal.

\item \emph{Computing the tangent module.} To compute the tangent module, note that each element $\partial \in T_\cX = \underline{\mathrm{Hom}}_{C^{\infty}(\cX)}(T^*_\cX, C^{\infty}(\cX))$ is determined by its action on generators of the cotangent module $T^*_\cX$ over the dg-algebra $\cin(\cX)$, i.e. on elements in the module of smooth one-forms $T^*_M$ of degree 0 and on one-forms $V^*$ of derived (internal) degree $-1$. In particular, any element $\partial \in T_\cX$ of degree $2$ and higher must send both types of generators to zero, since the graded algebra $\cin(\cX)$ is concentrated in non-positive degrees, and therefore the vector field $\partial$ must vanish. Let $\partial \in T_{\cX, 1}$ be a derivation of degree $1$. Then it maps smooth one-forms $T^*_M$ to $\cin(\cX)_1 = 0$, and derived degree $-1$ one-forms $V^*$ to $ \cin(\cX)_0 = \cin(M)$. Therefore, $T_{\cX, 1} \cong C^{\infty}(M) \otimes V$, where under this isomorphism an element $1\otimes \xi \in \cin(M) \otimes V$ corresponds to contraction $\iota_\xi \colon \cin(M)\otimes \wedge^{-\bullet} V^* \to \cin(M)\otimes \wedge^{-\bullet + 1} V^*$. A derivation $\partial \in T_{\cX, n}$ of degree $n\leq 0$ maps smooth one-forms $T^*_M$ to $\cin(\cX)_{n} = C^{\infty}(M) \otimes \bigwedge^{-n} V^*$ as a $\cin(M)$-linear map and derived degree $-1$ one-forms $V^*$ to $\cin(\cX)_{n-1} = C^{\infty}(M) \otimes \bigwedge^{-n+1} V^*$. Therefore, \[T_{\cX, n} \cong (T_M \otimes \bigwedge^{-n} V^*) \oplus (C^{\infty}(M) \otimes \bigwedge^{-n+1} V^*)\otimes V.\]

The inner differential $\mathcal{L}_{\iota_\sigma}$ of the tangent module $T_\cX$ acts on the generators $W \in T_M$ of degree 0 and vector fields $\iota_\xi \in T_{\cX , 1}$ of degree 1 (contractions with $\xi \in V$) as follows: \begin{align*} \mathcal{L}_{\iota_\sigma}(W) &= [\iota_\sigma, W]  = -\iota_{W(\sigma)} \quad \text{for } W \in T_M \\
  \mathcal{L}_{\iota_\sigma}(\iota_\xi) &= [\iota_\sigma, \iota_\xi] =0 \quad \text{for } \xi \in V.
\end{align*}

Therefore, both the underlying graded module and the differential on the tangent module $T_\cX$ are isomorphic to those on the cocone module $\cocone(\iota^*_\cX T_M \xrightarrow{-d\sigma} V_\cX)$, which proves that they are isomorphic as dg-$\cin(\cX)$-modules.
\end{itemize}
\end{proof}

\begin{remark} 
    The tangent and cotangent modules $T_\cX$ and $T^*_\cX$ of a quasi-smooth dg-manifold $\cX$ computed in Proposition \ref{prop:kahler} are both graded projective modules over the graded algebra $\cin(\cX)$ (see Definition \ref{def:graded_projective}). This can be seen from the explicit formulas \eqref{eq:cotangent} and \eqref{eq:tangent}. In particular, the tangent module $T_\cX$ is a direct sum of a free module $V_\cX[-1]$ and the pullback $\iota^*_\cX T_M$ of vector fields on the underlying manifold $M$, which is projective as a module over the algebra of smooth functions $\cin(M)$, and hence its pullback is also projective as a module over $\cin(\cX)$. Same reasoning applies also for the cotangent module $T^*_\cX$. 
    
    Furthermore, since the vector space $V$ is finite dimensional, the modules $T_\cX$ and $T^*_\cX$ are finitely generated (see Definition \ref{def:finitely_gen}) as well. Therefore, under the Serre--Swan correspondence (see Theorem \ref{thm:serre_swan}), the modules $T_\cX$ and $T^*_\cX$ correspond to the tangent and cotangent dg-vector bundles over the dg-manifold $\cX$.
\end{remark}

\section{Action groupoid in dg-manifolds} \label{sec:actiongroupoid}

We now switch to the study of singular symplectic reduction. Let $(M, \omega)$ be a symplectic manifold with a Hamiltonian $G$-action and a $G$-equivariant moment map $\mu \colon M \to \fg^*$. We construct a derived version of the zero-level set of the moment map $\mu$, by considering a dg-manifold (see Construction~\ref{ctr:zerosection}) \begin{equation} \label{eq:zerolevelset} \cZ = (M, \cin(M)\otimes_\R \bigwedge^{-\bullet} \fg, \iota_{\mu}) \end{equation} (by abuse of notation we write the global algebra of functions instead of the sheaf). As discussed in Construction~\ref{ctr:zerosection}, this is a model for the homotopy fiber product $M \times^h_{\mu, \fg^*, 0} M$. The algebra $\cin(\cZ)$ is generated over smooth functions $C^{\infty}(M)$ by elements $X \in \fg$ of degree $-1$. The differential acts on these generators by $\iota_{\mu}(X) = \mu^X$, where we write $\mu^X = \langle \mu, X \rangle$.

The reduced space $M_0 = \mu^{-1}(0)/G$ will be modeled as an action groupoid in dg-manifolds. In the case of manifolds, Lie groupoids serve as atlases for differentiable stacks. Therefore, the groupoid corresponding to the $G$-action on the derived zero set $\cZ$ can be considered to be a derived stack model for $M_0$ (see Section~\ref{sec:overview} for discussion).

We first give a general definition of groupoids in dg-manifolds and their nerves, and then specialize to the action groupoid corresponding to the $G$-action on the dg-manifold $\cZ$. The notion of a dg-groupoid was already considered by Mehta (see \cite{QGROUPOIDS}), however, he used a  definition of graded manifolds that contained an issue, which was subsequently corrected by Vysoky in \cite{VYSOKY} (see the Introduction of \cite{VYSOKY} for a discussion). Therefore we think it is important to give a self-contained definition of groupoids in dg-manifolds and their nerves within Vysoky's framework. Another small difference from Mehta's approach is that we construct a groupoid internal to the category of dg-manifolds, while Mehta constructs a groupoid in graded manifolds and then equips it with a multiplicative cohomological vector field. However, these two approaches are equivalent, since a multiplicative cohomological vector field induces a dg-structure on the whole nerve of the graded groupoid, making it into a groupoid in dg-manifolds (see \cite[Theorem 4.1]{QGROUPOIDS}).

\begin{definition}
    We say that a map of dg-manifolds $f=(\underline{f}, f^*)\colon \cX \to \cY$ is a \emph{submersion} if for any point $m \in \cX^0$ the induced map on tangent spaces $T_m f \colon T_m \cX \to T_{\underline{f}(m)} \cY$ is surjective. Here, $T_m \cX$ denotes the graded vector space of derivations of the structure sheaf $\cO_\cX$ at the point $m\in \cX^0$ (see Definition \ref{def:tangent_space}).

    We say that a map of dg-manifolds $f\colon \cX \to \cY$ is a \emph{surjective submersion} if it is a submersion and the underlying map of manifolds $\underline{f}\colon \cX^0 \to \cY^0$ is surjective.
\end{definition}

\begin{remark}
    Our definition of surjective submersion is the same as the definition of \emph{fibration} used to define the structure of a category of fibrant objects on the category of dg-manifolds in \cite{BLX}.
\end{remark}

\begin{definition}
    A \emph{groupoid in dg-manifolds} or a \emph{dg-groupoid} is a groupoid object in the category of dg-manifolds, such that the source and target maps are surjective submersions. More explicitly, it consists of a pair of dg-manifolds $\cG$ and $\cX$, together with surjective submersions $s,t \colon \cG \to \cX$, a multiplication map $m\colon \cG\times_{s, \cX, t} \cG \to \cG$ (this fiber product exists by Proposition \ref{prop:dg_fiber_products}), a unit map $u \colon \cX \to \cG$ and an inverse map $i\colon \cG \to \cG$, satisfying the usual groupoid axioms.
\end{definition}

\begin{construction} \label{ctr:actiongroupoid_general}
Let $G$ be a Lie group and $\cX$ be a dg-manifold with a $G$-action. This means that there is a map of dg-manifolds $a\colon G\times \cX \to \cX$ such that the following diagrams commute: \begin{center}
\begin{tikzcd}G\times G \times \cX \arrow[r, "m_G\times \mathrm{id}"] \arrow[d, "\mathrm{id}\times a"] & G\times \cX \arrow[d, "a"] \\
G\times \cX \arrow[r, "a"] & \cX
\end{tikzcd}
\end{center}
and \begin{center}
\begin{tikzcd}\{e\} \times \cX \arrow[r, "u_G\times \mathrm{id}"] \arrow[rd, "\cong"'] & G\times \cX \arrow[d, "a"] \\
& \cX   
\end{tikzcd}
\end{center}
where $m_G\colon G\times G \to G$ is the group multiplication map and $u_G\colon \{e\} \to G$ is the unit map. Then there exists a groupoid in dg-manifolds $\cX/G = \{G\times \cX \rightrightarrows \cX\}$, called the \emph{action groupoid} (the notation refers to the fact that the groupoid $\cX/G$ models the quotient stack $[\cX/G]$). The source and target maps are given by $s = \mathrm{pr}_\cX$ and $t = a$. To define the groupoid multiplication map $m \colon (G\times \cX) \times_\cX (G\times \cX) \to G\times \cX$ we first take a map $\overline{m}\colon G\times \cX \times G \times \cX \to G\times \cX$ defined as multiplication on the $G$-components and projection on the $\cX$-components. There is a natural inclusion of the fiber product into the product as a dg-submanifold (see Proposition \ref{prop:dg_fiber_products}). Therefore we can restrict the map $\overline{m}$ to the fiber product to get the groupoid multiplication $m$. The unit and inverse maps come from the corresponding maps on the Lie group $G$. 
\end{construction}

\begin{construction} \label{ctr:actiongroupoid}
Let $\cZ$ be the derived zero set of the moment map $\mu \colon M \to \fg^*$ as in equation \eqref{eq:zerolevelset}. The $G$-action on $\cZ$ is defined in terms of the $G$-action on $M$ and the adjoint action on the Lie algebra $\fg$. More explicitly, we define the action map $a = (\underline{a}, a^*)\colon G\times \cZ \to \cZ$ as follows. 

\begin{itemize}[leftmargin=*] \item The underlying map $\underline{a} \colon G \times M \to M$ is the ordinary action of the Lie group $G$ on the manifold $M$. 
    \item The pullback map $a^* \colon \cin(\cZ) \to \cin(G \times \cZ)$ is defined by its action on generators $X \in \fg$ of $\cin(\cZ)$. The element $a^*(X) \in \cin(G\times \cZ)_{-1} = \cin(G \times M) \otimes \fg$ is given by \[a^*(X)(g,m) = \mathrm{Ad}_{g^{-1}} X.\]  In order to check that the pullback $a^*$ commutes with differentials, it is enough to check that the equation $a^* \circ \iota_{\mu} = \iota_{\mu} \circ a^*$ (here, abusing the notation, we denote the inner differential on $G\times \cZ$ also by $\iota_\mu$) holds on generators $X \in \fg$. We have $a^* \iota_{\mu}(X)(g,m) = a^* \mu^X (g,m) = \mu^X(gm)$. On the other hand $\iota_{\mu} a^*(X)(g,m) = \mu^{\mathrm{Ad}_{g^{-1}}X}(m) = \mu^X(gm)$ by $G$-equivariance of the moment map $\mu$.
\end{itemize}

It is straightforward to check that $a$ defines a $G$-action. Therefore, we can apply Construction~\ref{ctr:actiongroupoid_general} to get the action groupoid \[\cZ/G = \{G\times \cZ \rightrightarrows \cZ\}\] corresponding to the $G$-action $a$ on the derived zero set $\cZ$.
\end{construction}

Next, we define the nerve of a groupoid in dg-manifolds. The notion of a nerve of an internal category goes back to Grothendieck (see \cite{GROTHENDIECK}).

\begin{definition} \label{def:nerve}
    Let $(\cG \rightrightarrows \cX, s, t, m, u, i)$ be a groupoid in dg-manifolds. Its \emph{nerve} is the simplicial dg-manifold $N_\bullet(\cG) \colon \Delta^{\mathrm{op}} \to \catname{dgMan}$ defined as follows.
    \begin{itemize}
        \item In degree $0$: $N_0(\cG) = \cX$.
        \item In degree $1$: $N_1(\cG) = \cG$.
        \item In degree $n \geq 2$: $N_n(\cG) = \cG^{(n)}$, the $n$-fold iterated fiber product
        \[
            \cG^{(n)} = \underbrace{\cG \times_{s,\cX,t} \cG \times_{s,\cX,t} \cdots \times_{s,\cX,t} \cG}_{n},
        \]
        which exists by repeated application of Proposition \ref{prop:dg_fiber_products}. We denote by $\mathrm{pr}_k \colon \cG^{(n)} \to \cG$ the $k$-th projection morphism, for $1 \leq k \leq n$.
    \end{itemize}
    The \emph{face maps} $\partial_k \colon N_n(\cG) \to N_{n-1}(\cG)$ for $n\geq 1$ and $0 \leq k \leq n$ are the morphisms of dg-manifolds defined for $n = 1$  by

    \[\partial_0 = s, \quad \partial_1 = t\]

    and for $n \geq 2$ via the universal property of the fiber product by the following formulas:
    \[
        \partial_k = \begin{cases}
            (\mathrm{pr}_2, \ldots, \mathrm{pr}_n) & k = 0, \\
            (\mathrm{pr}_1, \ldots, \mathrm{pr}_{k-1}, m \circ (\mathrm{pr}_k, \mathrm{pr}_{k+1}), \mathrm{pr}_{k+2}, \ldots, \mathrm{pr}_n) & 1 \leq k \leq n-1, \\
            (\mathrm{pr}_1, \ldots, \mathrm{pr}_{n-1}) & k = n,
        \end{cases}
    \]
    where for $k=0$ and $k=n$ the result lands in the $(n-1)$-th fiber product $\cG^{(n-1)}$ because projections $\mathrm{pr}_k$ already satisfy $s \circ \mathrm{pr}_k = t \circ \mathrm{pr}_{k+1}$ by construction of the iterated fiber product, and for $1 \leq k \leq n-1$ the compatibilities $s \circ m \circ (\mathrm{pr}_k, \mathrm{pr}_{k+1}) = s \circ \mathrm{pr}_{k+1}$ and $t \circ m \circ (\mathrm{pr}_k, \mathrm{pr}_{k+1}) = t \circ \mathrm{pr}_k$ follow from the groupoid axioms.

    The \emph{degeneracy maps} $\varsigma_k \colon N_n(\cG) \to N_{n+1}(\cG)$ for $0 \leq k \leq n$ are the morphisms of dg-manifolds defined by
    \[
        \varsigma_k = \begin{cases}
            (u \circ t \circ \mathrm{pr}_1,\, \mathrm{pr}_1, \ldots, \mathrm{pr}_n) & k = 0, \\
            (\mathrm{pr}_1, \ldots, \mathrm{pr}_k,\, u \circ s \circ \mathrm{pr}_k,\, \mathrm{pr}_{k+1}, \ldots, \mathrm{pr}_n) & 1 \leq k \leq n,
        \end{cases}
    \]
    where the inserted unit $u \circ s \circ \mathrm{pr}_k$ (resp.\ $u \circ t \circ \mathrm{pr}_1$) is composable with its neighbours because $s \circ u = \mathrm{id}_\cX = t \circ u$ by the unit axiom. All face and degeneracy maps are morphisms of dg-manifolds, and they satisfy the simplicial identities by the groupoid axioms.
\end{definition}

\begin{construction}
    \label{ctr:actiongroupoid_nerve}
    Let $\cX/G$ be the action groupoid from Construction~\ref{ctr:actiongroupoid_general}. Then there is a natural identification \[N_n(\cX/G) \cong G^n \times \cX.\] It is constructed as follows. Let $\alpha_k \colon G^n \times \cX \to \cX$ for $ 1 \leq k \leq n+1$ be a family of maps defined recursively by \[\alpha_{n+1} = \mathrm{pr}_\cX \quad \text{and} \quad \alpha_k = a \circ (\mathrm{pr}_k, \alpha_{k+1}) \quad \text{for } 1 \leq k \leq n.\] Define the map $f_k = (\mathrm{pr}_k, \alpha_{k+1}) \colon G^n \times \cX \to G \times \cX$ for $1 \leq k \leq n$. Since $s \circ f_k = \alpha_{k+1}$ and $t \circ f_{k+1} = \alpha_{k+1}$, the map $\phi_n = (f_1, \ldots, f_n) \colon G^n \times \cX \to N_n(\cX/G) = (G\times \cX)^{(n)}$ is well defined.

    Conversely, denoting by $\mathrm{pr}^G_k$ and $\mathrm{pr}^\cX_k$ the $G-$ and $\cX$-components of the $k$-th projection $\mathrm{pr}_k \colon (G\times \cX)^{(n)} \to G\times \cX$, there is an inverse map \[ \psi_n := (\mathrm{pr}^G_1, \ldots, \mathrm{pr}^G_n, \mathrm{pr}^\cX_n) \colon N_n(\cX/G) \to G^n \times \cX.\] 

    Furthermore, $G^{\bullet} \times \cX$ has the natural structure of a simplicial dg-manifold, where $\partial_0$ drops the first $G$-factor, $\partial_k$ for $1 \leq k \leq n-1$ multiplies the $k$-th and $(k+1)$-th $G$-factors, and $\partial_n$ applies the action of the last $G$-factor on $\cX$. The degeneracy maps insert the unit element of $G$ in the corresponding position. It is straightforward to check that the maps $\phi_n$ are compatible with face and degeneracy maps, and hence they define an isomorphism of simplicial dg-manifolds $N_\bullet(\cX/G) \cong G^{\bullet} \times \cX$.
\end{construction}

\begin{definition} \label{def:cochains}
    Given a groupoid in dg-manifolds $\cG \rightrightarrows \cX$, we define a \emph{complex of groupoid cochains} $(C^{\bullet}(\cG), \partial)$ to be a cochain complex of dg-vector spaces whose terms are the dg-algebras $C^p(\cG) = C^{\infty}(N_p(\cG))$, and \[\partial^* = \sum_{i=0}^{p+1} (-1)^i \partial_i^* \colon C^p(\cG) \to C^{p+1}(\cG).\] 
\end{definition}

\section{Dg-Lie algebroid of an action groupoid} \label{sec:dgalg}

In this section we compute the Lie algebroid of the groupoid $\cZ/G$ defined in Construction~\ref{ctr:actiongroupoid}. It will play an essential role in the construction of tangent and cotangent complexes of the dg-groupoid $\cZ/G$. We will use the following definition, which we take from Nuiten \cite[Definition 3.1.1]{NUITEN}. It is a dg-version of the classical notion of a Lie--Rinehart algebra.

\begin{definition}
    A dg-Lie algebroid $(A,\delta_A)$ over a dg-manifold $(\cX,\mathcal{O}_\cX)$ is a dg-$C^{\infty}(\cX)$-module, equipped with a $\mathbb{R}$-linear dg-Lie algebra structure and an anchor map $\rho \colon A \to T_{\cin(\cX)}$ which is both a map of $C^{\infty}(\cX)$-modules and dg-Lie algebras, such that \[[X,a\cdot Y] = (-1)^{|X||a|}a[X,Y] + \rho(X)(a) \cdot Y \] for all elements $X,Y \in A$ and functions $a \in C^{\infty}(\cX)$.
\end{definition}

In order to define the dg-Lie algebroid of a groupoid in dg-manifolds, we will need the following notion.

\begin{definition}
    Given a map of dg-manifolds $f\colon \cX \to \cY$, a \emph{$f$-derivation} of the dg-algebra $\cin(\cY)$ with values in the target dg-algebra $\cin(\cX)$ is an $\mathbb{R}$-linear map $\alpha \colon \cin(\cY) \to \cin(\cX)$ such that \[\alpha(g\cdot h) = \alpha(g)f^*(h) + (-1)^{|\alpha||g|}f^*(g)\alpha(h)\] for all $g,h \in \cin(\cY)$. We denote the space of $f$-derivations by $\mathrm{Der}_f(\cin(\cY), \cin(\cX))$.
\end{definition}

\begin{remark}
    A careful reader might notice that we do not impose the chain rule on $f$-derivations, unlike in Definition \ref{def:derivation}. In fact, it can be shown that the chain rule is automatically satisfied for $f$-derivations using a local description similar to that of \cite[Proposition 4.14]{VYSOKY}. However, this will play no role in what follows.
\end{remark}

The following construction appeared in the work of Mehta \cite[Section 3.2]{MEHTATHESIS}. This is a dg-analogue of the construction of the Lie-Rinehart algebra of a Hopf algebroid described in \cite[Section~5]{HOPF}.

\begin{construction}[dg-Lie algebroid of a groupoid in dg-manifolds] \label{ctr:dgalgebroid}
    Given a dg-groupoid $\cG \rightrightarrows \cX$ there is a $\cin(\cX)$-module $A$ defined as the space of $u$-derivations of the dg-algebra $\cin(\cG)$ which vanish along the source map $s$: \[ A = \mathrm{Der}^s_u(\cin(\cG), \cin(\cX)) \] where \[ \mathrm{Der}^s_u(\cin(\cG), \cin(\cX)) = \left\{ \alpha : \cin(\cG) \to \cin(\cX) \ \middle|\begin{array}{l} \alpha \text{ is a }  u\text{-derivation}, \\[6pt] \alpha(s^*h) = 0 \text{ for any } h \in \cin(\cX). \end{array} \right\}\]

    The differential $\delta_A$ on the dg-module $A$ is defined by \[\delta_A(\alpha)(f) =\delta_\cX(\alpha(f)) - (-1)^{|\alpha|}\alpha(\delta_\cG(f)).\] It preserves the subspace of $u$-derivations vanishing along the source map $s$, since both pullback maps $s^*$ and $u^*$ commute with differentials. Furthermore, there is a dg-$\cin(\cX)$-module map $\rho \colon A \to T_{\cin(\cX)}$ defined by $\rho(\alpha)(f) = \alpha(t^*f)$ for $f \in \cin(\cX)$. 
    
The $\cin(\cX)$-module $A$ carries a natural structure of a dg-Lie algebroid over the dg-manifold $\cX$, with anchor map $\rho$ defined above. The bracket on the dg-module $A$ is given on $u$-derivations $\alpha$ and $\beta$ by
\[
    [\alpha, \beta](f) = \alpha(\tilde\beta(m^*f)) - (-1)^{|\alpha||\beta|}\beta(\tilde\alpha(m^*f)),
\]
where $\tilde\alpha, \tilde\beta \colon \cin(\cG^{(2)}) \to \cin(\cG)$ are unique $\varsigma_1$-derivations (here $\varsigma_1$ is the degeneracy map from Definition \ref{def:nerve}) which satisfy \[\tilde\alpha \circ \mathrm{pr}_1^* = 0 \quad \text{ and } \quad \tilde\alpha \circ \mathrm{pr}_2^* = t^* \circ \alpha,\] and similarly for $\tilde\beta$ (see \cite[Section 3.2.1]{MEHTATHESIS}).
\end{construction}

\begin{remark}
    \label{rmk:smooth_algebroid}
    The algebroid $A$ constructed above can be identified with the pullback $u^* \ker(ds)$ along the unit, where $ds \colon T_\cG \to s^* T_\cX$ is the tangent map of the source. To make this precise, there is a $\cin(\cX)$-module map
    \[
        \phi \colon T_\cG \otimes_{\cin(\cG)} \cin(\cX) \to \mathrm{Der}_u(\cin(\cG), \cin(\cX)),
        \qquad \phi([X \otimes f]) = f \cdot (u^* \circ X),
    \]
    which restricts to a map
    \[
        \phi \colon u^* \ker(ds) \to \mathrm{Der}^s_u(\cin(\cG), \cin(\cX)) = A.
    \]
    In the smooth case, the left side is the pullback of the module of sections of $\ker(ds)$, while the right side is the module of sections of the pullback bundle $u^* \ker(ds)$ and the map $\phi$ is an isomorphism by a standard argument (see e.g.\ \cite[Theorem 12.42]{NESTRUEV}). An analagous line of reasoning applies also in the dg-setting, however, since we will not need this result in what follows, we will omit the details.
\end{remark}

\begin{notation}
    \label{not:dg_alg_not}
Before proceeding, let us fix some notation. Let $\cZ$ be the derived zero-level set of the moment map $\mu$ with global dg-algebra of functions given by $\cin(\cZ) = \cin(M) \otimes  \wedge^{-\bullet} \fg$ (see \eqref{eq:zerolevelset}). Recall from Proposition~\ref{prop:kahler} that \[T^*_\cZ \cong \cone(\fg_\cZ \xrightarrow{(d\mu)^*} \iota^*_\cZ T^*_M ) = \fg_\cZ[1] \oplus  \iota^*_\cZ T^*_M, \text{ and } T_{\cZ} \cong \cocone(\iota^*_\cZ T_M \xrightarrow{-d\mu} \fg^*_\cZ) = \iota^*_\cZ T_M \oplus \fg^*_\cZ[-1].\] Here $\fg_\cZ$ and $\fg^*_\cZ$ denote the trivial dg-modules $\cin(\cZ) \otimes \fg$ and $\cin(\cZ) \otimes \fg^*$, respectively, while $\iota^*_\cZ T_M$ and $\iota^*_\cZ T^*_M$ denote the pullbacks of the module of vector fields $T_M$ and the module of one-forms $T^*_M$ on $M$ along the inclusion $\iota_\cZ \colon \cZ \to M$ (see Notation~\ref{not:pullback}). 

Given an element $X \in \fg$ we will denote by $dX\in T^*_{\cZ, -1}$ the one-form of derived degree $-1$  which is the image of $X$ under the exterior differential $d \colon \cin(\cZ)\to T^*_\cZ$. With this notation, the cotangent module $T^*_\cZ$ is generated over the dg-algebra of functions $\cin(\cZ)$ by smooth one-forms $\theta \in T^*_M$ and derived degree -1 one-forms $dX$ for $X\in \fg$. 

Given a covector $\sigma \in \fg^*$ we will denote by $\iota_\sigma \in T_{\cZ , 1}$ the degree 1 vector field on $\cZ$ given by contraction with $\sigma$. In particular $\iota_\sigma$ acts on derived degree -1 elements $X \in \fg \subseteq \cin(\cZ)_{-1}$ by $\iota_\sigma(X) = \langle \sigma, X \rangle$. The dg-module $T_{\cZ}$ is generated over the dg-algebra $\cin(\cZ)$ by smooth vector fields $W \in T_M$ and derived degree 1 vector fields $\iota_\sigma$ for $\sigma \in \fg^*$.
\end{notation}
\begin{proposition}
    \label{prop:algebroid}
    The underlying dg-$\cin(\cZ)$-module of the dg-Lie algebroid of the action groupoid $\cZ/G$ is isomorphic to $\cin(\cZ) \otimes \fg$. The anchor map \begin{equation} \label{eq:anchor} \rho \colon \cin(\cZ) \otimes \fg \to T_{\cZ} = \iota^*_\cZ T_M \oplus \fg^*_\cZ[-1] \end{equation} is given by \[\rho = \rho_0 + \eta,\] where \begin{equation} \label{eq:anchor_zero} \rho_0 \colon \cin(\cZ) \otimes \fg \to \iota^*_\cZ T_M\end{equation} is the infinitesimal action $\fg \to T_M$ pulled back along the inclusion $\iota_\cZ \colon \cZ \to M$, and \begin{equation} \label{eq:eta} \eta \colon \cin(\cZ) \otimes \fg \to \fg^*_\cZ[-1]\end{equation} is a chain homotopy between $-d\mu \circ \rho_0$ and $0$ given by $\eta(1\otimes X) = \mathrm{ad}^*_X$ and extended $\cin(\cZ)-linearly$, where $\mathrm{ad}^*_X$ is viewed as an element of the space $\fg \otimes \fg^*\subseteq \cin(\cZ)_{-1} \otimes \fg^*$.
\end{proposition}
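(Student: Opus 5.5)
Our plan is to compute $A = \mathrm{Der}^s_u(\cin(G\times\cZ), \cin(\cZ))$ directly from Construction~\ref{ctr:dgalgebroid}, using that $\cin(G\times\cZ)$ is generated over $\cin(G\times M)$ by degree $-1$ generators $X\in\fg$ (pulled back along $s = \mathrm{pr}_\cZ$). A $u$-derivation $\alpha$ vanishing on $s^*\cin(\cZ)$ kills $\cin(M)$ and the generators $X \in \fg$, so it is determined by its restriction to $\cin(G)$. That restriction is a derivation of $\cin(G)$ along the inclusion $e\colon \mathrm{pt}\to G$ with values in $\cin(\cZ)$. Such derivations form $\cin(\cZ)\otimes T_eG = \cin(\cZ)\otimes\fg$, by the universal property of $T^*G$ (Remark~\ref{rmk:manifold_kahler}) applied degreewise. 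Conversely, each $c\otimes\xi$ gives a $u$-derivation $f\mapsto c\cdot u^*(\xi^{L}f)$, where $\xi^L$ is the vector field on $G\times\cZ$ differentiating the $G$-factor and killing $\cin(\cZ)$-generators. This yields the isomorphism $A\cong \cin(\cZ)\otimes\fg$ of graded modules. For the differentials, note that $\delta_{G\times\cZ}$ kills $\cin(G)$ and $\alpha$ kills $\delta(X)=\mu^X\in\cin(M)$. Hence $\delta_A(\alpha)$ agrees with $\delta_\cZ$ applied to the coefficients, so the isomorphism is one of dg-modules.

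Next we compute the anchor $\rho(\alpha)(f)=\alpha(t^*f)=\alpha(a^*f)$ on generators of $\cin(\cZ)$, for $\alpha$ corresponding to $1\otimes\xi$. For $f\in\cin(M)$, $a^*f(g,m)=f(gm)$, and differentiating in $g$ at $e$ gives $\xi^\sharp f$. This is the infinitesimal action, i.e.\ $\rho_0$, up to the sign convention for $\xi^\sharp$, which should be matched with the paper's. For a generator $Y\in\fg$ we have $a^*(Y)(g,m)=\mathrm{Ad}_{g^{-1}}Y$, whose derivative at $e$ is $-[\xi,Y]$, a degree $-1$ element. Thus $\rho(\xi)$ sends $Y\mapsto -[\xi,Y]$. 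Under the identification of Notation~\ref{not:dg_alg_not}, this is the degree $0$ vector field $\sum_i \langle \mathrm{ad}^*\text{-term}\rangle\,\iota_{\sigma_i}$ with coefficients in $\fg\subseteq\cin(\cZ)_{-1}$. That is exactly the component in $\fg^*_\cZ[-1]$, namely $\eta(1\otimes\xi)=\mathrm{ad}^*_\xi\in\fg\otimes\fg^*$. Since $\rho$ and $\rho_0+\eta$ are both $\cin(\cZ)$-linear and agree on $1\otimes\xi$, they coincide.

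Finally we verify the homotopy claim. Since $\rho$ is a chain map into the cocone $T_\cZ$, with differential matrix as in Definition~\ref{def:cone_cocone}, the off-diagonal component of $\delta_{T}\rho=\rho\delta_A$ reads $-d\mu\circ\rho_0 - \delta\eta - \eta\delta = 0$ up to the sign conventions of the shift. So $\eta$ is a homotopy between $-d\mu\circ\rho_0$ and $0$. Alternatively, this can be checked directly on $1\otimes\xi$: $\iota_\mu(\mathrm{ad}^*_\xi)$ is the function $m\mapsto \mathrm{ad}^*_\xi\mu(m)$, which by equivariance of $\mu$ equals $d\mu(\xi^\sharp)$ up to sign.

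We expect the main obstacle to be bookkeeping of signs and conventions. These include the sign of the infinitesimal action versus $\mathrm{Ad}_{g^{-1}}$, the shift signs in $\fg^*_\cZ[-1]$ and the cocone differential, and the identification of $T_\cZ$ with derivations. The structural step, identifying $A$ with $\cin(\cZ)\otimes\fg$, is straightforward once generators are used.
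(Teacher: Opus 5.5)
Your proposal is correct and follows essentially the paper's route. You restrict $u$-derivations to degree zero, read off $\delta_A$ as $\iota_\mu$ on coefficients, and differentiate $t^*f$ and $t^*Y=\mathrm{Ad}_{g^{-1}}Y$ at the identity to get $\rho_0$ and $Y\mapsto -[\xi,Y]$, i.e.\ $\eta(1\otimes\xi)=\mathrm{ad}^*_\xi$. You push the restriction further, to point derivations of $\cin(G)$ at $e$. That is fine once you add the standard pointwise argument that a derivation of $\cin(G\times M)$ along $u$ is given by tangent vectors at $(e,m)$; the paper glosses this step the same way via Remark~\ref{rmk:smooth_algebroid}.

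The one real variation is the homotopy claim. You get it directly from $\rho$ being a chain map into $T_\cZ\cong\cocone(\iota^*_\cZ T_M\xrightarrow{-d\mu}\fg^*_\cZ)$, so equivariance of $\mu$ enters only through $a^*$ being a dg-map. The paper instead checks it on $1\otimes X$ via infinitesimal equivariance of $\mu$, which is your stated alternative.
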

\begin{proof}
\begin{itemize}[leftmargin=*]
\item \emph{Computing the Lie algebroid module $A$}. In order to show that $A \cong \cin(\cZ) \otimes \fg$, we first establish that any $u$-derivation $\alpha \in A$ is determined by its restriction to the subalgebra $\cin(G \times M) \subseteq \cin(G \times \cZ)$ of degree zero elements.

Since $\cin(G \times \cZ) = \cin(G \times M) \otimes \wedge^{-\bullet}\mathfrak{g}$, every function $f \in \cin(G \times \cZ)$ can be written as a finite sum
\[
    f = \sum_{I} f_I X_I, \qquad f_I \in \cin(G \times M),
    \quad X_I = X_{i_1} \wedge \cdots \wedge X_{i_{|I|}} \in \wedge^{|I|}\mathfrak{g},
\]
where the generators $X_i \in \mathfrak{g}$ have degree $-1$. The generators $X_I$ lie in the subspace $s^*\cin(\cZ)$ since the source map $s = \mathrm{pr}_\cZ$ acts as the identity on the $\mathfrak{g}$-part. By the Leibniz rule and the vanishing $\alpha(s^*\cin(\cZ)) = 0$,
\begin{equation} \label{eq:algebroid_derivations}
    \alpha(f_I X_I) = \alpha(f_I) \cdot u^*(X_I) = \alpha(f_I) \cdot X_I,
\end{equation}
so the derivation $\alpha$ is determined by its restriction $\alpha\vert_{\cin(G \times M)}$ to the subalgebra $\cin(G \times M) \subseteq \cin(G \times \cZ)$ in degree zero. On this subalgebra, $\alpha\vert_{\cin(G \times M)}$ is an ordinary $u$-derivation of $\cin(G \times M)$ with values in the dg-algebra $\cin(\cZ)$, vanishing on the subspace $1 \otimes \cin(M)$ and therefore it corresponds to a section of the smooth Lie algebroid $M \times \fg$ with values in $\cin(\cZ)$ (see Remark \ref{rmk:smooth_algebroid}).

Now, the module of sections of the trivial bundle $M \times \fg$ can be identified with $\cin(M) \otimes \fg$, and therefore \[A \cong (\cin(M)\otimes \fg) \otimes_{\cin(M)} \cin(\cZ) \cong \cin(\cZ) \otimes \fg.\]

Finally, we establish the differential $\delta_A$ on $A$. Using equation \eqref{eq:algebroid_derivations} we compute: \begin{align*}\delta_A(\alpha)(f_I X_I) &= \iota_\mu(\alpha(f_I) X_I) - (-1)^{|\alpha|} \alpha(f_I \iota_\mu(X_I))\\ &= (\iota_\mu \alpha(f_I))X_I + (-1)^{|\alpha|} \alpha(f_I) \iota_\mu(X_I) - (-1)^{|\alpha|} \alpha(f_I) \iota_\mu(X_I)\\ &= (\iota_\mu \alpha(f_I))X_I.\end{align*} Therefore, under the identification $A \cong \cin(\cZ) \otimes \fg$, the differential $\delta_A$ is just $\iota_\mu$ on $\cin(\cZ)$ trivially extended to $\cin(\cZ) \otimes \fg$. 

\item \emph{Computing the anchor map $\rho \colon A \to T_\cZ$}. We compute the anchor map $\rho \colon \cin(\cZ) \otimes \fg \to T_\cZ$ by evaluating it on generators of the dg-$\cin(\cZ)$-module $\cin(\cZ) \otimes \fg$. Let $\{\overline{E}_i\}$ be a left-invariant frame on the Lie group $G$ so that $\{E_i = \overline{E}_i(e)\}$ is a basis of the Lie algebra $\fg$, and $\{\sigma_i\}$ is the corresponding dual basis of $\fg^*$. Since the anchor $\rho$ is a $\cin(\cZ)$-module map, it is determined by its values on elements of the form $1 \otimes E_i$.

Under the correspondence $A \cong \cin(\cZ) \otimes \fg$ established in the first part of the proof, the element $1 \otimes E_i \in \cin(\cZ) \otimes \fg$ corresponds to the $u$-derivation $\alpha_i \in A$ given on functions $f \in \cin(G \times \cZ)$ by
\[ \alpha_i(f) \;=\; (\overline{E}_i \otimes \mathrm{id})(f) \big|_\cZ, \]
i.e. the action of the vector field $\overline{E}_i \otimes \mathrm{id} \in T_{G \times \cZ}$ on $f$, restricted along the unit $u \colon \cZ \to G \times \cZ$. (One verifies directly that $\alpha_i$ is a $u$-derivation vanishing on the subspace $s^*\cin(\cZ)$, and that on the degree-zero subalgebra $\cin(G \times M)$ it recovers the section $1 \otimes E_i$ of the smooth Lie algebroid $M \times \fg$.)

The anchor sends the $u$-derivation $\alpha_i$ to the derivation $\rho(1 \otimes E_i) \in T_\cZ$ defined by
\[ \rho(1 \otimes E_i)(f) \;=\; \alpha_i(t^*f) \;=\; (\overline{E}_i \otimes \mathrm{id})(t^*f) \big|_\cZ, \]
for $f \in \cin(\cZ)$. We compute the action of $\rho(1\otimes E_i)$ on generators of the dg-algebra $\cin(\cZ)$ of degree 0 and -1 separately.

For degree 0 generators $f \in \cin(M) = \cin(\cZ)_0$, we have $t^*f(g, m) = f(g m)$, so we recover the smooth case:
\[ \rho(1 \otimes E_i)(f) \;=\; (\overline{E}_i \otimes \mathrm{id})(t^*f) \big|_\cZ \;=\; \rho_0(1 \otimes E_i)(f), \]
where $\rho_0 \colon \cin(\cZ) \otimes \fg \to \iota_\cZ^* T_M \subseteq T_\cZ$ is the infinitesimal action of the Lie algebra $\fg$ on the module of smooth vector fields $T_M$ on $M$, extended $\cin(\cZ)$-linearly.

For degree $-1$ generators $E_j \in \fg \subseteq \cin(\cZ)_{-1}$, write the adjoint action as \[\mathrm{Ad}_g = \sum_{k,l} c_{kl}(g) E_k \otimes \sigma_l,\] so that
\[
    t^* E_j \;=\; \sum_k c_{kj}(g^{-1}) \, E_k
\]
as an element of $\cin(G) \otimes \fg \subseteq \cin(G \times \cZ)_{-1}$. Since the vector field $\overline{E}_i \otimes \mathrm{id}$ acts trivially on the $\fg$-part of the dg-algebra $\cin(G \times \cZ) = \cin(G \times M) \otimes \wedge^{-\bullet}\fg$, we have:
\[ (\overline{E}_i \otimes \mathrm{id})(t^* E_j)
    \;=\; \sum_k \overline{E}_i\bigl(c_{kj}(g^{-1})\bigr) E_k.\]
Restricting along the unit map $u$ (i.e.\ evaluating at $g = e$) gives
\[\rho(1 \otimes E_i)(E_j) \;=\; \sum_k \bigl(-\overline{E}_i c_{kj}\bigr)(e) \, E_k. \] Notice that coefficients $\overline{E}_i c_{kj}(e)$ are exactly the structure constants $c^k_{ij}$ of the Lie algebra $\fg$. Putting together all contributions, we obtain
\[\rho(1\otimes E_i) \; = \; \rho_0(1\otimes E_i)  - \sum_{k,j} c^k_{ij} E_k \otimes \iota_{\sigma_j}.\] Under the identification $\fg \otimes \fg^* \cong \mathfrak{gl}(\fg^*)$ given by $(v \otimes \xi)(\sigma) =\langle \sigma, v \rangle \xi$, we get \[\rho(1\otimes E_i) \; = \; \rho_0(1\otimes E_i) + \mathrm{ad}^*_{E_i}\] where the endomorphism $\mathrm{ad}^*_{E_i}$ of the dual space $\fg^*$ is viewed as an element of the space $\fg \otimes \fg^* \subseteq \cin(\cZ)_{-1} \otimes \fg^* \subseteq T_{\cZ, 0}$. Therefore, we have established the formula \[\rho = \rho_0 + \eta\] with $\eta(X) = \mathrm{ad}^*_X$.

It remains to verify that the map $\eta$ is a chain homotopy between $-d\mu \circ \rho_0$ and $0$, i.e. that \[-d\mu \circ \rho_0 = \mathcal{L}_{\iota_\mu} \circ \eta +\eta \circ \delta_A.\] Notice that for $1 \otimes X \in \cin(\cZ) \otimes \fg$ we have $\delta_A(1 \otimes X) = 0$ and under our conventions $\mathcal{L}_{\iota_\mu}(\mathrm{ad}^*_X) = - \mathrm{ad}^*_X(\mu)$. Therefore, the homotopy equation reduces to
\[
    -\iota_{\rho_0(1 \otimes X)} \, d\mu = -\mathrm{ad}^*_X(\mu),
\]
which is the infinitesimal form of the $G$-equivariance of the moment map (see e.g.\ \cite[Lemma~5.2.5]{MCDUFF}).
\end{itemize}
\end{proof}

\begin{remark}
    \label{rmk:BRST}

    Any dg-Lie algebroid $A$ over a dg-manifold $\cX$ gives rise to a bigraded Chevalley--Eilenberg complex $\mathrm{CE}(A) = \wedge^\bullet_{\cin(\cX)} A^*$ (see \cite[Definition 4.1.16]{NUITEN}). In the case of the dg-Lie algebroid $A$ of the action groupoid $\cZ/G$, we have $A^* \cong \cin(\cZ) \otimes \fg^*$, so the Chevalley--Eilenberg complex $\mathrm{CE}(A)$ is isomorphic to the bigraded complex $\cin(\cZ) \otimes \wedge^\bullet \fg^* \cong \cin(M) \otimes \wedge^{-\bullet} \fg \otimes \wedge^\bullet \fg^*$ with the Koszul differential $\iota_\mu$ and the Chevalley--Eilenberg differential $d_{CE}$ corresponding to the Lie algebra $\fg$. This bigraded complex coincides with the classical BRST complex of the Hamiltonian $G$-space as defined by Kostant and Sternberg \cite{KOSTANTBRST}. In this sense, the BRST complex is the infinitesimal counterpart of the action groupoid $\cZ/G$. This point of view will be developed further in Remark~\ref{rmk:BRST_form}.
    
    The complex $\mathrm{CE}(A)$ can also be thought of as a dg NQ-manifold $(M, \wedge^\bullet A^*, d_{\mathrm{CE}})$ in the sense of Pridham \cite{PRIDHAM}. In the case of an algebroid of a Hamiltonian $G$-space $A \cong \cin(\cZ) \otimes \fg$, the corresponding dg NQ-manifold is what Pridham calls the derived infinitesimal quotient $\cZ/\fg$ (see \cite[Example 1.38]{PRIDHAM}).

\end{remark}

We will now discuss the $G$-equivariance of the anchor map $\rho$ of the dg-Lie algebroid of $\cZ/G$. This will be important later when we construct the tangent and cotangent complexes of $\cZ/G$.

\begin{construction}($G$-action)
    \label{ctr:gaction}
Let $a_g \colon \cZ \to \cZ$ denote the action of an element $g \in G$ on $\cZ$. Define the left action $\Phi_g = (a_{g^{-1}})^*$ on the dg-algebra $\cin(\cZ)$. It is given on degree 0 functions by $\Phi_g(f) = f \circ a_{g^{-1}}$ for $f \in \cin(M)$, and on degree $-1$ functions by $\Phi_g(X) = \mathrm{Ad}_{g} X$ for $X \in \fg$. 

The action $\Phi_g$ on $\cin(\cZ)$ induces an action of $G$ on the module of derivations $T_\cZ$ by conjugation: \[g \cdot D = \Phi_g \circ D \circ \Phi_{g^{-1}} = (a_{g^{-1}})^* \circ D \circ (a_g)^* \] for $D \in T_\cZ$. We evaluate this action on two type of generators from Notation \ref{not:dg_alg_not}. For a smooth vector field $W \in T_M$, we have \[
    (g \cdot W)(f) = W(f \circ a_g) \circ a_{g^{-1}} = g_* W(f), \] which is the standard pushforward action of $G$ on vector fields. For a degree 1 contraction $\iota_\sigma$ with $\sigma \in \fg^*$ and an element $X \in \fg \subseteq \cin(\cZ)_{-1}$, we have \[g\cdot \iota_\sigma(X) = \Phi_g(\iota_\sigma(\mathrm{Ad}_{g^{-1}} X)) = \langle \sigma, \mathrm{Ad}_{g^{-1}} X \rangle = \iota_{\mathrm{Ad}^*_{g} \sigma}(X)\] with the middle $\Phi_g$ acting trivially since the value is a scalar. Therefore, $g\cdot \iota_\sigma = \iota_{\mathrm{Ad}^*_{g} \sigma} \in \fg^*_\cZ[-1]$. To summarize, the action of $G$ on $T_\cZ$ is given by the pushforward action on the smooth vector fields in $\iota^*_\cZ T_M$ and the coadjoint action on the degree 1 contractions in $\fg^*_\cZ[-1]$.

Recall that the dg-Lie algebroid of $\cZ/G$ is by definition $A = \mathrm{Der}_u^s(\cin(G \times \cZ), \cin(\cZ))$. The dg-manifold $G \times \cZ$ carries a left $G$-action given by $C_g = (\mathrm{Ad}_g, a_g)$. With this action the source and target maps of $\cZ/G$ become $G$-equivariant, i.e. $s\circ C_g = a_g \circ s$ and $t \circ C_g = a_g \circ t$. Denote by $\Psi_g = (C_{g^{-1}})^*$ the induced action on the dg-algebra $\cin(G \times \cZ)$. Then we define a $G$-action on $A$ by \[g \cdot \alpha = \Phi_g \circ \alpha \circ \Psi_{g^{-1}} = (a_{g^{-1}})^* \circ \alpha \circ (C_g)^*.\] Notice that restricting this action to the degree zero component $\cin(M) \otimes \fg$ of $A$ recovers the standard adjoint action of the action groupoid $G \times M$ on its Lie algebroid $M \times \fg$. Therefore, for $1\otimes X \in \cin(\cZ) \otimes \fg$ we have $g \cdot (1\otimes X) = 1 \otimes \mathrm{Ad}_g X$. This uniquely determines the $G$-action on $A$ by $\cin(\cZ)$-linearity, i.e. $g \cdot (f \otimes X) = \Phi_g(f) \otimes \mathrm{Ad}_g X$ for $f \in \cin(\cZ)$ and $X \in \fg$.

\end{construction}

\begin{lemma}
    \label{lem:anchor_equivariant}
    The anchor map $\rho = A \to T_\cZ$ of the dg-Lie algebroid $A$ of $\cZ/G$ is $G$-equivariant with respect to the $G$-actions defined in Construction~\ref{ctr:gaction}.
\end{lemma}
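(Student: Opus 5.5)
We will prove $\rho(g\cdot\alpha) = g\cdot\rho(\alpha)$ for all $\alpha \in A$ and $g\in G$. There are two routes: a structural one straight from the definitions, and a generator check as a fallback. We do the structural argument first.

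\emph{Structural argument.} The anchor is defined by $\rho(\alpha)(f) = \alpha(t^*f)$. The action on $A$ is $g\cdot\alpha = (a_{g^{-1}})^*\circ\alpha\circ (C_g)^*$, and the action on $T_\cZ$ is $g\cdot D = (a_{g^{-1}})^*\circ D\circ (a_g)^*$. Equivariance of the target, $t\circ C_g = a_g\circ t$, gives $(C_g)^*\circ t^* = t^*\circ (a_g)^*$. Hence, for $f\in\cin(\cZ)$,
\[
\rho(g\cdot\alpha)(f) = (a_{g^{-1}})^*\,\alpha\bigl((C_g)^* t^* f\bigr) = (a_{g^{-1}})^*\,\alpha\bigl(t^*(a_g)^*f\bigr) = \bigl(g\cdot\rho(\alpha)\bigr)(f).
\]
Two points need checking for this to be legitimate.

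\begin{itemize}
\item \emph{The action on $A$ is well defined.} We must show that $g\cdot\alpha$ is again a $u$-derivation vanishing on $s^*\cin(\cZ)$. This uses $C_g\circ u = u\circ a_g$ (since $\mathrm{Ad}_g e = e$) and $s\circ C_g = a_g\circ s$. Both are short computations on the underlying maps and on the generators $X\in\fg$.
\item \emph{The equivariance identity holds as maps of dg-manifolds.} We must verify $t\circ C_g = a_g\circ t$ including the pullback on degree $-1$ generators. On $X\in\fg$ we have $t^*X(h,m) = \mathrm{Ad}_{h^{-1}}X$. Then $(C_g)^*t^*X$ evaluated at $(h,m)$ equals $\mathrm{Ad}_{(ghg^{-1})^{-1}}$ applied to the degree $-1$ pullback of $X$ under $a_g$, namely $\mathrm{Ad}_{g^{-1}}X$. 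This gives $\mathrm{Ad}_{g h^{-1} g^{-1}}\mathrm{Ad}_{g^{-1}}X$. On the other side, $t^*(a_g)^*X = t^*(\mathrm{Ad}_{g^{-1}}X)$, which at $(h,m)$ is $\mathrm{Ad}_{h^{-1}}\mathrm{Ad}_{g^{-1}}X$.
\end{itemize}

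These two expressions disagree, so sign and inversion conventions must be tracked carefully. This bookkeeping is the expected main obstacle. If the action map on $G\times\cZ$ turns out to need the form $C_g(h,z) = (ghg^{-1}, a_g z)$ with pullbacks arranged so that the group law $a(h, a_g z) = a_g\, a(\mathrm{Ad}_{g^{-1}}\ldots)$ matches, we will recheck the composite using the action axiom of Construction~\ref{ctr:actiongroupoid}. The key fact needed is $a\circ(\mathrm{Ad}_g\times a_g) = a_g\circ a$, which follows from the action diagram applied to $ghg^{-1}\cdot g z = g\cdot h z$.

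\emph{Fallback generator check.} If the conventions above resist, we use the explicit formula $\rho = \rho_0+\eta$ from Proposition~\ref{prop:algebroid}. Both $G$-actions are compatible with the module structures, in the sense that $g\cdot(fD) = \Phi_g(f)\,(g\cdot D)$. It therefore suffices to check equivariance on the generators $1\otimes X$, where we need
\[
\rho_0(\mathrm{Ad}_gX) + \mathrm{ad}^*_{\mathrm{Ad}_gX} = g_*\rho_0(X) + g\cdot \mathrm{ad}^*_X .
\]
The $\rho_0$ terms match by the classical identity $(\mathrm{Ad}_gX)^\sharp = g_*X^\sharp$.

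For the $\eta$ term, $\mathrm{ad}^*_X \in \fg\otimes\fg^*$ is built from a degree $-1$ function factor and a contraction factor. Under the action, the function factor transforms by $\mathrm{Ad}_g$ and the contraction factor by $\mathrm{Ad}^*_g$, using Construction~\ref{ctr:gaction}. The claim then reduces to
\[
\mathrm{Ad}^*_g\circ \mathrm{ad}^*_X\circ \mathrm{Ad}^*_{g^{-1}} = \mathrm{ad}^*_{\mathrm{Ad}_gX},
\]
which is the standard identity $\mathrm{Ad}_g\,\mathrm{ad}_X\,\mathrm{Ad}_{g^{-1}} = \mathrm{ad}_{\mathrm{Ad}_gX}$ dualized.
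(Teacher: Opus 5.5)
Your structural argument is the paper's proof: from $t\circ C_g = a_g\circ t$, i.e.\ $\Psi_{g^{-1}}\circ t^* = t^*\circ\Phi_{g^{-1}}$, you get $\rho(g\cdot\alpha) = \Phi_g\circ\alpha\circ\Psi_{g^{-1}}\circ t^* = \Phi_g\circ\alpha\circ t^*\circ\Phi_{g^{-1}} = g\cdot\rho(\alpha)$.

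The ``disagreement'' you found on degree $-1$ generators is an order-of-composition slip, not a convention problem. Since $(C_g)^*$ is an algebra map, write $t^*X=\sum_k c_k E_k$ with $c_k\in\cin(G\times M)$. Then $(C_g)^*t^*X=\sum_k (c_k\circ C_g)\,\mathrm{Ad}_{g^{-1}}E_k$. Its value at $(h,m)$ is $\mathrm{Ad}_{g^{-1}}\mathrm{Ad}_{gh^{-1}g^{-1}}X=\mathrm{Ad}_{h^{-1}g^{-1}}X=\mathrm{Ad}_{h^{-1}}\mathrm{Ad}_{g^{-1}}X=(t^*(a_g)^*X)(h,m)$.

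So the equivariance of $t$ holds on the nose, as your derivation from the action axioms also shows. There is no obstacle, and the fallback generator check, while correct, is not needed.
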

\begin{proof}
    Recall that the anchor map $\rho$ is defined by $\rho(\alpha)(f) = \alpha(t^*f)$ for $\alpha \in A$ and $f \in \cin(\cZ)$. Therefore using the $G$-equivariance of the target map $t$ we obtain: \[\rho(g\cdot \alpha) = (g \cdot \alpha)\circ t^* = \Phi_g \circ \alpha \circ \Psi_{g^{-1}} \circ t^* = \Phi_g \circ \alpha \circ t^* \circ \Phi_{g^{-1}} = g \cdot \rho(\alpha).\] 
\end{proof}

\section{Bott--Shulman complex and the reduced form} \label{sec:bottshulman}

In the smooth case, to any simplicial manifold $M_\bullet$ one can associate a double complex $\Omega^{\bullet}(M_\bullet)$, called the Bott--Shulman complex, whose total cohomology computes the de Rham cohomology of the geometric realization of $M_\bullet$. We will now define a dg-version of the Bott--Shulman complex and introduce the reduced form $\omega_{\mathrm{red}}$ as an element of this complex. In its essence, the construction of the Bott--Shulman complex is a particular case of Dold--Kan correspondence applied to the de Rham complex.

\begin{construction}[Unnormalized Dold--Kan functor for cosimplicial objects] \label{ctr:doldkan}
    Let $\mathcal{A}$ be an abelian category. A \emph{cosimplicial object} in $\mathcal{A}$ is a functor $A^\bullet \colon \Delta \to \mathcal{A}$ from the simplex category $\Delta$ to $\mathcal{A}$. There is an unnormalized Dold--Kan functor \[ C \colon \catname{CoSimp}(\mathcal{A}) \to \catname{CoCh}_{\geq 0}(\mathcal{A}) \] from the category of cosimplicial objects of the category $\mathcal{A}$ to the category of non-negatively graded cochain complexes in $\mathcal{A}$. On objects, the functor $C$ is given by $C(A^\bullet)^n = A^n$ and the differential is given by the alternating sum of coface maps (see \cite[§8.4]{WEIBEL}).
\end{construction}

\begin{construction} \label{ctr:bottshulman}
In Appendix \ref{sec:derham} we showed that the de Rham complex of a dg-manifold gives a functor \[\Omega^{\bullet}_{\bullet} \colon \catname{dgMan}^{\mathrm{op}} \to \catname{DoubleComp}(\catname{Vec})\] to the category of double complexes in vector spaces. Suppose we are given a simplicial dg-manifold $X_\bullet \colon \Delta^{\mathrm{op}} \to \catname{dgMan}$. Then, composing the functor $X_\bullet$ with the de Rham functor, we obtain a cosimplicial object in the category of double complexes \[\Omega^{\bullet}_{\bullet} \circ X_\bullet^{\mathrm{op}} \colon \Delta \to \catname{DoubleComp}(\catname{Vec}).\] Applying the unnormalized Dold--Kan functor (see Construction~\ref{ctr:doldkan}) we obtain a cochain complex in double complexes, i.e. a triple complex in vector spaces. Explicitly, we get a triple complex $\Omega^{\bullet}_{\cin(X_\bullet), \bullet}$, with differentials $d$, $\delta$ and $\partial^*$ coming from the de Rham differential, the internal differential of the dg-manifolds and the alternating sum of pullbacks along face maps respectively.
\end{construction}

\begin{definition} \label{def:bottshulman}
Given a dg-groupoid $\mathcal{G}$, we can apply Construction~\ref{ctr:bottshulman} to the nerve $N_\bullet \mathcal{G}$ and obtain a triple \emph{Bott--Shulman complex} $(\Omega^{\bullet}_{\cin(N_\bullet \mathcal{G}), \bullet}, d, \delta, \partial^*)$. We define the corresponding \emph{de Rham complex} as the total complex \begin{equation} \label{eq:derham} \mathrm{DR}^m(\mathcal{G}) = \prod_{i+j+k = m} \Omega^i_{C^{\infty}(\mathcal{G}^{(j)}), k}.\end{equation}
\end{definition}

\begin{remark}[Regular case]
    \label{rmk:regular_reduction}
    When $0$ is a regular value of the moment map $\mu\colon M \to \fg^*$, the preimage $Z = \mu^{-1}(0)$ is a smooth submanifold of the manifold $M$ and the action Lie groupoid $G \times Z \rightrightarrows Z$ models the quotient $Z/G$. The restricted form $\iota^*_Z \omega\in \Omega^2(Z)$ satisfies $(s^* - t^*)\iota^*_Z \omega = 0$ with respect to the source and target maps. This equation exactly implies that the restricted form $\iota^*_Z \omega$ descends to the reduced space $Z/G$.

    In the derived setting, the analogous equation fails on the nose: for the action groupoid $G \times \cZ \rightrightarrows \cZ$, the restricted form $\iota^*_\cZ \omega$ no longer satisfies $(s^* - t^*)\iota^*_\cZ \omega = 0$. However, it satisfies this equation \emph{up to homotopy} in the Bott--Shulman complex: there exists a form $h \in \Omega^2_{\cin(G \times \cZ), -1}$ of simplicial degree $1$ and derived degree $-1$ with
    \[
        \iota_\mu(h) \;=\; (s^* - t^*)\iota^*_\cZ \omega.
    \]
    In the next construction we identify this homotopy explicitly as $h = d\theta$, where the form $\theta$ is the Maurer--Cartan form on $G$.
\end{remark}

\begin{construction}[The reduced form]
    \label{ctr:reducedform}
    We now define the 2-form on the groupoid $\cZ/G$ (see Construction~\ref{ctr:actiongroupoid}) that will play the role of the symplectic form on the quotient. The Bott--Shulman complex of the action dg-groupoid $\cZ/G$ in form degree $p$, simplicial degree $r$ and derived degree $q$ has the form \[\Omega^p_{\cin(G^r\times \cZ), q} \cong \bigoplus_{k+l = p}\left(\wedge^{q - k} \fg \otimes_\R \left(  \Omega^l(G^r\times M) \otimes_\R S^k \fg\right)\right)\] with differentials $d$, $\delta = \iota_\mu$ and $\partial^*$. If we choose local coordinates $\{x_i\}$ on the manifold  $G^r\times M$ and a basis $\{E_i\}$ of the Lie algebra $\fg$, then an element of $\Omega^p_{\cin(G^r\times \cZ), q}$ can be written as a sum of terms of the form \[(fE_{m_1} \wedge \cdots \wedge E_{m_{q-k}}) \, dx_{i_1} \wedge \cdots \wedge dx_{i_l} dE_{j_1} \cdots dE_{j_k}\] where $f$ is a smooth function on $G^r\times M$ and elements $E_{m_1}, \ldots, E_{m_{q-k}}$ are of derived degree $-1$, while $dx_{i_1}, \ldots, dx_{i_l}$ are of derived degree $0$ and form degree $1$ and $dE_{j_1}, \ldots, dE_{j_k}$ are of derived degree $-1$ and form degree $1$. The internal differential $\delta = \iota_\mu$ acts on derived degree 0 forms by $\iota_\mu(dx_i) = 0$ and on derived degree -1 forms by $\iota_\mu(dE_j) = d \langle \mu, E_j \rangle$. Then it extends to the whole Bott--Shulman complex as a derivation of derived degree 1 and form degree 0.

    The simplicial differential $\partial^*$ is given by the alternating sum of pullbacks along face maps. In particular, in simplicial degree 0 we have $\partial^* = s^* - t^* \colon \Omega^p_{\cin(\cZ)} \to \Omega^p_{\cin(G \times \cZ)}$. In simplicial degree 1 we have $\partial^* = \partial_0^* - \partial_1^* + \partial_2^* \colon \Omega^p_{\cin(G \times \cZ)} \to \Omega^p_{\cin(G\times G\times \cZ)}$, where $\partial_0, \partial_1, \partial_2$ are the face maps of the nerve of the groupoid $\cZ/G$.

    The reduced form $\omega_{\mathrm{red}} \in \textrm{DR}^2(\cZ/G)$ consists of two components. For the first component we take the original symplectic form $\omega \in \Omega^2(M)$ and pull it back to $\iota^*_\cZ \omega \in\Omega^2_{\cin(\cZ), 0} \subseteq \textrm{DR}^2(\cZ/G)$ using the inclusion $\iota_\cZ \colon \cZ \to M$. For the second component we define the one-form of derived degree -1 $\theta \in \Omega^1(G\times M) \otimes \fg \subseteq\Omega^1_{\cin(G \times \cZ), -1} \subseteq \textrm{DR}^1(\cZ/G)$ as the standard (left) Maurer--Cartan form on $G$ pulled back to $G \times M$. We set \[\omega_{\mathrm{red}} = \iota^*_\cZ \omega + d\theta.\] Notice that the exterior differential of the Maurer--Cartan form $\theta$ is taken in the derived sense, so if we write $\theta = \sum \theta_i \otimes E_i$ in some basis $\{E_i\}$ of the Lie algebra $\fg$, then $d\theta = \sum d \theta_i \otimes E_i + \theta_i \otimes dE_i \in \Omega^2_{C^{\infty}(G\times \cZ),-1}\subseteq \mathrm{DR}^2(\cZ/G)$.
\end{construction}

In order to show that the reduced form $\omega_{\mathrm{red}}$ is a closed form in the Bott--Shulman complex, we will need the following lemma.
\begin{lemma}
    \label{lem:theta_mult}
    The two-form $d\theta$ is multiplicative, i.e. \[\partial^* d\theta = 0.\]
\end{lemma}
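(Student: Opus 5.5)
Since the face maps $\partial_i$ of the nerve are morphisms of dg-manifolds and pullback of forms commutes with the de Rham differential (Proposition \ref{prop:pullback_derham}), we have $\partial^* d = d \partial^*$. It therefore suffices to show that $\partial^*\theta = \partial_0^*\theta - \partial_1^*\theta + \partial_2^*\theta$ is $d$-exact, or even zero, in $\Omega^1_{\cin(G\times G\times \cZ),-1}$. Then $\partial^* d\theta = d\partial^*\theta$ vanishes.

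\textbf{Step 1: identify the face maps.} Using Construction~\ref{ctr:actiongroupoid_nerve}, the nerve is $N_2 \cong G\times G\times \cZ$. On a point $(g_1,g_2,z)$ the faces act by $\partial_0 = (g_2,z)$, $\partial_1 = (g_1g_2, z)$ and $\partial_2 = (g_1, a(g_2,z))$. The form $\theta$ lives in $\Omega^1(G\times M)\otimes \fg$, where the $\fg$-factor is a degree $-1$ \emph{function} on $\cZ$. Its pullback therefore involves two things: the Maurer--Cartan part $\theta_G$ on the $G$-coordinate, and the pullback of the generators $E_i\in\cin(\cZ)_{-1}$ along the $\cZ$-component of the face map. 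For $\partial_0$ and $\partial_1$ this $\cZ$-component is the projection, so the generators pull back to themselves. For $\partial_2$ it is the action, so by Construction~\ref{ctr:actiongroupoid} we have $a^*(X) = \mathrm{Ad}_{g_2^{-1}}X$. We interpret $\theta$ as the pairing of the $\fg$-valued Maurer--Cartan form with the $\fg$-valued function $\Xi=\sum E_i\otimes\sigma_i$, i.e.\ $\theta = \langle \theta_G, \Xi\rangle$ after identifying the two $\fg$'s via the basis.

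\textbf{Step 2: use the cocycle identity of the Maurer--Cartan form.} The left Maurer--Cartan form satisfies
\[ m^*\theta_G = \mathrm{Ad}_{g_2^{-1}}\mathrm{pr}_1^*\theta_G + \mathrm{pr}_2^*\theta_G . \]
Substituting this, the three pullbacks are:
\begin{itemize}
\item $\partial_0^*\theta = \langle \theta_G(g_2), \Xi\rangle$;
\item $\partial_1^*\theta = \langle \mathrm{Ad}_{g_2^{-1}}\theta_G(g_1) + \theta_G(g_2), \Xi\rangle$;
\item $\partial_2^*\theta = \langle \theta_G(g_1), \mathrm{Ad}_{g_2^{-1}}\Xi\rangle$.
\end{itemize}
The alternating sum cancels, provided the pairing is arranged so that $\langle \mathrm{Ad}_{g^{-1}} u, \Xi\rangle$ matches $\langle u, \mathrm{Ad}_{g^{-1}}\Xi\rangle$. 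This is where we must be careful that $\theta$ pairs the $\fg$-valued Maurer--Cartan form with the $\fg$-generators in a consistent, $\mathrm{Ad}$-compatible way. Writing $\theta=\sum\theta_i\otimes E_i$ in the dual bases, the correct invariant expression is $\sum_i \theta_i E_i$ with $\theta_i=\langle\sigma_i,\theta_G\rangle$. Under $\mathrm{Ad}_{g^{-1}}$ acting simultaneously on both slots, this sum transforms as the identity tensor in $\fg^*\otimes\fg$, which is invariant. If the tensor turns out to be transformed on only one side, a sign or transpose (coadjoint versus adjoint) issue would appear. In that case I would recheck the convention $a^*(X)=\mathrm{Ad}_{g^{-1}}X$ against the left Maurer--Cartan transformation law.

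\textbf{Step 3: conclude.} Once $\partial^*\theta = 0$ holds in $\Omega^1_{\cin(G^2\times\cZ),-1}$, commuting $d$ with $\partial^*$ gives $\partial^*d\theta = 0$.

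The main obstacle is Step 2. It requires bookkeeping of the adjoint factors and verifying that the derived-degree generators transform contragrediently to the Maurer--Cartan coefficients, so that the tensor $\sum\theta_i E_i$ is $\mathrm{Ad}$-invariant. This check uses the explicit formula for $a^*$ together with the left Maurer--Cartan identity. A direct coordinate computation at the identity, followed by left-translation, is the fallback if the invariant argument becomes confusing.
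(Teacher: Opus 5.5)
Your proposal is correct and follows the paper's proof essentially step for step. The paper also proves the Maurer--Cartan identity $m^*\theta = \mathrm{Ad}_{g_2^{-1}}\mathrm{pr}_1^*\theta + \mathrm{pr}_2^*\theta$, reads off $\partial_1^*\theta = \partial_0^*\theta + \partial_2^*\theta$ from the componentwise pullbacks (using $a^*E_i = \mathrm{Ad}_{g_2^{-1}}E_i$ for the $\partial_2$ term), and then commutes $d$ with $\partial^*$. The adjoint-versus-coadjoint worry in your Step 2 does not arise: in components the identity reads $m^*\theta_k = \sum_i c_{ki}(g_2^{-1})\,\mathrm{pr}_1^*\theta_i + \mathrm{pr}_2^*\theta_k$, which matches $\partial_2^*\theta = \sum_{i,k} c_{ki}(g_2^{-1})\,\mathrm{pr}_1^*\theta_i\, E_k$ on the nose.
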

\begin{proof}
    Let $m\colon G\times G \to G$ be the multiplication map, and $\textrm{pr}_1,\textrm{pr}_2 \colon G\times G \to G$ be the natural projections. Then $\theta$ satisfies \begin{equation} \label{multiplicative} (m^*\theta)_{(g_1,g_2)} = \textrm{Ad}_{g_2^{-1}}\textrm{pr}_1^* \theta +  \textrm{pr}_2^* \theta. \end{equation} To see this we take $\overline{E}_1 , \overline{E}_2$ to be left invariant vector fields on the Lie group $G$ and denote $E_1 = \overline{E}_1(e), E_2 = \overline{E}_2(e)$ their values at the identity. Then we compute:
    \begin{multline*}
        (m^*\theta)_{(g_1,g_2)}(\overline{E}_1, \overline{E}_2) = (\theta)_{g_1g_2}(R_{g_2}\overline{E}_1(g_1) + L_{g_1}\overline{E}_2(g_2)) \\  = E_2 + (\theta)_{g_1g_2}(L_{g_1g_2}\textrm{Ad}_{g_2^{-1}} E_1) = E_2 + \textrm{Ad}_{g_2^{-1}} E_1
    \end{multline*}

    Now consider the form $\theta$ as an element of $\Omega^1(G\times M) \otimes \fg \subseteq \Omega^1_{\cin(G \times M), -1}$. Recall that we have simplicial differentials $\partial_0, \partial_1, \partial_2 \colon G\times G \times \cZ \to G \times \cZ$, where $\partial_0$ is the projection onto the second factor, $\partial_1$ is the multiplication map, and $\partial_2$ is the action of the second factor on the dg-manifold $\cZ$. Equation~\eqref{multiplicative} implies that one-form $\theta$ is multiplicative. Indeed, if we write $\theta = \sum \theta_i \otimes E_i$ for some basis $E_i$ of $\fg$, then \begin{align*}
        \partial_0^* \theta &= \sum_i \textrm{pr}_2^*\theta_i \otimes E_i \\
        \partial_1^* \theta &= \sum_i m^*\theta_i \otimes E_i\\
        \partial_2^* \theta_{(g_1,g_2,m)} &= \sum_i \textrm{pr}_1^* \theta_i \otimes \textrm{Ad}_{g_2^{-1}} E_i.
    \end{align*}
    Therefore Equation~\eqref{multiplicative} is equivalent to $\partial_1^* \theta = \partial_0^* \theta + \partial_2^* \theta$, which is the multiplicativity condition for $\theta$. Since the exterior differential $d$ commutes with simplicial differential $\partial^*$, we get that the two-form $d\theta$ is multiplicative as well.
\end{proof}

\begin{proposition} \label{prop:closed}
    The form $\omega_{\mathrm{red}}$ is closed with respect to total differential $D = d + (-1)^p \iota_\mu +(-1)^{p+q} \partial^*$ (integers $p$ and $q$ denote the form degree and the derived degree, respectively) in the de Rham complex $\mathrm{DR}^2(\cZ/G)$.
\end{proposition}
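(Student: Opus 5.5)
Since the total differential $D$ is a sum of three commuting (up to the indicated signs) pieces, the plan is to decompose $D\omega_{\mathrm{red}}$ by tridegree (form degree $p$, simplicial degree $r$, derived degree $q$) and check that each homogeneous component vanishes. The form $\omega_{\mathrm{red}}$ has two pieces: $\iota^*_\cZ\omega$ in tridegree $(p,r,q)=(2,0,0)$ and $d\theta$ in tridegree $(2,1,-1)$. Applying $D$ produces components in the following tridegrees:
\begin{itemize}
\item $(3,0,0)$: the term $d\iota^*_\cZ\omega=\iota^*_\cZ d\omega=0$, since $\omega$ is closed and pullback commutes with $d$ (Proposition~\ref{prop:pullback_derham}).
\item $(2,0,1)$: the term $\iota_\mu\iota^*_\cZ\omega$, which vanishes because $\iota^*_\cZ\omega$ is built from derived-degree-$0$ generators $dx_i$ with $\iota_\mu(dx_i)=0$ and coefficients in $\cin(M)$; more conceptually, $\iota_\cZ$ is a dg-map and $\omega$ has trivial internal differential on $M$.
\item $(2,2,-1)$: the term $\pm\partial^*d\theta$, which vanishes by Lemma~\ref{lem:theta_mult}.
\item $(3,1,-1)$: the term $d\,d\theta=0$.
\item $(2,1,0)$: the only nontrivial cancellation, which must be the identity from Remark~\ref{rmk:regular_reduction}, namely $\iota_\mu(d\theta)=\pm(s^*-t^*)\iota^*_\cZ\omega$, with the signs fixed by $D=d+(-1)^p\iota_\mu+(-1)^{p+q}\partial^*$. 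Here $p=2,q=-1$ for $d\theta$ and $p=2,q=0$ for $\iota^*_\cZ\omega$, so the requirement is $\iota_\mu d\theta+(s^*-t^*)\iota^*_\cZ\omega=0$ (up to checking the sign convention for $\partial^*$ in simplicial degree $0$).
\end{itemize}

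To verify the $(2,1,0)$ identity, first use that $\iota_\mu$ commutes with $d$ on forms, since $d\circ\delta=\delta^1\circ d$ extends to the de Rham complex (Theorem~\ref{thm:derham_diff}). Thus $\iota_\mu d\theta=d\,\iota_\mu\theta$. Writing $\theta=\sum_i\theta_i\otimes E_i$, we have $\iota_\mu\theta=\pm\sum_i\theta_i\,\mu^{E_i}=\pm\langle\mu,\theta\rangle$, an ordinary $1$-form on $G\times M$ (pulled back to $G\times\cZ$). Hence the claim reduces to the classical identity on $G\times M$:
\[
d\langle \mu,\theta\rangle = \pm\,(\mathrm{pr}_M^*\omega - a^*\omega),
\]
where $s^*=\mathrm{pr}_M^*$ and $t^*=a^*$ on degree-zero forms.

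This is the standard computation that $a^*\omega-\mathrm{pr}_M^*\omega=-d\langle\mu,\theta\rangle$ for a Hamiltonian action. To prove it, decompose $T_{(g,m)}(G\times M)$ into left-invariant directions $\overline{E}$ and $M$-directions. The action map satisfies $da(\overline{E}_X,v)=dg\,(X^\sharp_m+v)$. Using $G$-invariance of $\omega$ one expands $a^*\omega$, and the mixed and $\fg\fg$ terms are identified via $\iota_{X^\sharp}\omega=d\mu^X$ and the infinitesimal equivariance $\omega(X^\sharp,Y^\sharp)=\mu^{[X,Y]}$ (up to sign). These match $d\langle\mu,\theta\rangle=\langle d\mu,\theta\rangle+\langle\mu,d\theta\rangle$, where the Maurer--Cartan equation $d\theta=-\tfrac12[\theta,\theta]$ on $G$ supplies the bracket term.

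The main obstacle is this last step: the sign bookkeeping. This includes the signs in the Koszul rule when $\iota_\mu$ passes through the degree-$1$ form $\theta_i$, the convention $\mathrm{Ad}_{g^{-1}}$ in $a^*$, left versus right Maurer--Cartan form, and the sign conventions for $\mu$ and $\partial^*$. I would fix conventions by checking the identity first at $g=e$, where $\theta$ is the identity on $\fg$, and then use left invariance together with equivariance of $\mu$ to propagate it to all $g$.
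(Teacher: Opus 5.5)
Your route is the paper's. You decompose by tridegree, dispose of the $d$- and $\partial^*$-components via $d\omega=0$, $dd\theta=0$ and Lemma~\ref{lem:theta_mult}, and reduce to the $(2,1,0)$ component. There you commute $\iota_\mu$ past $d$ and use the classical identity $(s^*-t^*)\omega = d\langle\mu,\theta\rangle$, whose sign you state correctly and which the paper verifies in a left-invariant frame just as you sketch. You also read off correctly that, with $D = d+(-1)^p\iota_\mu+(-1)^{p+q}\partial^*$, the requirement is $\iota_\mu d\theta + (s^*-t^*)\iota^*_\cZ\omega = 0$. There is no freedom in $\partial^*$, which the paper fixes as $s^*-t^*$ in simplicial degree $0$.

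The gap is the sign you leave open in $\iota_\mu\theta = \pm\langle\mu,\theta\rangle$. A check at $g=e$ cannot decide it, because it is a Koszul convention, and the statement itself forces it the wrong way. The factor $(-1)^p$ in $D$ gives $D^2=0$ only if $d$ and $\iota_\mu$ commute, which you also use. With $\iota_\mu(dX)=d\mu^X$, commuting is possible only under the bigraded rule in which $\iota_\mu$ passes an element with the sign $(-1)^q$ of its derived degree alone; this is the rule of Definition~\ref{def:dgmodule} and Proposition~\ref{prop:pullback_derham}. With total-degree signs, $d\iota_\mu-\iota_\mu d$ vanishes on $f$ and on $X$ but sends $fX$ to $2\mu^X df$, so $d$ and $\iota_\mu$ could at best anticommute and the stated $D$ would not square to zero. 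Since the $\theta_i$ have derived degree $0$, you get $\iota_\mu\theta = +\sum_i\theta_i\,\mu^{E_i} = +\langle\mu,\theta\rangle$. Hence $\iota_\mu d\theta = +(s^*-t^*)\iota^*_\cZ\omega$, and the $(2,1,0)$ component of $D\omega_{\mathrm{red}}$ equals $2(s^*-t^*)\iota^*_\cZ\omega = 2\,d\langle\mu,\theta\rangle$.

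This is nonzero in general, already in a regular case. Take $G=\R$ acting on $(\R^2,dx\wedge dy)$ by $x$-translations, with $\mu=y$ and $\theta=dt$: then $(s^*-t^*)\omega = dy\wedge dt = \iota_\mu d\theta$. The paper's proof has the same slip. It says one only needs $(s^*-t^*)\iota^*_\cZ\omega = \iota_\mu d\theta$, which is the identity that actually holds, but that is the opposite of what the stated $D$ requires; your bookkeeping of the requirement is the correct one.

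So the proposition needs a sign correction, and a complete proof must make that choice explicitly rather than carry a $\pm$. Your argument, and the paper's, goes through verbatim in either of two corrected settings:
\begin{itemize}
\item take $D = d+(-1)^p\iota_\mu-(-1)^{p+q}\partial^*$, which is still a differential; or
\item take $\omega_{\mathrm{red}} = \iota^*_\cZ\omega - d\theta$, which leaves Proposition~\ref{prop:red} intact since $u^*d\theta=0$.
\end{itemize}
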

\begin{proof}
    It is clear that the equation $d(\iota^*_\cZ \omega + d\theta) = 0$ is satisfied. From Lemma \ref{lem:theta_mult} we know that $\partial^*d\theta = 0$. Hence we only need to show that \[(s^* - t^*) \iota^*_\cZ \omega = \iota_{\mu} d\theta\]

    Note that the pullback form $\iota^*_\cZ \omega$ is just the ordinary symplectic form $\omega$ on the manifold $M$ sitting in $\Omega^2(M) = \Omega^2_{\cin(\cZ), 0}$ and pullbacks along the source and target maps $s^*$ and $t^*$ act on derived degree 0 forms just like smooth pullbacks. The computation then proceeds as follows. Let $\overline{E}_i$ be a left invariant frame of the Lie group $G$ and $E_i = \overline{E}_i(e)$ be the basis of the Lie algebra $\fg$. Then the differential of the action map $t \colon G\times M \to M$  is given by \[dt_{(g,m)}(\overline{E}_i, v) = (R_{g^{-1}} \overline{E}_i(g))^{\sharp}(gm)+ dg_m(v) = (\textrm{Ad}_{g} E_i)^{\sharp}(gm) + dg_m(v) = dg_m(E_i^{\sharp} + v).\]

    We can now compute 
    \begin{align*}
        (s^* - t^*)\omega_{(g,m)}(\overline{E}_i,\overline{E}_j,v_1,v_2) &=\omega_m(v_1,v_2) - \omega_{gm}(dg_m(E_i^{\sharp}+v_1),dg_m(E_j^{\sharp}+v_2)) \\ &= \omega_m(v_1, v_2) - \omega_m(E_i^{\sharp}+v_1, E_j^{\sharp}+v_2) \\ &= -(\omega_{m}(E_i^{\sharp},v_2)+\omega_{m}(E_i^{\sharp},E_j^{\sharp})+\omega_{m}(v_1,E_j^{\sharp})) \\ &= d_m\langle \mu,  E_j\rangle (v_1) - d_m\langle \mu,  E_i\rangle (v_2)  - \langle \mu, [E_i, E_j]\rangle(m) \\ &= d\langle \mu, \theta \rangle_{(g,m)}(\overline{E}_i, \overline{E}_j, v_1, v_2).
    \end{align*}
    Here in the second equality we used that the symplectic form $\omega$ is $G$-invariant. We get that $(s^* - t^*)\omega = d \langle \mu, \theta \rangle$. But since the inner differential $\iota_{\mu}$ commutes with the exterior differential $d$, we get that\[(s^* - t^*)\omega = d \langle \mu, \theta \rangle = d \iota_\mu(\theta) = \iota_\mu d\theta.\]

\end{proof}

The form $\omega_{\mathrm{red}}$ is reduced in the following sense. Let $\overline{\cZ}= \{ \cZ \rightrightarrows \cZ\}$ be the unitgroupoid. Then there is a morphism of dg-groupoids, the canonical atlas map $\pi \colon \overline{\cZ} \to \cZ/G $ given by $\pi = (u, \id)$, where $u \colon \cZ \to G\times \cZ$ is the unit map. Then one has the following result:

\begin{proposition} \label{prop:red}
    Let $\omega\vert_{\overline{\cZ}}$ denote the form $\iota^*_\cZ \omega$ viewed as an element of $\mathrm{DR}^2(\overline{\cZ})$. Then we have $\pi^*\omega_{\mathrm{red}} = \omega\vert_{\overline{\cZ}}$.
\end{proposition}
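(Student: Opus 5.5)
The plan is to compute $\pi^*$ componentwise in the Bott--Shulman triple complex. First we make explicit what the morphism of dg-groupoids $\pi = (u,\id)\colon \overline{\cZ} \to \cZ/G$ does on nerves. The nerve of the unit groupoid $\overline{\cZ}$ is the constant simplicial dg-manifold, $N_r(\overline{\cZ}) = \cZ$ for all $r$, with all face and degeneracy maps equal to the identity. Under the identification $N_r(\cZ/G) \cong G^r\times\cZ$ of Construction~\ref{ctr:actiongroupoid_nerve}, the map $N_r(\pi)\colon \cZ \to G^r \times \cZ$ is $(e,\ldots,e,\id_\cZ)$, i.e. the inclusion at the identity in every $G$-factor. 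In particular $N_0(\pi) = \id_\cZ$ and $N_1(\pi) = u$.

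These maps commute with the face maps, since $s\circ u = t\circ u = \id_\cZ$ and $m(e,e)=e$. Therefore, by functoriality of the de Rham double complex (Proposition \ref{prop:pullback_derham}) and of the Dold--Kan construction, $\pi$ induces a map of triple complexes. It acts in simplicial degree $r$ by $N_r(\pi)^*$ and thus gives a map $\pi^*\colon \mathrm{DR}^\bullet(\cZ/G) \to \mathrm{DR}^\bullet(\overline{\cZ})$ that preserves all three gradings.

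Applying this to $\omega_{\mathrm{red}} = \iota^*_\cZ\omega + d\theta$, the first component lies in simplicial degree $0$. There $\pi^* = N_0(\pi)^* = \id$, so it maps to $\iota^*_\cZ\omega$ viewed in $\mathrm{DR}^2(\overline{\cZ})$, which is exactly $\omega\vert_{\overline{\cZ}}$.

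It remains to show $u^*(d\theta) = 0$ in $\Omega^2_{\cin(\cZ),-1}$. Since $u^*$ commutes with $d$, we have $u^* d\theta = d(u^*\theta)$, so it suffices to show $u^*\theta = 0$. Write $\theta = \sum_i \theta_i\otimes E_i$ with $\theta_i \in \Omega^1(G\times M)$ pulled back from $G$. The pullback $u^*$ acts on the $\fg$-factor trivially: in the proof of Proposition \ref{prop:algebroid} we noted $u^*(X_I) = X_I$, as $s\circ u = \id$ and $s$ is the identity on the $\fg$-part. On the smooth factor, $u^*$ is the pullback along $M \to G\times M$, $m\mapsto (e,m)$. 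That map factors through the point $\{e\}\to G$, and the pullback of a $1$-form on $G$ along a point vanishes. Hence $u^*\theta_i = 0$ for each $i$, so $u^*\theta = 0$ and $u^*d\theta = 0$.

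The main point requiring care is that $u^*$ on K\"ahler forms of the dg-manifold really factors as "smooth pullback on $\Omega^\bullet(G\times M)$ tensor identity on the $\fg$-generators $E_i$ and $dE_i$". I would verify this on generators of $\Omega^1_{\cin(G\times\cZ)}$ using the description from Proposition \ref{prop:kahler}. The relevant checks are $u^*(dE_i) = d(u^*E_i) = dE_i$ and $u^*(dx) = d(u^*x)$ for coordinate functions $x$. Extending multiplicatively gives the factorization.
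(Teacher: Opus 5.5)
Your proposal is correct and follows the paper's argument. The simplicial-degree-$0$ component $\iota^*_\cZ\omega$ is unchanged because $N_0(\pi)=\id_\cZ$, so everything reduces to $u^*d\theta = d\,u^*\theta = 0$, which holds because each $\theta_i$ is pulled back from $G$ and $\mathrm{pr}_G\circ u$ factors through the point $\{e\}$.

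The factorization check in your last paragraph is harmless but unnecessary. Since $u^*$ is multiplicative on forms (Proposition~\ref{prop:pullback_derham}), $u^*\theta = \sum_i u^*(\theta_i)\,u^*(E_i) = 0$ regardless of what $u^*$ does to the $\fg$-generators.
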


\begin{proof}
  It is enough to show that $u^* d\theta = 0$. Write $\theta = \sum_i \theta_i \otimes E_i$, with one-forms $\theta_i \in \Omega^1(G)$. Then we have $u^*\theta_i = 0$ for all indices $i$, and so $u^* d\theta = du^* \theta = 0.$
\end{proof}

\section{The reduced form is symplectic} \label{sec:symplectic}

In this section we show that the reduced form $\omega_{\mathrm{red}}$ on the dg-groupoid $\cZ/G$ is non-degenerate in a suitable sense. To make this precise we need to introduce the notion of tangent and cotangent complexes of the dg-groupoid $\cZ/G$, which are the derived analogues of the tangent and cotangent bundles of a smooth manifold. 

\begin{notation}[Recap]
    Let us stop for a moment and recall what we had constructed so far, and the notation that we used. We fix a symplectic manifold $(M, \omega)$ with a Hamiltonian $G$-action and equivariant moment map $\mu \colon M \to \fg^*$. 

    We constructed derived zero-level set $\cZ$ with global dg-algebra of functions $\cin(\cZ) =\cin(M) \otimes \wedge^{-\bullet}\fg$ and differential $\iota_\mu$ (\eqref{eq:zerolevelset}). The dg-algebra $\cin(\cZ)$ is generated over $\cin(M)$ by elements $X \in \fg$ of derived degree $-1$. There is a natural embedding $\iota_\cZ \colon \cZ \to M$, which induces the inclusion $\iota^*_\cZ \colon \cin(M) \hookrightarrow \cin(\cZ)$ in degree 0 (see Notation~\ref{not:pullback}). We then constructed an action groupoid modelling the derived symplectic quotient $\cZ/G = \{G \times \cZ \rightrightarrows \cZ\}$ (see Construction~\ref{ctr:actiongroupoid}).
   
    We also introduced some dg-modules that we are going to use. Dg-modules \[\fg_\cZ = \cin(\cZ) \otimes \fg, \qquad \fg^*_\cZ = \cin(\cZ) \otimes \fg^*\] are trivial dg-modules over the dg-algebra $\cin(\cZ)$. They inherit an inner differential $\iota_\mu$ from $\cin(\cZ)$.
    Modules $\iota_\cZ^* T_M$ and $\iota_\cZ^* T^*_M$ are the pullbacks of the module of vector fields and module of one-forms of $M$ respectively to the derived zero set $\cZ$ (see Notation~\ref{not:pullback}). The pullback modules get their inner differential $\iota_\mu$ from the dg-algebra $\cin(\cZ)$ as well.

In Proposition~\ref{prop:kahler} we computed tangent and cotangent modules $T_\cZ$ and $T^*_\cZ$ of the dg-manifold $\cZ$ using the explicit cocone/cone descriptions
          \[
              T_\cZ \cong \cocone\bigl(\iota^*_\cZ T_M \xrightarrow{-d\mu} \fg^*_\cZ\bigr),
              \qquad
              T^*_\cZ \cong \cone\bigl(\fg_\cZ \xrightarrow{(d\mu)^*} \iota^*_\cZ T^*_M\bigr).
          \]
 Here the map $(d\mu)^*$ is defined by $(d\mu)^*(X) = d\langle \mu, X\rangle$ for $X \in \fg$ and extends by $\cin(\cZ)$-linearity, and the map $d\mu$ is defined dually by $\langle d\mu(V), X \rangle = \iota_V d\langle \mu, X\rangle$ for $V \in T_M$ and $X \in \fg$.

 Given $X \in \fg$ we write $dX \in T^*_\cZ$ for the derived one form degree $-1$, which is the image of $X$ under the de Rham differential $d$. One forms $dX$ in derived degree $-1$, together with smooth forms $\beta \in T^*_M$ generate $T^*_\cZ$ as a graded module over $\cin(\cZ)$.
 Similarly, given $\sigma \in \fg^*$ we write $\iota_\sigma \in T_\cZ$ for the corresponding contraction $\iota_\sigma \colon \cin(M) \otimes \wedge^\bullet \fg \to \cin(M) \otimes \wedge^{\bullet -1} \fg$ viewed as a degree 1 derivation on $\cin(\cZ)$ (see Notation~\ref{not:dg_alg_not}). Derivations $\iota_\sigma$ in derived degree $1$ and smooth vector fields $W \in T_M$ generate $T_\cZ$ as a graded module over $\cin(\cZ)$.

In Proposition~\ref{prop:algebroid} we computed the dg-Lie algebroid of the action dg-groupoid $\cZ/G$ to be $A \cong \fg_\cZ$. We also computed the anchor map to be $\rho = \rho_0 + \eta \colon \fg_\cZ \to T_\cZ$, where $\rho_0$ is the infinitesimal action $\fg \to T_M$ extended over $\cin(\cZ)$ to $\rho_0 \colon \fg_\cZ \to \iota^*_\cZ T_M$ and $\eta\colon \fg_\cZ \to \fg^*_\cZ[-1]$ is the homotopy given by the coadjoint action $\eta(X) = \mathrm{ad}^*_X$ for $X \in \fg\subseteq \fg_{\cZ, 0}$. Furthermore in Lemma~\ref{lem:anchor_equivariant} we showed that the anchor $\rho$ is a $G$-equivariant map of dg-modules.

Finally, in Section~\ref{sec:bottshulman} we constructed the reduced form $\omega_{\mathrm{red}} = \iota^*_\cZ \omega + d\theta$ of total degree 2 in the Bott--Shulman complex of the action groupoid $\cZ/G$, where $\theta$ is the Maurer--Cartan form on the Lie group $G$ pulled back to $G \times \cZ$ (Construction~\ref{ctr:reducedform}). We showed that it is closed in the Bott--Shulman complex (see Proposition~\ref{prop:closed}) and that it is reduced (see Proposition~\ref{prop:red}).
\end{notation}

\begin{construction}
\label{ctr:tangent_complex_action}
    The \emph{tangent complex} of the action groupoid $\cZ/G$ is the
    $G$-equivariant two-term complex of $\cin(\cZ)$-modules
    \[
        T_{\cZ/G} \;\colon\; \fg_\cZ \xrightarrow{\rho} T_\cZ,
    \]
    with the dg-$\cin(\cZ)$-module $\fg_\cZ$ in degree $-1$ and tangent module $T_\cZ$ in degree $0$, where $\rho = \rho_0 + \eta$ is the anchor map of the dg-Lie algebroid of $\cZ/G$ (Proposition~\ref{prop:algebroid}) and the action of $G$ on $T_{\cZ/G}$ is as in Lemma~\ref{ctr:gaction}.

    Dually, the \emph{cotangent complex} of $\cZ/G$  is the
    $G$-equivariant complex \[T^*_{\cZ/G} \;\colon\; T^*_\cZ \xrightarrow{\rho^*} \fg^*_\cZ,\]
    with the cotangent module $T^*_\cZ$ in degree 0 and $\fg^*_\cZ$ in degree 1. The cotangent complex $T^*_{\cZ/G}$ carries the dual $G$-action.
\end{construction}

\begin{remark}
    In the non-graded setting, there is a one-to-one correspondence between two-term complexes with representations up to homotopy and VB-groupoids (see \cite{VB}), which are vector bundles in the category of Lie groupoids. By design, VB-groupoids represent vector bundles over stacks. For example, the tangent Lie groupoid $TG \rightrightarrows TX$ (i.e., the tangent bundle of the stack $[X/G]$) corresponds to the adjoint representation up to homotopy on the tangent complex. Therefore, the representation up to homotopy is an essential structural component; without it, we cannot recover the corresponding vector bundles over the stack. 
    
    For $G$-action groupoids the representation up to homotopy structure on the complex is just an action of the group $G$. Therefore, since developing general theory of representations up to homotopy in the dg-setting is beyond the scope of this paper, we included the $G$-equivariance in the definition of tangent and cotangent complexes in Construction \ref{ctr:tangent_complex_action}.
\end{remark}

\begin{remark}[Regular case]
    Suppose, as in Remark~\ref{rmk:regular_reduction}, that $0$ is a regular value of the moment map $\mu$ and that the Lie group $G$ acts freely on the zero-level set $Z = \mu^{-1}(0)$, so that the reduced space $Z/G$ is a smooth manifold. The restricted form $\iota^*_Z\omega \in \Omega^2(Z)$ induces a $G$-equivariant map from the tangent complex of the action Lie groupoid $G \times Z \rightrightarrows Z$ to its cotangent complex:

    \begin{center}
\begin{tikzcd}
Z \times \mathfrak{g} \arrow[d, "0"] \arrow[r, "\rho"] & TZ \arrow[d, "(\iota^*_Z \omega)^{\flat}"] \arrow[r] & 0 \arrow[d, "0"]        \\
0 \arrow[r]                                            & T^*Z \arrow[r, "\rho^*"]                 & Z \times \mathfrak{g}^*
\end{tikzcd}
\end{center}
    where $\rho$ is the anchor map of the action Lie algebroid $Z \times \fg \to TZ$. In fact, this map of complexes is a quasi-isomorphism, i.e. the restricted form $\iota^*_Z \omega$ induces an isomorphism of vector bundles $\coker(\rho) \cong \ker(\rho^*)$ on $Z$, which is equivalent to the classical non-degeneracy of the reduced symplectic form on the manifold $Z/G$.

    In the derived setting, the analogous map between tangent and cotangent complexes $T_{\cZ/G}$ and $T^*_{\cZ/G}$ induced by the "restricted" form $\iota^*_\cZ \omega$ alone is no longer a chain map. In particular, for the anchor map $\rho \colon \fg_\cZ \to T_\cZ$, the composition $(\iota^*_\cZ \omega)^{\flat} \circ \rho$ is not zero, but homotopic to zero via the homotopy $\alpha$ induced by contraction with the two-form $d\theta$. There is also a dual homotopy $\alpha^*$ between $\rho^* \circ (\iota^*_\cZ \omega)^{\flat}$ and $0$. Since differential forms $\iota^*_\cZ \omega$ and $d\theta$ have the same total degree, together they induce a map between \emph{total} complexes $\Tot(T_{\cZ/G})$ and $\Tot(T^*_{\cZ/G})$. Surprisingly, this map turns out to be an isomorphism, rather than just a quasi-isomorphism, which is the content of Theorem \ref{thm:nondeg}.
\end{remark}

\begin{remark}
    In what follows we will need a notion of a contraction of a form with a vector field. In particular, given a dg-manifold $\cX$ and a derived two-form $\beta \in \Omega^2_{\cin(\cX), q}$ of derived degree $q$, there is a well-defined dg-$\cin(\cX)$-module map \[\beta^{\flat} \colon T_\cX \to T^*_\cX[q], \qquad \beta^{\flat}(V) = \iota_V \beta\] for $V \in T_\cX$. The contractions with vector fields for dg-manifolds are established in the Appendix in Proposition~\ref{prop:contractions}. 
\end{remark}

\begin{construction}
    \label{ctr:inducedmap}
We construct the map induced by the symplectic form $\omega_{\mathrm{red}}  = \iota^*_\cZ \omega + d\theta \in \mathrm{DR}^2(\cZ/G)$  of total degree 0 between tangent and cotangent complexes. The motivation for this construction comes from the correspondence between multiplicative forms and IM-forms in the non-graded setting (see \cite{XU} for quasi-symplectic groupoids and \cite{IMFORMS} for the general case).

First, contraction with the restricted symplectic form $\iota^*_\cZ\omega$ induces a map $(\iota^*_\cZ \omega)^{\flat} \colon T_\cZ \to T^*_\cZ$. Next, the contraction with the differential of the Maurer--Cartan form $d\theta$ induces a map $d\theta^{\flat} \colon T_{G\times \cZ} \to T^*_{G\times \cZ}[-1]$. Since the form $d\theta$ is multiplicative (see Lemma~\ref{lem:theta_mult}), it restricts to the map on units: $\alpha \colon T_\cZ \to \fg^*_\cZ[-1]$. Indeed, if we write $d\theta = \sum_i d\theta_i \otimes E_i + \theta_i \otimes dE_i$, then for vector fields $\mathrm{id} \otimes V \in T_{G\times M} \subseteq T_{G \times \cZ, 0}$ we have $d\theta^\flat(\mathrm{id} \otimes V) = 0$, and for vector fields $\iota_{\sigma_i}$ induced by contractions with a dual basis $\sigma_i \in \fg^*$ we have $ d\theta^\flat(\iota_{\sigma_i}) = \theta_i$. Hence, restricting to units, for a vector field $V \in T_M \subseteq T_{\cZ, 0}$, we have \[\alpha(V) = (d\theta)^\flat(\mathrm{id}\otimes V)\vert_\cZ = 0\] and for the vector field $\iota_{\sigma_i} \in T_{\cZ, 1}$ we have \[\alpha(\iota_{\sigma_i}) = (d\theta)^\flat (\iota_{\sigma_i})\vert_\cZ = \theta_i(e) = \sigma_i.\] These equations determine $\alpha$ uniquely by $\cin(\cZ)$-linearity. With respect to the decomposition $T_\cZ = \iota^*_\cZ T_M \oplus \fg^*_\cZ[-1]$, the map $\alpha$ is zero on the first component and a $\cin(\cZ)$-linear isomorphism $\alpha \colon \fg^*_\cZ[-1] \xrightarrow{\sim} \fg^*_\cZ[-1]$ on the second component.

The dual map $\alpha^* \colon \fg_\cZ[1] \to T^*_\cZ$ is given by \[
        \alpha^*(1\otimes E_i) = dE_i  \quad \text{for } 1\otimes E_i \in \fg_\cZ[1].
\] With respect to the decomposition of the cotangent module $T^*_\cZ =\fg_\cZ[1] \oplus \iota^*_\cZ T^*_M $, the map $\alpha^*$ is a $\cin(\cZ)$-linear isomorphism $\alpha^* \colon \fg_\cZ[1] \xrightarrow{\sim} \fg_\cZ[1]$ onto the first component.

We set \begin{equation} \label{eq:omegared} \omega_{\mathrm{red}}^{\flat} = \alpha + (\iota^*_\cZ \omega)^{\flat} + \alpha^*.\end{equation}
\end{construction}

\begin{theorem}[Derived symplectic reduction] \label{thm:nondeg}
    The induced map defined by \eqref{eq:omegared} \[\omega_{\mathrm{red}}^{\flat} \colon \Tot(T_{\cZ/G}) \to \Tot(T^*_{\cZ/G})\] is a $G$-equivariant isomorphism of dg-$\cin(\cZ)$-modules. 
\end{theorem}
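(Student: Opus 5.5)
We first write out both total complexes as graded $\cin(\cZ)$-modules and observe that the map \eqref{eq:omegared} is block triangular with invertible diagonal blocks. Using Proposition~\ref{prop:kahler} and Notation~\ref{not:dg_alg_not}, we have
\[
\Tot(T_{\cZ/G}) = \fg_\cZ[1]\oplus \iota^*_\cZ T_M \oplus \fg^*_\cZ[-1],\qquad \Tot(T^*_{\cZ/G}) = \fg_\cZ[1]\oplus \iota^*_\cZ T^*_M\oplus \fg^*_\cZ[-1].
\]
With respect to these decompositions, $\alpha^*$ maps the first summand isomorphically onto the first summand, since $1\otimes E_i\mapsto dE_i$. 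The map $\alpha$ maps the third summand isomorphically onto the third, since $\iota_{\sigma_i}\mapsto\sigma_i$, and it vanishes on $\iota^*_\cZ T_M$. The form $\iota^*_\cZ\omega$ has derived degree $0$ and only smooth legs, so $(\iota^*_\cZ\omega)^\flat$ kills the contractions $\iota_\sigma$ and sends $\iota^*_\cZ T_M$ to $\iota^*_\cZ T^*_M$ by the pullback of the classical $\omega^\flat$. The latter is an isomorphism of $\cin(M)$-modules because $\omega$ is nondegenerate, so its base change along $\iota_\cZ^*$ is an isomorphism. The resulting matrix is diagonal with invertible entries, up to signs. Hence $\omega^\flat_{\mathrm{red}}$ is bijective, and its inverse is automatically $\cin(\cZ)$-linear. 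It is important to organise this step in the graded category first, ignoring differentials.

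Next we show that $\omega^\flat_{\mathrm{red}}$ is a chain map, that is, that it intertwines the total differentials. On the source, the total differential combines the cocone differential of $T_\cZ$ (the entries $\iota_\mu$ and $-d\mu$), the inner differential $\iota_\mu$ on $\fg_\cZ[1]$, and the anchor $\rho=\rho_0+\eta$. On the target, it combines the cone differential of $T^*_\cZ$ (the entries $\iota_\mu$ and $(d\mu)^*$) and $\rho^*=\rho_0^*+\eta^*$. We compare the two composites entry by entry on generators $1\otimes X$, $W\in T_M$ and $\iota_\sigma$; this is enough because both composites are $\cin(\cZ)$-linear, of degree $1$ up to sign, and commute with the inner differentials. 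The required identities are as follows.
\begin{enumerate}
\item[(i)] $(\iota^*_\cZ\omega)^\flat\circ\rho_0 = \alpha^*\circ\iota_\mu$-type term plus $(d\mu)^*$. This comes down to $\iota_{X^\sharp}\omega = d\mu^X$, which is the moment map equation, together with $(d\mu)^*(X)=d\mu^X$.
\item[(ii)] $\alpha\circ\eta$ against $\eta^*$ and the $\fg^*$ component of $\rho^*$. Both sides reduce to $\mathrm{ad}^*_X$, so this is just the identification of $\eta$ with its dual under $\alpha$.
\item[(iii)] $\rho_0^*\circ(\iota^*_\cZ\omega)^\flat$ on $W$ against $\alpha\circ(-d\mu)$. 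This again follows from the moment map equation, read as $\omega(X^\sharp,W)=\pm\langle d\mu(W),X\rangle$.
\item[(iv)] Compatibility of $\alpha$ and $\alpha^*$ with $\iota_\mu$. This holds because both maps are constant on the trivial modules.
\end{enumerate}
Several other entries vanish for degree reasons. For example, $(\iota^*_\cZ\omega)^\flat(\iota_\sigma)=0$, and $\rho_0^*$ kills the $dX$ generators.

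We expect the main obstacle to be the sign bookkeeping in this step. The shift conventions of Definition~\ref{def:shift}, the cone and cocone conventions of Definition~\ref{def:cone_cocone}, and the sign in $\mathcal{L}_{\iota_\mu}(\mathrm{ad}^*_X)=-\mathrm{ad}^*_X(\mu)$ all have to match. If the signs do not match on the nose, our fallback is to adjust the sign of $\alpha$ or $\alpha^*$ consistently with the total-degree convention $D=d+(-1)^p\iota_\mu+(-1)^{p+q}\partial^*$ used in Proposition~\ref{prop:closed}. Conceptually, the cancellation in (i) and (iii) is the linearised form of the closedness identity $(s^*-t^*)\iota^*_\cZ\omega=\iota_\mu d\theta$ restricted to units. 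We would use that identity as a guide to fix the signs.

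Finally, $G$-equivariance follows from the invariance of $\omega$, the $G$-equivariance of $\rho$ (Lemma~\ref{lem:anchor_equivariant}), and the fact that $\alpha(\iota_\sigma)=\sigma$ intertwines the coadjoint action on contractions (Construction~\ref{ctr:gaction}) with the coadjoint action on $\fg^*_\cZ$. Dually, $\alpha^*$ intertwines the adjoint actions on $\fg_\cZ[1]$ and on the span of the $dX$.
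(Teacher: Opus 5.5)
Your proposal follows the paper's proof essentially step for step: the same three-summand splitting $\fg_\cZ[1]\oplus\iota^*_\cZ T_M\oplus\fg^*_\cZ[-1]$, the observation that $\omega^\flat_{\mathrm{red}}$ is diagonal with invertible blocks, the same three off-diagonal identities $(\iota^*_\cZ\omega)^\flat\rho_0=(d\mu)^*\alpha^*$, $\rho_0^*(\iota^*_\cZ\omega)^\flat=-\alpha\,d\mu$ and $\alpha\eta=\eta^*\alpha^*$ (your (i) should read exactly as the first of these, since that entry has no $\iota_\mu$ term), and blockwise equivariance. One caution: your sign ``fallback'' is not available, because rescaling $\alpha$ or $\alpha^*$ changes the map in the statement and (i), (iii) already fix their signs relative to $(\iota^*_\cZ\omega)^\flat$; fortunately none is needed, since (ii) holds on the nose once you combine $c^k_{ij}=-c^k_{ji}$ with the sign $\alpha(E_k\iota_{\sigma_j})=E_k\cdot_{\fg^*_\cZ[-1]}\sigma_j=-E_k\otimes\sigma_j$ coming from the shifted module structure (Definition~\ref{def:shift}), which is more than ``identification of $\eta$ with its dual'' conveys.
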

\begin{proof} 

On the level of graded modules the total complexes decompose as $\Tot(T_{\cZ/G}) = T_{\cZ} \oplus \fg_\cZ[1] = \fg_\cZ[1] \oplus \iota^*_\cZ T_M \oplus \fg^*_\cZ[-1]$ and $\Tot(T^*_{\cZ/G}) = T^*_{\cZ} \oplus \fg^*_\cZ[-1] = \fg_\cZ[1] \oplus \iota^*_\cZ T^*_{M} \oplus \fg^*_\cZ[-1]$. Recall that dg-$\cin(\cZ)$-modules $\fg_\cZ$, $\iota^*_\cZ T_M$, $\iota^*_\cZ T^*_M$ and $\fg^*_\cZ$ inherit the inner differential $\iota_\mu$ from $\cin(\cZ)$. The total differentials $\delta_{\Tot(T_{\cZ/G})}$ and $\delta_{\Tot(T^*_{\cZ/G})}$ can then be written in the matrix form with respect to the decomposition above as \[\delta_{\Tot(T_{\cZ/G})} = \begin{bmatrix}
    -\iota_\mu & 0 & 0 \\
    \rho_0 & \iota_\mu & 0 \\
    \eta & -d\mu & -\iota_\mu
\end{bmatrix}, \quad \delta_{\Tot(T^*_{\cZ/G})} = \begin{bmatrix}
    -\iota_\mu & 0 & 0 \\
    d\mu^* & \iota_\mu & 0 \\
    \eta^* & \rho_0^* & -\iota_\mu
\end{bmatrix}.\]
The signs come from the cone/cocone conventions (see Definition~\ref{def:cone_cocone}) and from the total complex construction. 

The map $\omega^{\flat}_{red} = \alpha + (\iota^*_\cZ \omega)^{\flat} + \alpha^*$ induces a map between the components of the total complexes $\Tot(T_{\cZ/G})$ and $\Tot(T^*_{\cZ/G})$ as follows: 

\[
\begin{tikzcd}[row sep=huge, column sep=large,
               every label/.append style={font=\small}]
    \fg_\cZ[1]
        \arrow[d, "\alpha^*"']
        \arrow[r, "\rho_0"]
        \arrow[rr, bend left=25, "\eta"]
        & \iota^*_\cZ T_M
            \arrow[d, "(\iota^*_\cZ \omega)^\flat"']
            \arrow[r, "-d\mu"]
        & \fg^*_\cZ[-1]
            \arrow[d, "\alpha"] \\
    \fg_\cZ[1]
        \arrow[r, "d\mu^*"']
        \arrow[rr, bend right=25, "\eta^*"']
        & \iota^*_\cZ T^*_M
            \arrow[r, "\rho_0^*"']
        & \fg^*_\cZ[-1]
\end{tikzcd}
\]
First we need to check that $\omega^{\flat}_{red}$ is a chain map, i.e. that \[\omega^{\flat}_{red} \delta_{\Tot(T_{\cZ/G})} = \delta_{\Tot(T^*_{\cZ/G})} \omega^{\flat}_{red}.\] Using the matrix form we compute: \[\omega^{\flat}_{red} \delta_{\Tot(T_{\cZ/G})} = \begin{bmatrix}
    -\alpha^* \iota_\mu & 0 & 0 \\
    (\iota^*_\cZ \omega)^\flat\rho_0 & (\iota^*_\cZ \omega)^\flat\iota_\mu & 0 \\
    \alpha\eta & -\alpha d\mu & -\alpha\iota_\mu
\end{bmatrix}, \quad \delta_{\Tot(T^*_{\cZ/G})} \omega^{\flat}_{red} = \begin{bmatrix}
    -\iota_\mu\alpha^* & 0 & 0 \\
    d\mu^*\alpha^* & \iota_\mu (\iota^*_\cZ \omega)^\flat & 0 \\
    \eta^* \alpha^* & \rho_0^* (\iota^*_\cZ \omega)^\flat & -\iota_\mu \alpha
\end{bmatrix}\]

Clearly, the diagonal elements are equal, since maps $\alpha, (\iota^*_\cZ \omega)^{\flat}$ and $\alpha^*$ are maps of dg-$\cin(\cZ)$-modules. We are left with checking three identities:

\begin{itemize}
\item $(\iota^*_\cZ \omega)^\flat \rho_0 = d\mu^* \alpha^*$ as maps between the trivial dg-module $\fg_\cZ[1]$ and the pullback dg-module $\iota^*_\cZ T^*_M$.

 By $\cin(\cZ)$-linearity, it is enough to prove that on elements of the form $1 \otimes X \in \fg_{\cZ, 0}$, where $X \in \fg$ with the corresponding infinitesimal action vector field $X^\sharp$ on the manifold $M$. Then \[(\iota^*_\cZ \omega)^\flat\rho_0(1\otimes X) = \iota_{X^{\sharp}} \omega = d\langle \mu , X \rangle = d\mu^*(dX) = d\mu^*\alpha^*(1 \otimes X)\]

 \item $-\alpha d\mu= \rho_0^* (\iota^*_\cZ \omega)^\flat$ as maps between $\iota^*_\cZ T_M$ and $\fg^*_\cZ[-1]$.
 
 This statement is dual to the previous one. Again, by $\cin(\cZ)$-linearity it is enough to show that on elements $1\otimes V \in \iota_\cZ^* T_M$, where $V$ is a vector field on the manifold $M$. Then \[\rho_0^* (\iota^*_\cZ \omega)^\flat(1\otimes V) = \rho_0^* (\iota_V \omega) = \{ X \mapsto \iota_{X^{\sharp}}\iota_V \omega \} =- V\langle \mu , \cdot \rangle = -\alpha d\mu(1\otimes V).\] 

 \item $\alpha \eta = \eta^* \alpha^*$ as maps between trivial dg-modules $\fg_\cZ[1]$ and $\fg^*_\cZ[-1]$.
 
 Once again, we will check this identity on elements of the form $1 \otimes E_i \in \fg_{\cZ, 0}$, where $E_i$ is a basis of the Lie algebra $\fg$.
 
 First we want to find a formula for $\eta^*$ - a dual of $\eta$. For this pick a basis $\{E_i\}$ of the Lie algebra $\fg$ and dual basis $\{\sigma_i\}$ of $\fg^*$. Recall from the proof of Proposition~\ref{prop:algebroid} that the homotopy $\eta$ can be written as \[\eta = -\sum_{k,j} c^k_{ij} E_k \otimes \iota_{\sigma_j}\] where $c^k_{ij}$ are structure functions of the Lie algebra $\fg$. Also recall that the anchor map $\rho \colon \fg_\cZ \to T_\cZ$ is given by $\rho = \rho_0 + \eta$.  Then $\rho^* = \rho_0^* + \eta^*$ and \[\langle\rho^*(dE_j), E_i \rangle = \langle dE_j, E_i^{\sharp} + \eta(E_i) \rangle  = -\sum_k c^k_{ij}E_k\] Therefore, \begin{align*} \eta^* \alpha^*(1\otimes E_i) = \eta^*(dE_i) = -\sum_{j,k} c^k_{ij} E_k \otimes \sigma_j =  -\sum_{j,k} c^k_{ij} E_k \otimes \alpha(\iota_{\sigma_j}) = \alpha \eta(E_i), \end{align*} where the last equality follows from the fact that $\alpha$ is a $\cin(\cZ)$-module map.
\end{itemize}

Finally we establish that the induced map $\omega^{\flat}_{red} \colon \Tot(T_{\cZ/G}) \to \Tot(T^*_{\cZ/G})$ is an isomorphism of $\cin(\cZ)$-modules. Indeed, since $\omega^{\flat} \colon T_M \to T^*_M$ is an isomorphism of $\cin(M)$-modules, the map  $(\iota^*_\cZ \omega)^\flat$ induces an isomorphism from $\iota^*_\cZ T_M$ to $\iota^*_\cZ T^*_M$. Furthermore, $\alpha$ induces an isomorphism between modules $\fg^*_\cZ[-1] \subseteq \Tot(T_{\cZ/G})$ and $\fg^*_\cZ[-1] \subseteq \Tot(T^*_{\cZ/G})$, and $\alpha^*$ induces isomorphisms between modules $\fg_\cZ[1] \subseteq \Tot(T_{\cZ/G})$ and $\fg_\cZ[1] \subseteq \Tot(T^*_{\cZ/G})$. Therefore the induced map $\omega_{\mathrm{red}}^{\flat}$ is an isomorphism of $C^{\infty}(\cZ)$-modules of degree 0. This isomorphism is $G$-equivariant, since the maps $\alpha, (\iota^*_\cZ \omega)^{\flat}$ and $\alpha^*$ are $G$-equivariant.

\end{proof}

\begin{remark}
    \label{rmk:BRST_form}

    The map $\alpha \colon T_\cZ \to \fg^*_\cZ[-1]$, which is the infinitesimal version of the multiplicative form $d\theta$, induces a 2-form on the infinitesimal quotient $\cZ/\fg$ (see Remark \ref{rmk:BRST}) as follows. Recall that the bigraded complex $\cin(\cZ/\fg)$ is given by $\cin(M) \otimes \wedge^{-\bullet} \fg \otimes \wedge^\bullet \fg^*$. Pick a basis $ \{E_i\}$ of $\fg$ and dual basis $\{\sigma_i\}$ of $\fg^*$. The map $\alpha$ corresponds to a form $\hat{\alpha}$ of $\Omega^1_{\cin(\cZ)} \otimes S^1(\fg^*_\cZ) \subseteq \Omega^2_{\cin(\cZ/\fg)}$. Under this correspondence, the form $\hat{\alpha}$ is given by $\hat{\alpha} = \sum_i dE_i \otimes d\sigma_i$, where $dE_i \in \Omega^1_{\cin(\cZ)}$ is a 1-form of derived bidegree $(-1,0)$ and $d\sigma_i \in S^1(\fg^*_\cZ)$ is a 1-form of derived bidegree $(0,1)$ in $\Omega^{\bullet}_{\cin(\cZ/\fg)}$. The form $\omega_0 = \omega + \hat{\alpha}$ on $\cZ/\fg$ of form degree 2 and total derived degree 0 gives $\cZ/\fg$ the structure of a 0-shifted symplectic dg NQ-manifold (see \cite[Section 1.3]{PRIDHAM}). The Poisson bracket induced by $\omega_0$ on the bigraded algebra $\cin(\cZ/\fg)$ is the classical BRST Poisson bracket, which is used in the physics literature to describe the reduced Poisson algebra of functions on the symplectic quotient $\mu^{-1}(0)/G$ in the regular case (see \cite{KOSTANTBRST}).

    The correspondance between the form $\omega_{\mathrm{red}} = \iota^*_\cZ \omega + d\theta$ on the action groupoid $\cZ/G$ and the form $\omega_0 = \omega + \hat{\alpha}$ on the infinitesimal quotient $\cZ/\fg$ can be thought of as a particular instance of the derived analogue of the Van Est map. In the non-derived setting Van Est map is a map from the Bott--Shulman complex of a Lie groupoid $G\rightrightarrows M$ to the Weil algebra of its Lie algebroid $A$ (see \cite{VANEST}): \[\mathrm{VE} \colon \Omega^{\bullet}(G_{\bullet}) \to W^{\bullet, \bullet}(A).\] In fact, as noted in \cite{MEHTATHESIS} and \cite{VANEST}, the Weil algebra of a Lie algebroid $A$ can be identified with the de Rham complex of the dg-manifold $(M, \Gamma(\wedge^\bullet A^*), d_A)$. Therefore, in the derived setting we can expect a map from the Bott--Shulman complex of the action dg-groupoid $\cZ/G$ to the de Rham complex of the infinitesimal quotient $\cZ/\fg$:
    \[\mathrm{VE} \colon \Omega^{\bullet}_{\cin(G^{\bullet}\times \cZ)} \to \Omega^{\bullet}_{\cin(\cZ/\fg)}.\]

    For dg-groupoids such a map was built on the level of groupoid cochains by Mehta in \cite[Chapter 6]{MEHTATHESIS}. It would be interesting to extend this map to the level of differential forms and explore under which conditions it induces an isomorphism on cohomology, but we will leave this question for future work.
\end{remark}

\printbibliography

\appendix 
\section{De Rham complex of a dg-manifold} \label{sec:derham}

In this section we will prove technical results regarding the existence of K\"ahler differentials and construction of the de Rham complex for dg-manifolds. Our approach is similar to the one in \cite{LERMAN} and can be easily adapted to dg $\cin$-schemes. See section~\ref{sec:kahler} for relevant definitions.

\begin{proposition}
\label{prop:kahler_ex}
    For any dg-manifold $\cX$, there exists the module of K\"ahler differentials \[d \colon C^{\infty}(\cX) \to \Omega^1_{C^{\infty}(\cX)}\]
\end{proposition}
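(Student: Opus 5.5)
The plan is to construct $\Omega^1_{C^{\infty}(\cX)}$ by generators and relations, in the style of Pridham and of \cite{LERMAN}. First we build the graded module, then put a differential on it, and finally check the universal property of Definition~\ref{def:kahler}. Let $A = C^{\infty}(\cX)$. Take the free graded $A$-module $F$ on symbols $df$, one for each (homogeneous) $f\in A$, with $|df| = |f|$. Let $R\subseteq F$ be the graded submodule generated by three families of elements:
\begin{itemize}
\item[(i)] $d(\lambda f + g) - \lambda\, df - dg$ for $\lambda\in\R$ and $f,g$ of equal degree;
\item[(ii)] $d(fg) - (df)g - f\,dg$, the degree~$0$ Leibniz rule written with the sign conventions of Definition~\ref{def:derivation};
\item[(iii)] $d(F(a_1,\dots,a_k)) - \sum_i \frac{\partial F}{\partial x_i}(a_1,\dots,a_k)\, da_i$ for $a_i\in A_0$ and $F\in\cin(\R^k)$.
\end{itemize}
Set $\Omega^1 = F/R$ and $d(f) = [df]$. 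By construction $d$ is a degree $0$ $\cin$-derivation.

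The universal property follows by the standard argument. Let $d'\colon A\to\mathcal{M}$ be a $\cin$-derivation of degree $n$. Define $\tilde\phi\colon F\to\mathcal{M}$ of degree $n$ by $\tilde\phi(a\,df) = (-1)^{n|a|} a\, d'(f)$, so that it is compatible with the convention in Definition~\ref{def:shift}. This map kills the generators of $R$ precisely because $d'$ satisfies $\R$-linearity, the graded Leibniz rule (with the sign $(-1)^{n|f|}$) and the chain rule. Hence it descends to $\phi\colon\Omega^1\to\mathcal{M}$ with $\phi\circ d = d'$. Uniqueness holds because $\Omega^1$ is generated as an $A$-module by the image of $d$. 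The only delicate point here is bookkeeping of signs: one has to check that the Leibniz relation (ii), pushed through $\tilde\phi$, reproduces exactly the degree $n$ Leibniz rule. This is a short computation once the shift convention is fixed.

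The main step is the inner differential $\delta^1$ with $d\circ\delta_\cX = \delta^1\circ d$. Consider the composite $d\circ\delta_\cX\colon A\to\Omega^1$. It is a degree $1$ map, and we claim it is a degree $1$ $\cin$-derivation. Leibniz follows by applying $d$ to $\delta_\cX(fg)$. The chain rule needs $\delta_\cX(F(a_1,\dots,a_k)) = \sum_i \partial_iF(a)\,\delta_\cX a_i$, which must be justified, since Definition~\ref{def:dgmanifold} only asks that $\delta$ be a graded derivation. We justify it from the local form: on a graded domain, vector fields are $\cin$-derivations (cf.\ \cite[Proposition 4.14]{VYSOKY}), and the sheaf property lets us globalize. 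After that, applying $d$ and using relation (iii) gives the required identity. By the universal property just proved, there is a unique degree $1$ module map $\psi\colon\Omega^1\to\Omega^1$ with $\psi\circ d = d\circ\delta_\cX$. This map satisfies $\psi(a\,\beta) = (-1)^{|a|}a\,\psi(\beta)$, whereas the dg-module axiom requires $\delta^1(a\beta) = \delta_\cX(a)\beta + (-1)^{|a|}a\,\delta^1\beta$. So we define directly
\[\delta^1(a\,df) = \delta_\cX(a)\,df + (-1)^{|a|}a\,d(\delta_\cX f)\]
on $F$. We then check that it preserves $R$, using that $\delta_\cX$ is a derivation satisfying the chain rule. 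It therefore descends to $\Omega^1$, and $\delta^1\circ d = d\circ\delta_\cX$ holds by construction.

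Finally, $(\delta^1)^2 = 0$ follows from the formula: $(\delta^1)^2(a\,df) = \delta_\cX^2(a)\,df + a\,d(\delta_\cX^2 f) = 0$, because the cross terms cancel by the sign $(-1)^{|a|}$. I expect the main obstacle to be the verification that $\delta^1$ preserves the chain-rule relations (iii), since it is there that the smooth structure of $\delta_\cX$ (and not merely its algebraic Leibniz property) is used. The fallback is to work locally on graded domains, where $\Omega^1$ is explicitly free on $dx_i$ and $d\xi_j$ for even coordinates $x_i$ and graded coordinates $\xi_j$, and then glue.
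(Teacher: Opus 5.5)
Your proposal is correct and is essentially the paper's proof: $\Omega^1$ is presented as the free $C^{\infty}(\cX)$-module on symbols $df$ modulo Leibniz and chain-rule relations. The inner differential is defined on the free module by $\delta^1(a\,df)=\delta_\cX(a)\,df+(-1)^{|a|}a\,d(\delta_\cX f)$ and descended, and the universal property comes from sending $df\mapsto d'f$. You are in fact more careful than the paper, which passes over three points you handle: the $\R$-linearity relation, the sign $(-1)^{n|a|}$ required by the degree-$n$ convention of Definition~\ref{def:shift}, and the chain rule for $\delta_\cX$ that is needed for $\delta^1$ to preserve relation (iii).

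The only slip is in the detour you discard. There $d\circ\delta_\cX$ is not a degree $1$ derivation: applying $d$ to $\delta_\cX(fg)$ leaves the extra terms $\delta_\cX(f)\,dg+(-1)^{|f|}(df)\,\delta_\cX g$. So the map $\psi$ does not exist as claimed, but your final argument never uses it.
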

\begin{proof}

Take a graded set $S = \{S_i\}_{i\in \mathbb{Z}}$, with components $S_i = \{\hat{d}a, \, a\in C^{\infty}(\cX)_i\}$. Then take a free $C^{\infty}(\cX)$-module on $S$ which we will denote by $\textrm{Free}(S)$. We can endow $\textrm{Free}(S)$ with $C^{\infty}(\cX)$-dg-module structure by setting \[\delta_S \left(\sum f_i \hat{d}a_i\right) = \sum (\delta_{\cX}(f_i)\hat{d}a_i + (-1)^{|f_i|}f_i \hat{d}(\delta_{\cX}a_i))\]  We define a graded $\cin(\cX)$-module $\Omega^1_{C^{\infty}(\cX)}$ as a quotient $\textrm{Free}(S)/\sim$ by the following relations:

    \begin{enumerate}
        \item $\hat{d}(ab) = (\hat{d}a)b + a(\hat{d}b)$
        \item For $a_1, \ldots, a_n \in C^{\infty}(\cX)_0$ and $f \in C^{\infty}(\R^n)$ we have \[\hat{d}(f(a_1, \ldots, a_n)) = \sum_{i=1}^n \frac{\partial f}{\partial x_i} (a_1, \ldots a_n) \hat{d}a_i\]
    \end{enumerate}

The differential $\delta_S$ on $\textrm{Free}(S)$ descends to a differential on the graded module $\Omega^1_{\cin(\cX)}$, making it a dg-$\cin(\cX)$-module. The map $d \colon C^{\infty}(\cX) \to \Omega^1_{\cin(\cX)}$ is defined by sending $a$ to the equivalence class of $\hat{d}a$, denoted $da$. This is a dg-morphism by definition, and is a degree 0 derivation, due to the above relations.

We now check the universal property. Suppose that $d' \colon C^{\infty}(\cX) \to (\mathcal{M},\delta_{\mathcal{M}})$ is a degree $n$ derivation. Then we define a map $\phi \colon \textrm{Free}(S) \to \mathcal{M}$ of integer degree $n$, by $\phi(\sum f_i \hat{d}a_i) = \sum f_id'a_i$. It is easy to see that it intertwines the differentials $\delta_S$ and $\delta_{\mathcal{M}}$, since $\delta_{\mathcal{M}}(d'a) = d'\delta_{\cX}a$, and $\delta_S(\hat{d}a)=\hat{d}\delta_\cX a$.

Since $d'$ is a derivation, the map $\phi$ descends to a map $\phi \colon \Omega^1_{\cin(\cX)} \to \mathcal{M}$. Furthermore it's easy to see that the map $\phi$ is unique, because it has to satisfy $\phi(da) = d'a$.

\end{proof}

Recall, that in Definition~\ref{def:dgformsmodule} we defined a dg-$\cin(\cX)$ module $\Omega^k_{\cin(\cX)} = \bigwedge^k_{\cin(\cX)}\Omega^1_{\cin(\cX)}$. We want to extend the differential $d \colon C^{\infty}(\cX) \to \Omega^1_{\cin(\cX)}$ to a degree 1 (with respect to form degree) derivation $d \colon \Omega^{\bullet}_{\cin(\cX)} \to \Omega^{\bullet+1}_{\cin(\cX)}$, turning the complex $(\Omega^{\bullet}_{\cin(\cX)},d, \delta)$ into commutative graded differential dg-algebra. First, we prove the following lemma:

\begin{lemma}
    The universal differential $d \colon C^{\infty}(\cX) \to \Omega^1_{\cin(\cX)}$ gives rise to a unique map of dg-vector spaces $d\colon \Omega^1_{\cin(\cX)} \to \Omega^2_{\cin(\cX)}$ with
    \begin{itemize}
        \item $d(\sum f_i dg_i) = \sum df_i\wedge dg_i$, for $f_i,g_i \in C^{\infty}(\cX)$.
        \item $d(df) = 0$ for all $f \in C^{\infty}(\cX)$
    \end{itemize}
\end{lemma}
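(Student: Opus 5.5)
We have to show that the assignment $\sum f_i\, dg_i \mapsto \sum df_i\wedge dg_i$ is well defined on $\Omega^1_{\cin(\cX)}$. The plan is to define the map first on the free module, check that it kills the relations, and then check compatibility with the inner differentials. We use the presentation $\Omega^1_{\cin(\cX)} = \textrm{Free}(S)/\sim$ from the proof of Proposition~\ref{prop:kahler_ex}.

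On the free module, $\textrm{Free}(S)$ is free on the symbols $\hat d a$, so as a graded vector space it is the direct sum of copies of $\cin(\cX)\otimes \R\hat d a$. We therefore define an $\R$-linear map
\[\hat D \colon \textrm{Free}(S) \to \Omega^2_{\cin(\cX)}, \qquad \hat D(f\,\hat d g) = df\wedge dg.\]
Because $\textrm{Free}(S)$ is free on the symbols $\hat d a$, this definition needs no further justification. The remaining work is to show that $\hat D$ vanishes on the submodule generated by relations (1) and (2). Since that submodule consists of $\cin(\cX)$-multiples of relation elements, and $\hat D$ is not $\cin(\cX)$-linear, we must verify vanishing on elements of the form $h\cdot R$, where $R$ is a relation.

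The key identity for this is the twisted Leibniz rule
\[\hat D(h\cdot \xi) = dh\wedge \pi(\xi) + h\,\hat D(\xi),\]
where $\pi \colon \textrm{Free}(S)\to\Omega^1_{\cin(\cX)}$ is the quotient map. This holds by construction, using that $d(hf) = dh\, f + h\, df$ in $\Omega^1_{\cin(\cX)}$ together with the appropriate graded signs. Since $\pi(R)=0$, the identity reduces the problem to showing $\hat D(R)=0$ for the relations themselves.

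For relation (1), take $R = \hat d(ab) - (\hat d a) b - a\,\hat d b$. Rewriting $(\hat d a)b$ as $\pm b\,\hat d a$ via graded commutativity, we get
\[\hat D(R) = d1\wedge d(ab) \mp db\wedge da - da\wedge db,\]
where we read $\hat d(ab)$ as $1\cdot \hat d(ab)$. Here $d1 = 0$, and the graded antisymmetry $db\wedge da = -(-1)^{|a||b|}da\wedge db$ makes the last two terms cancel. For relation (2), we have
\[\hat D(R) = -\sum_i d\bigl(\partial_i f(a)\bigr)\wedge da_i = -\sum_{i,j}\partial_j\partial_i f(a)\, da_j\wedge da_i.\]
This vanishes by the symmetry of second partial derivatives and the antisymmetry of the wedge product of degree $0$ one-forms; the second equality uses the chain rule satisfied by $d$ as a $\cin$-derivation. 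Hence $\hat D$ descends to a map $d\colon \Omega^1_{\cin(\cX)}\to\Omega^2_{\cin(\cX)}$.

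The first bullet of the statement then holds by construction, and the second follows from $d(1\cdot dg) = d1\wedge dg = 0$. Uniqueness is immediate, since every element has the form $\sum f_i\,dg_i$. Finally, compatibility with the inner differentials follows from $d\circ\delta_\cX = \delta^1_\cX\circ d$, together with the fact that $\delta$ acts on $\Omega^2_{\cin(\cX)}$ as a derivation of the wedge product. We check it on $f\,dg$:
\[\delta\bigl(df\wedge dg\bigr) = d\delta f\wedge dg + (-1)^{|f|} df\wedge d\delta g,\]
which equals $d\bigl(\delta f\, dg + (-1)^{|f|} f\, d\delta g\bigr) = d\bigl(\delta(f\,dg)\bigr)$.

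The main obstacle is the sign bookkeeping in the descent step. This covers both the twisted Leibniz identity for $\hat D$ on $h\cdot R$ and the cancellation for relation (1) in the graded setting. Getting these right depends on the sign convention for the graded wedge product on $\Omega^\bullet_{\cin(\cX)}$, which mixes form degree and derived degree, and we would fix that convention first.
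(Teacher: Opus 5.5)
Your proposal is correct and follows the same route as the paper: define $\hat D$ on $\textrm{Free}(S)$ by $f\,\hat d g\mapsto df\wedge dg$, show it kills the relation submodule, descend to $\Omega^1_{\cin(\cX)}$, and check compatibility with $\delta$. The only difference in route is that the paper does the $\delta$-check on $\textrm{Free}(S)$ before descending, while you do it on the quotient. You also supply a step the paper only asserts, namely that the map vanishes on the relation submodule: since $\hat D$ is not $\cin(\cX)$-linear, your twisted Leibniz identity $\hat D(h\xi)=dh\wedge\pi(\xi)+h\,\hat D(\xi)$ is exactly what reduces that claim from $\cin(\cX)$-multiples of relations to the relations themselves.
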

\begin{proof} Consider the $\mathbb{R}$-linear map \[\phi \colon \textrm{Free}(S) \to \Omega^2_{\cin(\cX)}, \quad \phi\left(\sum_i f_i \hat{d}g_i\right) = \sum_i df_i \wedge dg_i\] where as in the proof of Proposition \ref{prop:kahler_ex}, $S$ is a graded set $\{\hat{d}a\}_{a \in C^{\infty}(\cX)}$. To see that the map $\phi$ is a dg-morphism, we check: \[\phi \delta(f\hat{d}g) = \phi(\delta(f)\hat{d}g+(-1)^{|f|}f \hat{d}(\delta g)) = d(\delta f)\wedge dg + (-1)^{|f|}df\wedge d(\delta g) = \delta(df\wedge dg) = \delta \phi(f\hat{d}g)\]
The map $\phi$ vanishes on the relation submodule, and therefore it descends to a map $d \colon \Omega^1_{\cin(\cX)} \to \Omega^2_{\cin(\cX)}$.
\end{proof}
\begin{theorem} \label{thm:derham_diff}
    The universal differential extends to a unique degree 1 map of dg-vector spaces \[d \colon \Omega^{\bullet}_{\cin(\cX)} \to \Omega^{\bullet + 1}_{\cin(\cX)}\] so that for all $k>0$ and all $f, g_1, \ldots , g_k \in C^{\infty}(\cX)$ \[d(f dg_1\wedge\ldots\wedge dg_k) = df\wedge dg_1\wedge \ldots \wedge dg_k.\] Consequently $d\circ d =0$.
\end{theorem}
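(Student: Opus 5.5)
We need to prove Theorem \ref{thm:derham_diff}: extension of $d$ to all form degrees, uniqueness, and $d\circ d=0$.

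The plan is to construct $d$ degree by degree, exactly as in the preceding lemma, by defining it first on a presentation and then checking that it descends. For fixed $k\geq 1$, consider the $\R$-multilinear map
\[
\Phi_k \colon C^{\infty}(\cX)\times C^{\infty}(\cX)^{\times k}\to \Omega^{k+1}_{\cin(\cX)},\qquad (f,g_1,\ldots,g_k)\mapsto df\wedge dg_1\wedge\cdots\wedge dg_k .
\]
Since $\Omega^1_{\cin(\cX)}$ is generated as a $\cin(\cX)$-module by the elements $dg$ (Proposition \ref{prop:kahler_ex}), the module $\Omega^k_{\cin(\cX)}=\bigwedge^k\Omega^1_{\cin(\cX)}$ is spanned over $\R$ by elements $f\,dg_1\wedge\cdots\wedge dg_k$. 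This already gives uniqueness of any extension with the stated formula. For existence, realize $\Omega^k_{\cin(\cX)}$ as a quotient of the free module $\mathrm{Free}(S)^{\otimes_{\cin(\cX)}k}$, i.e. of the $\R$-span of symbols $f\,\hat d g_1\otimes\cdots\otimes\hat d g_k$. The relations are of three kinds:
\begin{itemize}
\item the Leibniz relation $\hat d(ab)=(\hat da)b+a\,\hat db$ in some slot;
\item the $\cin$ chain rule relation in some slot;
\item graded antisymmetry together with $\cin(\cX)$-balancedness, i.e. moving a function across a slot.
\end{itemize}
Define $\tilde d(f\,\hat dg_1\otimes\cdots\otimes \hat dg_k)=df\wedge dg_1\wedge\cdots\wedge dg_k$, and verify that $\tilde d$ kills each type of relation.

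The first two types are killed because $d(d a)=0$ by the preceding lemma, combined with the computation $d(ab)=da\, b+a\,db$ in $\Omega^1$. Concretely, replacing $dg_i$ by $d(ab)-(da)b-a\,db$ inside a wedge already gives $0$ in $\Omega^{k+1}$, since that expression vanishes in $\Omega^1$. The subtle point is balancedness. The two representatives
\[
f\,\hat dg_1\otimes\cdots\otimes h\,\hat dg_i\otimes\cdots \qquad\text{and}\qquad \pm fh\,\hat dg_1\otimes\cdots\otimes\hat dg_i\otimes\cdots
\]
must have the same image. This reduces to the identity
\[
\pm\, df\wedge dg_1\wedge\cdots\wedge dh\wedge dg_i\wedge\cdots \;=\; \pm\, d(fh)\wedge dg_1\wedge\cdots\wedge dg_i\wedge\cdots .
\]
This identity is not automatic termwise, since $d(fh)=df\,h+f\,dh$ produces a term $df\,h\wedge\cdots$ that must be matched against a term coming from $h$ sitting in the $i$-th slot. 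I expect this bookkeeping to be the main obstacle, especially tracking the Koszul signs from the bigrading (form degree and internal degree). To sidestep it, I would first prove the intermediate claim that $d$ satisfies the graded Leibniz rule
\[
d(\omega\wedge\eta)=d\omega\wedge\eta+(-1)^{p}\,\omega\wedge d\eta
\]
for $\omega=f\,dg$ and $\eta$ of the form $dg'$, using the lemma in degree one. I would then define $d$ on $\bigwedge^k$ inductively via $d(\omega\wedge dg)=d\omega\wedge dg$ and use induction on $k$ to handle balancedness one slot at a time. The sign convention should be the standard one, in which $d$ has bidegree $(1,0)$ and the sign $(-1)^{p}$ uses only the form degree $p$, as in the pairing conventions of the Bott--Shulman construction.

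Compatibility with the internal differential $\delta$ follows as in the lemma: $\delta$ is a derivation of $\Omega^\bullet$ commuting with $d$ on functions and on exact one-forms, and on generators we have
\[
d\delta(f\,dg_1\wedge\cdots)=\delta(df\wedge dg_1\wedge\cdots)
\]
by the Leibniz rule for $\delta$ and $d\delta=\delta d$ on $\cin(\cX)$. Hence $d$ is a map of dg-vector spaces. Finally, $d\circ d=0$ follows because
\[
d(df\wedge dg_1\wedge\cdots\wedge dg_k)=d(1\cdot df\wedge dg_1\wedge\cdots)=d1\wedge df\wedge\cdots=0,
\]
where $d1=0$ holds since $1\cdot 1=1$ together with the Leibniz rule.
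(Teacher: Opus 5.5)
\textbf{There is a genuine gap: existence is never proved.} Your uniqueness argument, the compatibility with $\delta$, and $d\circ d=0$ are fine once existence is settled. But existence is the substance of the theorem: you must show that $f\,dg_1\wedge\cdots\wedge dg_k\mapsto df\wedge dg_1\wedge\cdots\wedge dg_k$ is well defined. You locate the difficulty in balancedness, but you leave it open, and your workaround does not resolve it.

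\emph{The inductive scheme is circular.} Setting $d(\omega\wedge dg)=d\omega\wedge dg$ is not a definition until you show it is independent of how an element of $\Omega^k_{\cin(\cX)}$ is written as $\sum_j\omega_j\wedge dg_j$, which is the same problem again. Likewise, the intermediate Leibniz claim for $\omega=f\,dg$, $\eta=dg'$ presupposes that $d$ is already defined on $\Omega^2_{\cin(\cX)}$.

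\emph{The Leibniz and chain-rule relations are not handled as you claim.} That $d(ab)-(da)b-a\,db$ vanishes in $\Omega^1$ only shows the relation is zero in $\Omega^k$, not that $\tilde d$ kills it. Since $\tilde d$ only sees front coefficients, the terms $(\hat da)b$ and $a\,\hat db$ must first be rewritten with $b$, resp.\ $a$, in front, and then $\tilde d$ differentiates them. Besides the harmless $df\wedge\cdots\wedge\bigl(d(ab)-(da)b-a\,db\bigr)\wedge\cdots$, this produces leftovers $\pm f\,db\wedge\cdots\wedge da\wedge\cdots\pm f\,da\wedge\cdots\wedge db\wedge\cdots$. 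These cancel only by graded antisymmetry. For the chain-rule relation, the leftovers involve second partial derivatives and cancel only by their symmetry. So the $\Omega^1$-relations and the balancedness bookkeeping are entangled, and neither is actually carried out.

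\textbf{The missing idea, which is how the paper proceeds.} Stop working with exact generators, and apply the already well-defined $d\colon\Omega^1_{\cin(\cX)}\to\Omega^2_{\cin(\cX)}$ from the preceding lemma to arbitrary one-forms:
\[
\beta(\theta_1,\ldots,\theta_k)=\sum_{i}(-1)^{i+1}\theta_1\wedge\cdots\wedge d\theta_i\wedge\cdots\wedge\theta_k .
\]
Because the inputs are already classes in $\Omega^1_{\cin(\cX)}$, the Leibniz and chain-rule relations never need checking. The map $\beta$ is $\R$-multilinear and graded alternating, so only $\cin(\cX)$-balancedness remains, and that is a two-term check.

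Use $d(f\theta)=df\wedge\theta+f\,d\theta$, which follows from the lemma's formula. Compare $\beta$ with $f$ in slot $i$ against $(-1)^{|f||\theta_i|}$ times $\beta$ with $f$ in slot $i+1$. The only terms not matched by simply moving $f$ past $\theta_i$ or $d\theta_i$ are $(-1)^{i+1}\theta_1\wedge\cdots\wedge df\wedge\theta_i\wedge\cdots$ and $(-1)^{i+2}(-1)^{|f||\theta_i|}\theta_1\wedge\cdots\wedge\theta_i\wedge df\wedge\cdots$. These agree because $df\wedge\theta_i=-(-1)^{|f||\theta_i|}\theta_i\wedge df$. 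This is exactly the ``$df\,h$ versus $h$ in slot $i$'' matching you anticipated.

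Hence $\beta$ descends to $\Omega^k_{\cin(\cX)}\to\Omega^{k+1}_{\cin(\cX)}$. Evaluating it on $(f\,dg_1,dg_2,\ldots,dg_k)$ gives the required formula, since $d(dg_j)=0$.
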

\begin{proof} Consider the map \[\beta \colon \overbrace{\Omega^1_{\cin(\cX)}\times \ldots \times \Omega^1_{\cin(\cX)}}^k \to \Omega^{k+1}_{\cin(\cX)}, \quad \beta(\theta_1, \ldots, \theta_k) = \sum_i (-1)^{i+1}\theta_1\wedge \ldots \wedge d\theta_i \wedge \ldots \wedge \theta_k. \] This is a dg-morphism of graded vector spaces, hence it descends to a map \[\gamma \colon \Omega^1_{\cin(\cX)} \otimes_{\mathbb{R}} \ldots \otimes_{\mathbb{R}} \Omega^1_{\cin(\cX)} \to \Omega^{k+1}_{\cin(\cX)}.\] It is easy to see that the map $\gamma$ is alternating. We check that for any integer $i$ and function $f\in C^{\infty}(\cX)$ and any one-forms $\theta_1, \ldots, \theta_k \in \Omega^1_{\cin(\cX)}$ \[\gamma(\theta_1\otimes \cdots \otimes f\theta_i \otimes \cdots \otimes \theta_k - (-1)^{|\theta_i||f|}\theta_1 \otimes \cdots \otimes f\theta_{i+1} \otimes \cdots \otimes \theta_k) = 0.\] Indeed, \begin{align*}
&\gamma(\theta_1 \otimes \cdots \otimes f\theta_i \otimes \cdots \otimes \theta_k) = \\ &= \sum_{l\neq i} (-1)^{l+1}\theta_1\wedge \ldots \wedge d\theta_l \wedge \ldots \wedge f\theta_i \wedge \ldots \wedge \theta_k + (-1)^{i+1} \theta_1 \wedge \ldots (df \wedge \theta_i + f d\theta_i)\wedge \ldots \wedge \theta_k \\ \ &= \sum_{l\neq i} (-1)^{l+1+|f||\theta_i|}\theta_1\wedge \ldots \wedge d\theta_l \wedge \ldots \wedge f\theta_{i+1} \wedge \ldots \wedge \theta_k + (-1)^{i+2+|f||\theta_i|} \theta_1 \wedge \ldots \wedge \theta_i \wedge df \wedge \ldots \wedge \theta_k \\ &+ (-1)^{i+1 + |f||\theta_i|} \theta_1 \wedge \ldots d\theta_i \wedge f \theta_{i+1} \wedge \ldots \wedge \theta_k \\ &= (-1)^{|f||\theta_i|} \sum_{l \neq i+1} (-1)^{l+1} \theta_1 \wedge \ldots \wedge d\theta_l \wedge \ldots \wedge f\theta_{i+1} \wedge \ldots \wedge \theta_k  \\ &+ (-1)^{i+2} \theta_1 \wedge \ldots \wedge (df\wedge \theta_{i+1} +  f d\theta_{i+1}) \wedge \ldots \wedge \theta_k\\ &= (-1)^{|f||\theta_i|} \gamma(\theta_1 \otimes \cdots \otimes f\theta_{i+1} \otimes \cdots \otimes \theta_k)
\end{align*}
Hence $\gamma$ descends to the desired map  $d\colon \Omega^k_{C^{\infty}(\cX)} \to \Omega^{k+1}_{C^{\infty}(\cX)}$.

\end{proof}

\begin{proposition} \label{prop:pullback_derham}
    Any map of dg-manifolds $(\underline{\phi},\phi^*) \colon (\cX, \mathcal{O}_\cX) \to(\cY, \mathcal{O}_\cY)$ induces a unique map of commutative differential graded dg-algebras $f^* \colon \Omega^{\bullet}_{C^{\infty}(\cY)} \to \Omega^{\bullet}_{C^{\infty}(\cX)}$. Explicitly:
    \begin{equation}
    \label{pullback}
       \phi^*(fdg_1\wedge \ldots \wedge dg_k) = \phi^*(f)d\phi^*(g_1)\wedge \ldots \wedge d\phi^*(g_k)    
    \end{equation}
\end{proposition}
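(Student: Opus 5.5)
The plan is to build $\phi^*$ in two stages: first on one-forms via the universal property of Kähler differentials (Definition~\ref{def:kahler}), and then on higher forms by functoriality of exterior powers. I will write $\phi^*$ throughout for the map the statement calls $f^*$.

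\emph{Stage 1: one-forms.} Regard $\Omega^1_{C^{\infty}(\cX)}$ as a graded $C^{\infty}(\cY)$-module by restriction of scalars along $\phi^*\colon C^{\infty}(\cY)\to C^{\infty}(\cX)$. The composite $d_\cX\circ\phi^*\colon C^{\infty}(\cY)\to\Omega^1_{C^{\infty}(\cX)}$ is then a degree $0$ $\cin$-derivation in the sense of Definition~\ref{def:derivation}.
\begin{itemize}
\item The Leibniz rule holds because $\phi^*$ is an algebra map and $d_\cX$ is a derivation.
\item The chain rule holds because $\phi^*$ in degree $0$ is pullback of smooth functions, so $\phi^*(f(a_1,\dots,a_k))=f(\phi^*a_1,\dots,\phi^*a_k)$, after which we use the chain rule for $d_\cX$.
\end{itemize}
The universal property now gives a unique $C^{\infty}(\cY)$-linear map $\phi^*\colon\Omega^1_{C^{\infty}(\cY)}\to\Omega^1_{C^{\infty}(\cX)}$ with $\phi^*(dg)=d\phi^*(g)$. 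Concretely, $\phi^*(f\,dg)=\phi^*(f)\,d\phi^*(g)$.

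It remains to check that this map intertwines $\delta^1_\cY$ and $\delta^1_\cX$. The difference $\phi^*\circ\delta^1_\cY-\delta^1_\cX\circ\phi^*$ is a degree $1$ map, linear up to sign over $C^{\infty}(\cY)$, because both differentials satisfy the same Leibniz rule and $\phi^*\delta_\cY=\delta_\cX\phi^*$ on functions. It therefore suffices to test it on generators $dg$. There we get
\[\phi^*\delta^1_\cY dg=\phi^*d\delta_\cY g=d\phi^*\delta_\cY g=d\delta_\cX\phi^*g=\delta^1_\cX d\phi^*g.\]
An alternative is to work at the level of $\mathrm{Free}(S)$ as in Proposition~\ref{prop:kahler_ex}: define $\hat d a\mapsto d\phi^*a$, check that the relations are respected, and note that compatibility with $\delta_S$ is immediate.

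\emph{Stage 2: higher forms.} Extend the map to $\Omega^k=\bigwedge^k_{C^{\infty}(\cY)}\Omega^1_{C^{\infty}(\cY)}$ by the rule $\theta_1\wedge\dots\wedge\theta_k\mapsto\phi^*\theta_1\wedge\dots\wedge\phi^*\theta_k$, extended by $\phi^*$ on coefficients. The steps are as follows.
\begin{itemize}
\item The rule is well defined because it is multilinear and graded-alternating over $C^{\infty}(\cY)$ (acting through $\phi^*$). Hence it factors through the exterior power, in the same way as the map $\gamma$ in Theorem~\ref{thm:derham_diff}.
\item It is an algebra map by construction.
\item It commutes with $\delta$ because $\delta$ is a derivation on both sides and the maps agree on degree $\le 1$ generators.
\item It commutes with $d$: on the generating elements $f\,dg_1\wedge\dots\wedge dg_k$ both $d\phi^*$ and $\phi^*d$ give $d\phi^*f\wedge d\phi^*g_1\wedge\dots\wedge d\phi^*g_k$, using Theorem~\ref{thm:derham_diff} and formula~\eqref{pullback}. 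These elements span $\Omega^k$, since $\Omega^1$ is generated by exact forms.
\end{itemize}
Uniqueness follows for the same reason: any dg-algebra map commuting with $d$ and extending $\phi^*$ on $\Omega^0$ is forced on the elements $f\,dg_1\wedge\dots\wedge dg_k$, and these span.

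\emph{Expected main obstacle.} The delicate point is that the universal property requires a module over $C^{\infty}(\cY)$, so the sign conventions for restriction of scalars and for degree-$n$ maps (Definition~\ref{def:shift}) must be tracked carefully. Verifying the $\cin$ chain-rule condition for $d_\cX\circ\phi^*$ is the only non-formal input, and it is routine.
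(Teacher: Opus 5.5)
Your proposal is correct and follows the same route as the paper. The map on one-forms comes from the universal property of K\"ahler differentials, it is extended to $\Omega^\bullet$ by exterior powers, and compatibility with $d$ and $\delta$ is checked on the spanning elements $f\,dg_1\wedge\cdots\wedge dg_k$; you use the derivation property of $\delta$ where the paper writes out the sign computation, and you also supply steps the paper leaves implicit (that $d_\cX\circ\phi^*$ is a $\cin$-derivation, that the map on one-forms intertwines $\delta^1_\cY$ and $\delta^1_\cX$, and uniqueness), with one small imprecision: for dg-manifolds of mixed amplitude $C^\infty(\cY)_0$ is strictly larger than $C^\infty(\cY^0)$, so $\phi^*(f(a_1,\dots,a_k))=f(\phi^*a_1,\dots,\phi^*a_k)$ should be justified by the compatibility of morphisms with the smooth functional calculus on degree-zero elements, not by saying that $\phi^*$ is pullback of smooth functions in degree $0$.
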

\begin{proof} By universal property of K\"ahler differentials, the map $\phi^* \colon C^{\infty}(\cY) \to C^{\infty}(\cX)$ induces a map of dg-modules $\phi^* \colon \Omega^1_{C^{\infty}(\cY)} \to \Omega^1_{C^{\infty}(\cX)}$. By taking exterior powers of $\phi^*$ we obtain a map $\Lambda^k \phi^* \colon \Omega^k_{C^{\infty}(\cY)} \to \Omega^k_{C^{\infty}(\cX)}$. These patch together into a map of complexes \[\phi^* := \Lambda^{\bullet}\phi^* \colon \Lambda^{\bullet} \Omega^1_{C^{\infty}(\cY)} \to \Lambda^{\bullet} \Omega^1_{C^{\infty}(\cX)}\] satisfying \eqref{pullback}. Now \begin{align*}
    d\phi^*(f dg_1\wedge \ldots \wedge dg_k) &= d(\phi^*(f) d\phi^*(g_1)\wedge \ldots \wedge d\phi^*(g_k)) \\ &= d\phi^*(f) \wedge d\phi^*(g_1)\wedge \ldots \wedge d\phi^*(g_k) \\ &=\phi^*(df \wedge dg_1 \wedge \ldots \wedge dg_k) \\ &= \phi^*d(fdg_1\wedge \ldots \wedge dg_k)
\end{align*}
and 
\begin{multline*}
    \delta_\cX\phi^*(f dg_1\wedge \ldots \wedge dg_k) = \delta_\cX(\phi^*(f) d\phi^*(g_1)\wedge \ldots \wedge d\phi^*(g_k)) = \\ = \delta_\cX(\phi^*(f)) d\phi^*(g_1)\wedge \ldots \wedge d\phi^*(g_k) + \sum_{i=0}^{k-1} (-1)^{|f|+|g_1|+\ldots + |g_i|} \phi^*(f) d\phi^*(g_1)\wedge \ldots \wedge d(\delta_\cX\phi^*(g_{i+1}))\wedge \ldots\wedge d\phi^*(g_k) =\\ =\phi^*(\delta_\cY f) d\phi^*(g_1)\wedge \ldots \wedge d\phi^*(g_k) + \sum_{i=0}^{k-1} (-1)^{|f|+|g_1|+\ldots |g_i|} \phi^*(f) d\phi^*(g_1)\wedge \ldots \wedge d(\phi^*(\delta_\cY g_{i+1}))\wedge \ldots\wedge d\phi^*(g_k) = \\ =\phi^*( (\delta_\cY f )dg_1\wedge \ldots \wedge dg_k + \sum_{i=0}^{k-1} (-1)^{|f|+|g_1|+\ldots |g_i|} fdg_1 \wedge \ldots \wedge d(\delta_\cY g_{i+1})\wedge \ldots \wedge dg_k) = \\ = \phi^* \delta_\cY(f dg_1 \wedge \ldots \wedge dg_k)
\end{multline*}
\end{proof}

We finish the appendix with establishing contractions and Lie derivatives. For manifolds contractions and Lie derivatives are derivations on the algebra of forms. Therefore, we define the module of derivations on forms for dg-maniolds.

\begin{definition}
    We define $\mathrm{Der}^n(\Omega^\bullet_{\cin(\cX)})$ to be the space of derivations of degree $n$ on the algebra of forms $\Omega^\bullet_{\cin(\cX)}$, i.e. maps $D \colon \Omega^\bullet_{\cin(\cX)} \to \Omega^{\bullet + n}_{\cin(\cX)}$ satisfying the Leibniz rule $D(\alpha \wedge \beta) = D(\alpha) \wedge \beta + (-1)^{n\deg(\alpha)} \alpha \wedge D(\beta)$ for all $\alpha, \beta \in \Omega^\bullet_{\cin(\cX)}$.

    The $\cin(\cX)$-module structure on the space $\mathrm{Der}^n(\Omega^\bullet_{\cin(\cX)})$ is defined on one forms by $(fD)(\alpha) = f(D(\alpha))$ and is extended to higher forms to satisfy the Leibniz rule: \[(fD)(\alpha_1 \wedge \ldots \wedge \alpha_k) = \sum_{i=1}^k (-1)^{n(i-1)} \alpha_1 \wedge \ldots \wedge fD(\alpha_i) \wedge \ldots \wedge \alpha_k\]. The dg-structure on the module $\mathrm{Der}^n(\Omega^\bullet_{\cin(\cX)})$ is given by $[\delta, \cdot]$, where $\delta$ is the internal differential in $\Omega^\bullet_{\cin(\cX)}$.
\end{definition}

\begin{proposition}
    \label{prop:contractions}
    There is a map $\iota \colon T_{\cX} \to \mathrm{Der}^{-1}(\Omega^\bullet_{\cin(\cX)})$, which for any vector field $X \in T_\cX$, with degree $|X| = n$, assigns a derivation $\iota_X \colon \Omega^\bullet_{\cin(\cX)} \to \Omega^{\bullet -1}_{\cin(\cX)}$ uniquely defined by 
    \[\iota_X(fdg_1\wedge \ldots \wedge dg_k) = \sum_i (-1)^{i-1} fdg_1\wedge \ldots \wedge  X(dg_i) \wedge \ldots \wedge dg_k.\]

    This map satisfies the following properties:
    \begin{enumerate}
        \item The contraction $\iota_X$ is a derivation of bidegree $(-1,n)$.
        \item We have $\iota_{fX} = f \iota_X$ for all $f \in C^{\infty}(\cX)$.
        \item Define the Lie derivative $\mathcal{L}_X \colon  \Omega^\bullet_{\cin(\cX)} \to  \Omega^\bullet_{\cin(\cX)}$ by the formula
        \[\mathcal{L}_X = d \circ \iota_X + \iota_X \circ d.\] Then $\mathcal{L}_X$ is a derivation of bidegree $(0,n)$ and the following equation is satisfied: \[\iota_{[X,Y]} = [\mathcal{L}_X, \iota_Y]\]

        \item The inner differential $\delta$ on $\Omega^\bullet_{\cin(\cX)}$ can be identified with the Lie derivative along the cohomological vector field $\mathcal{L}_{\delta_\cX}$. 
        \item The contraction map $\iota \colon T_\cX \to \mathrm{Der}^{-1}(\Omega^\bullet_{\cin(\cX)})$ is an isomorphism of dg-$\cin(\cX)$-modules.
    \end{enumerate}

\end{proposition}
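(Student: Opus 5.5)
The plan is to build $\iota_X$ through the universal property of K\"ahler differentials (Definition~\ref{def:kahler}, Proposition~\ref{prop:kahler_ex}) and then extend it to all form degrees as a derivation. Under the identification $T_\cX \cong \mathrm{Der}(C^\infty(\cX))$, a vector field $X$ of degree $n$ corresponds to a unique degree $n$ module map $\phi_X \colon \Omega^1_{C^\infty(\cX)} \to C^\infty(\cX)$ with $\phi_X(dg) = X(g)$. We set $\iota_X = \phi_X$ on one-forms and $\iota_X f = 0$ on functions. On $\Omega^k = \bigwedge^k \Omega^1$ we define $\iota_X$ by the stated Leibniz formula, with Koszul signs included for the internal degrees. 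To see that this is well defined, start from the multilinear map $(\theta_1,\ldots,\theta_k) \mapsto \sum_i (-1)^{i-1}(\pm)\,\theta_1\wedge\cdots\wedge\phi_X(\theta_i)\wedge\cdots\wedge\theta_k$. Exactly as in the proof of Theorem~\ref{thm:derham_diff}, check that it is alternating and balanced over $C^\infty(\cX)$, so that it descends to the exterior power. Uniqueness is automatic, since forms $f\,dg_1\wedge\cdots\wedge dg_k$ generate.

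Properties (1) and (2) then follow almost directly from the construction. The bidegree count is immediate, and $\iota_{fX}=f\iota_X$ holds on one-forms because $\phi_{fX}=f\phi_X$. Since both sides are derivations agreeing on generators, the identity extends to all forms, and this matches the module structure on $\mathrm{Der}^{-1}$.

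For (3), note that $\mathcal{L}_X$ is a graded commutator of two derivations of form degrees $+1$ and $-1$, hence itself a derivation. Likewise $[\mathcal{L}_X,\iota_Y]-\iota_{[X,Y]}$ is a derivation of form degree $-1$. A form-degree $-1$ derivation vanishes on functions for degree reasons, so it suffices to check that this difference vanishes on exact one-forms $dg$, which generate $\Omega^1$ as a module. There we compute $\mathcal{L}_X\iota_Y dg = X(Y(g))$ and $\iota_Y\mathcal{L}_X dg = \iota_Y d(X g) = Y(X(g))$, which gives $[X,Y](g)$ with the expected signs. For (4), both $\delta$ and $\mathcal{L}_{\delta_\cX}=[d,\iota_{\delta_\cX}]$ are form-degree $0$ derivations that commute with $d$ and agree on functions. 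A derivation that commutes with $d$ is determined by its values on functions, so the two coincide. Here we use that $\delta$ commutes with $d$, which is the dg-morphism property of $d$ from Proposition~\ref{prop:kahler_ex} and Theorem~\ref{thm:derham_diff}.

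The main obstacle is (5), namely bijectivity of $\iota$. Injectivity is easy: if $\iota_X=0$, then $X(g)=\iota_X dg=0$ for every $g$, so $X=0$. For surjectivity I would reverse the argument. A derivation $D$ of form degree $-1$ kills functions, and its Leibniz rule then says that it is $C^\infty(\cX)$-linear on $\Omega^1$. So $D|_{\Omega^1}$ is an element $\phi$ of $\underline{\mathrm{Hom}}(\Omega^1, C^\infty(\cX)) = T_\cX$. The derivations $D$ and $\iota_\phi$ agree on $\Omega^0\oplus\Omega^1$, and since $\Omega^\bullet$ is generated in these degrees, $D=\iota_\phi$. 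Compatibility with the dg-structures amounts to the identity $[\delta,\iota_X]=\iota_{[\delta_\cX,X]}$. This follows from (3) with $X=\delta_\cX$ combined with (4), and the delicate part is keeping track of the internal-degree signs throughout.
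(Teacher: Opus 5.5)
Your proposal is correct and follows essentially the same route as the paper: you construct $\iota_X$ on $\Omega^1_{\cin(\cX)}$ via the universal property and extend it as a derivation. You then verify (3)--(5) by comparing derivations on the generators $f$ and $dg$, and you deduce dg-compatibility of $\iota$ from (3) at $X=\delta_\cX$ combined with (4), as the paper does.

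Your version is slightly more careful than the paper's in one respect. You explicitly check that the extension descends to $\bigwedge^k_{\cin(\cX)}\Omega^1_{\cin(\cX)}$, and you insert the internal Koszul signs that the displayed formula in the statement omits; these are needed for $\iota_X$ to be a degree $n$ module map.
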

\begin{proof} First, given a vector field $X \in T_\cX$, by universal property of K\"ahler differentials there is a map $\iota_X \colon \Omega^1_{\cin(\cX)} \to C^{\infty}(\cX)$ such that $\iota_X(df) = X(f)$ for all $f \in C^{\infty}(\cX)$. We extend this map to higher forms by the formula in the statement of the proposition. It is easy to see that the map $\iota_X$ is a derivation of bidegree $(-1,n)$, and that $\iota_{fX} = f \iota_X$ for all $f \in C^{\infty}(\cX)$.

The fact that the Lie derivative $\mathcal{L}_X$ is a derivation of bidegree $(0,n)$ follows from the fact that it is a graded commutator of two derivations of bidegree $(-1,n)$ and $(1,0)$. Next, we check the identity $\iota_{[X,Y]} = [\mathcal{L}_X, \iota_Y]$. Since both sides of the equation are derivations of form degree $-1$, it is enough to prove the equations on generators $dg$ of $\Omega^1_{\cin(\cX)}$ for $g\in \cin(\cX)$. We compute: \[[\mathcal{L}_X, \iota_Y](dg) = \mathcal{L}_X(Y(g)) - \iota_Y(d(X(g))) = X(Y(g)) - Y(X(g)) = [X,Y](g) = \iota_{[X,Y]}(dg).\]

Recall that inner differential $\delta$ is defined on generators of the module $\Omega^1_{\cX}$ of K\"ahler differentials as $\delta(dg) = d(\delta_{\cX}g)$. Therefore, $\mathcal{L}_{\delta_\cX}(dg) = d(\iota_{\delta_\cX} dg) + \iota_{\delta_\cX} d(dg) = d(\delta_{\cX} g) = \delta(dg)$, and the identity $\mathcal{L}_{\delta_\cX} = \delta$ follows.

Finally, we check that $\iota \colon T_\cX \to \mathrm{Der}^{-1}(\Omega^\bullet_{\cin(\cX)})$ is an isomorphism of dg-$\cin(\cX)$-modules. Since $\iota_{[\delta_\cX, X]} = [\mathcal{L}_{\delta_\cX}, \iota_X]$, we get that $\iota$ is a dg-map. It is easy to see that $\iota$ is injective, since if $X$ is a vector field such that $\iota_X = 0$, then $X(f) = \iota_X(df) = 0$ for all $f \in C^{\infty}(\cX)$, hence $X=0$. To see that $\iota$ is surjective, pick a derivation $D \in \mathrm{Der}^{-1}(\Omega^\bullet_{\cin(\cX)})$. Then we can define a vector field $X_D$ by $X_D(f) = D(df)$ for all $f \in C^{\infty}(\cX)$. It's easy to check that $\iota_{X_D} = D$, hence $\iota$ is surjective.

\end{proof}

\section{Fiber products of dg-manifolds} \label{sec:fiber_prod}

Even though the category of dg-manifolds has arbitrary homotopy fiber products, sometimes it is more convenient to work with strict fiber products, when they exist. In this section we will present an explicit construction of a fiber product in the category of dg-manifolds. We follow the approach from \cite[Chapter 7]{VYSOKY}, which established a similar result for graded manifolds.

\subsection{Tangent spaces of dg-manifolds} \label{sec:dg_tangent}

Let $(\cX, \mathcal{O}_\cX, \delta_\cX)$ be a dg-manifold. For any point $m \in \cX^0$, there is a natural evaluation map $\mathrm{ev}_m \colon C^{\infty}(\cX) \to \R$ given by $\mathrm{ev}_m(f) = \underline{f}(m)$, where $\underline{f} \colon \cX^0 \to \R$ is the underlying smooth function of $f \in \cin(\cX)$. We will write $f(m)$ for $\mathrm{ev}_m(f)$. Note that if the degree of $f$ is non-zero, then $f(m) = 0$ for any point $m \in \cX^0$. The evaluation map descends to a map on stalks $\mathrm{ev}_m \colon \mathcal{O}_{\cX,m} \to \R$. 

\begin{definition} \label{def:tangent_space}
    We define \emph{the tangent space} of a dg-manifold $\cX$ at a point $m \in \cX^0$ to be the graded vector space of derivations at $m$, i.e. \[T_m \cX  = \{D \colon \mathcal{O}_{\cX,m} \to \R \, | \, D(fg) = D(f)g(m) + (-1)^{|f||D|}f(m)D(g)\}.\] $T_m \cX$ is a graded vector space with grading given by the degree of derivations.

    For any map of dg-manifolds $(\underline{\phi}, \phi^*) \colon \cX \to \cY$ and any point $m \in \cX^0$, there is an induced map on tangent spaces \[T_m \phi \colon T_m \cX \to T_{\underline{\phi}(m)} \cY, \quad T_m \phi(D) = D \circ \phi^*_m\] where $\phi^*_m \colon \mathcal{O}_{\cY, \underline{\phi}(m)} \to \mathcal{O}_{\cX,m}$ is the map on stalks induced by $\phi^*$.
\end{definition}

In a coordinate chart around a point $m\in \cX^0$, the tangent space $T_m \cX$ can be identified with the graded vector space spanned by tangent vectors $\{\partial_{x_i}\vert_m\}$, where $x_i$ are the coordinates of the chart and $\partial_{x_i}\vert_m$ are the corresponding derivations at $m$ (for construction see \cite[Proposition 4.6]{VYSOKY}).

\begin{remark}
In general the tangent space $T_m \cX$ is not a dg-vector space. However if the cohomological vector field $\delta_\cX$ vanishes at $m$, then the space $T_m \cX$ inherits a dg-vector space structure from the first term in the Taylor expansion of the cohomological vector field $\delta_\cX$ at $m$. The construction for supermanifolds can be found in \cite{AKSZ}, and it easily generalizes to dg-manifolds.

In particular if $\cX = (E[-1], \iota_s)$ is a dg-manifold associated to a vector bundle $E \to M$ and a section $s \in \Gamma(E)$ as in Construction~\ref{ctr:zerosection}, then the cohomological vector field $\iota_s$ vanishes at a point $m \in M$ if and only if $s(m) = 0$. In this case, the tangent space $T_m \cX$ with the induced differential can be identified with the complex $T_m M \xrightarrow{D_m s} E_m$, where $D_m s$ is the vertical differential of $s$ at $m$. This coincides with the tangent complex defined by Behrend, Liao and Xu in \cite{BLX}.

\end{remark}

\subsection{Inverse image and pullback of sheaves of dg-modules}

This section is devoted to the construction of inverse image and pullback of sheaves of dg-modules along maps of dg-manifolds, which we will be using later when constructing submanifolds and inverse images of submanifolds. The results of this subsection are known, and are carried out in \cite[\href{https://stacks.math.columbia.edu/tag/0FRT}{Tag 0FRT}]{stacks-project}. We will give a brief overview of the theory in the context of dg-manifolds, and refer the reader to \cite{stacks-project} for details.

Let $f \colon \cX \to \cY$ be a morphism of dg-manifolds, with $\underline{f} \colon \cX^0 \to \cY^0$ the underlying map of smooth manifolds. Let $\mathcal{A}$ be a sheaf of dg-$\R$-algebras on the underlying manifold $\cY^0$. 

\begin{definition}
The \emph{inverse image} $\underline{f}^{-1}\cA$ is the sheafification of the presheaf on $\cX^0$ defined by
\[
    U \mapsto \colim_{V \supseteq \underline{f}(U)} \cA(V),
\]
where the colimit runs over open sets $V \subseteq \cY^0$ containing $\underline{f}(U)$, directed by reverse inclusion. 
\end{definition}

\begin{lemma} \label{lem:inverse_algebra}
The sheaf $\underline{f}^{-1}\cA$ is naturally a sheaf of dg-$\R$-algebras on the underlying manifold $\cX^0$.
\end{lemma}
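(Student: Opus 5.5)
The plan is to define the algebra structure on the presheaf level first and then pass to the sheafification. Write $P$ for the presheaf $U \mapsto \colim_{V \supseteq \underline{f}(U)} \cA(V)$.

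\emph{Structure on each stalk of the colimit.} Fix an open $U \subseteq \cX^0$. The index category (opens $V \supseteq \underline{f}(U)$ under reverse inclusion) is directed, since $V \cap V'$ again contains $\underline{f}(U)$. A directed colimit of dg-algebras, computed in graded vector spaces, is again a dg-algebra. To multiply classes $[a]$ and $[b]$ with $a \in \cA(V)$ and $b \in \cA(V')$, restrict both to $V \cap V'$ and multiply there. This is independent of representatives because the restriction maps of $\cA$ are algebra maps. The differential is $\delta[a] = [\delta a]$, which is well defined because restrictions commute with $\delta$. The Leibniz rule, $\delta^2 = 0$, graded commutativity and associativity all hold, since each of them can be checked in a single $\cA(V)$. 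The unit is $[1_{\cA(\cY^0)}]$.

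\emph{Presheaf of dg-algebras.} For $U' \subseteq U$ we have $\underline{f}(U') \subseteq \underline{f}(U)$, so every $V$ in the index set for $U$ also lies in the index set for $U'$. This gives a map $P(U) \to P(U')$ sending $[a]$ to $[a]$. It visibly preserves products, units and $\delta$, and these maps are functorial. So $P$ is a presheaf of dg-$\R$-algebras.

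\emph{Sheafification.} Model $\underline{f}^{-1}\cA(U)$ as the compatible families of germs $(s_x)_{x\in U}$ with $s_x \in P_x$ that are locally represented by sections of $P$. Taking stalks commutes with filtered colimits, so each stalk $P_x$ is a dg-algebra, and in fact $P_x \cong \cA_{\underline{f}(x)}$. Define the product and the differential pointwise on germs. A pointwise product of two locally represented families is again locally represented: on a common refinement of the two covers, take the product of the representing sections. The same argument works for $\delta$. Restriction maps of this model preserve the operations, so $\underline{f}^{-1}\cA$ is a sheaf of dg-algebras. The canonical map $P \to \underline{f}^{-1}\cA$ is then a morphism of presheaves of dg-algebras. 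By the universal property of sheafification, which is compatible with the forgetful functor (compare the Stacks Project reference cited above), this structure is the natural one.

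\emph{Main obstacle.} Everything above is formal. The only point needing care is well-definedness of the product across different representing opens, and that is exactly where directedness of the index category and the fact that restrictions are algebra maps are used.
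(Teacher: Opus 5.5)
Your proof is correct and follows the same route as the paper. You put the product and differential on the presheaf $U \mapsto \colim_{V \supseteq \underline{f}(U)} \cA(V)$ by restricting representatives to $V \cap W$, using that restrictions are dg-algebra maps, and then pass to the sheafification; the only difference is that you spell out the sheafification step via germs, where the paper simply invokes functoriality of sheafification on presheaves of dg-algebras.
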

\begin{proof}
    The presheaf $\underline{f}^{-1}_{pre}\cA$ defined by $U \mapsto \colim_{V \supseteq \underline{f}(U)} \cA(V)$ inherits a natural structure of a presheaf of dg-$\R$-algebras, since all the operations and the differential on the sheaf of dg-algebras $\cA$ are compatible with restriction maps. For example, given representatives of elements $a, b \in \underline{f}^{-1}_{pre}\cA(U)$ by pairs $[a,V]$ and $[b,W]$ with $V,W \supseteq \underline{f}(U)$ and $a \in \cA(V)$ and $b \in \cA(W)$, we can define their product in $\underline{f}^{-1}_{pre}\cA(U)$ by $[ab, V\cap W]$, where $ab$ is the product of $a$ and $b$ in $\cA(V\cap W)$. Scalar multiplication and differential are defined similarly. Since all of the operations on the sheaf of dg-algebras $\cA$ are compatible with restriction maps, this construction is independent of the choice of representatives, and it gives $\underline{f}^{-1}_{pre}\cA$ the structure of a presheaf of dg-$\R$-algebras.

    Now the sheafification $\underline{f}^{-1}\cA$ of $\underline{f}^{-1}_{pre}\cA$ inherits a natural structure of a sheaf of dg-$\R$-algebras, since it is a functor from the category of presheaves of dg-algebras to the category of sheaves of dg-algebras.
\end{proof}

\begin{definition}\label{def:inv-image-module}
Let $\cM$ be a sheaf of dg-modules over $\cA$. The \emph{inverse image} $\underline{f}^{-1}\cM$ is the sheafification of the presheaf
\[
    U \mapsto \colim_{V \supseteq \underline{f}(U)} \cM(V).
\]
\end{definition}

\begin{proposition}
$\underline{f}^{-1}\cM$ is naturally a sheaf of  dg-$\underline{f}^{-1}\cA$-modules.
\end{proposition}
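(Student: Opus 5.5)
The plan is to mimic the proof of Lemma~\ref{lem:inverse_algebra}: first equip the presheaf inverse image $\underline{f}^{-1}_{pre}\cM$, given by $U \mapsto \colim_{V \supseteq \underline{f}(U)} \cM(V)$, with the structure of a presheaf of dg-modules over the presheaf of dg-algebras $\underline{f}^{-1}_{pre}\cA$. Then pass to sheafifications, using that sheafification is compatible with the module structure. Concretely, elements of $\underline{f}^{-1}_{pre}\cM(U)$ are classes $[m,W]$ with $W \supseteq \underline{f}(U)$ and $m \in \cM(W)$. Elements of $\underline{f}^{-1}_{pre}\cA(U)$ are classes $[a,V]$. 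We set $[a,V]\cdot[m,W] = [a|_{V\cap W}\, m|_{V\cap W}, V\cap W]$ and $\delta[m,W] = [\delta_\cM m, W]$. Well-definedness on classes holds because the action $\cA(V)\times\cM(V)\to\cM(V)$ and $\delta_\cM$ commute with restriction. The module axioms, $\delta^2=0$, and the Leibniz rule $\delta(a m) = \delta(a) m + (-1)^{|a|} a\,\delta(m)$ can each be checked at a single representative level $V\cap W$, where they hold in $\cM(V\cap W)$. The restriction maps of $\underline{f}^{-1}_{pre}\cM$ are induced by the inclusions of directed systems, and they visibly respect the action and the differential. This gives a presheaf of dg-$\underline{f}^{-1}_{pre}\cA$-modules.

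For the sheafification step, I would use the stalk description. Both $\underline{f}^{-1}_{pre}\cA$ and $\underline{f}^{-1}_{pre}\cM$ have stalks at $x \in \cX^0$ equal to $\cA_{\underline{f}(x)}$ and $\cM_{\underline{f}(x)}$, since the relevant colimits are cofinal. The stalk $\cM_{\underline{f}(x)}$ is a dg-module over $\cA_{\underline{f}(x)}$. A section of the sheafification over $U$ is a compatible family of germs $s(x)$ that locally comes from the presheaf. I would define $(a\cdot s)(x) = a(x)s(x)$ and $(\delta s)(x) = \delta(s(x))$ stalkwise. The locally-representable condition is preserved, because on a small open set both $a$ and $s$ come from presheaf sections, and there the product is the presheaf product defined above. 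The axioms then hold because they hold stalkwise. Equivalently, one can cite that sheafification preserves finite products and so carries module objects to module objects, as in \cite[\href{https://stacks.math.columbia.edu/tag/0FRT}{Tag 0FRT}]{stacks-project}.

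The only step needing care is the sheafification argument, specifically that the stalkwise-defined action and differential produce locally representable sections. I would dispose of it by the explicit local argument above, shrinking to a common open neighbourhood. Naturality in $\cM$ (morphisms of dg-modules induce morphisms of inverse images) is immediate from functoriality of the colimit and of sheafification.
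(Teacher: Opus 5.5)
Your proposal is correct and is essentially the paper's proof. The paper defines the action on the presheaf inverse image by $[a,V]\cdot[m,W]=[a\cdot m,\,V\cap W]$, with the differential acting on representatives as in Lemma~\ref{lem:inverse_algebra}, and then simply asserts that the dg-module structure passes to the sheafification. Your stalkwise argument, or alternatively the appeal to sheafification preserving finite products, supplies the detail the paper leaves implicit in that last step.
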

\begin{proof}
    The proof proceeds analagously to the proof of Lemma \ref{lem:inverse_algebra}. The $\underline{f}^{-1}_{pre}\cA$-module structure on $\underline{f}^{-1}_{pre}\cM$ is defined by \[ [a,V] \cdot [m,W] = [a\cdot m, V\cap W] \] for  $a \in \cA(V), m \in \cM(W)$ where  $V\supseteq \underline{f}(U)$ and $W \supseteq \underline{f}(U)$. This gives $\underline{f}^{-1}_{pre}\cM$ the structure of a presheaf of  dg-$\underline{f}^{-1}_{pre}\cA$-modules, and the sheafification $\underline{f}^{-1}\cM$ inherits a natural structure of a sheaf of  dg-$\underline{f}^{-1}\cA$-modules.
\end{proof}

\begin{definition}[Pullback]\label{def:pullback}
Let $\cM$ be a sheaf of dg-modules over the sheaf of dg-algebras $\cO_\cY$. The \emph{pullback} of the sheaf $\cM$ along the map $f\colon \cX \to \cY$ is the sheaf of dg-$\cO_\cX$-modules on the underlying manifold $\cX^0$ defined by
\[
    f^*\cM = \cO_\cX \otimes_{f^{-1}\cO_\cY} f^{-1}\cM,
\]
where $\otimes_{f^{-1}\cO_\cY}$ denotes the tensor product of sheaves of dg-$f^{-1}\cO_\cY$-modules: the sheafification of the presheaf
\[
    U \mapsto \cO_\cX(U) \otimes_{f^{-1}\cO_\cY(U)} f^{-1}\cM(U).
\]
\end{definition}

\begin{proposition}\label{prop:dg-module_pullback}
The sheaf $f^*\cM$ is a sheaf of dg-$\cO_\cX$-modules on the underlying manifold $\cX^0$.
\end{proposition}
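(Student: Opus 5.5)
The plan is to follow the standard Stacks Project argument (\cite[Tag 0FRT]{stacks-project}), adapted to the dg-setting. We work first at the presheaf level and then sheafify. By Lemma~\ref{lem:inverse_algebra}, the inverse image $f^{-1}\cO_\cY$ is a sheaf of dg-$\R$-algebras. By the proposition following Definition~\ref{def:inv-image-module}, $f^{-1}\cM$ is a sheaf of dg-$f^{-1}\cO_\cY$-modules.

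The first ingredient is the comparison map $f^{-1}\cO_\cY \to \cO_\cX$. The morphism $f^*\colon \cO_\cY \to \underline{f}_*\cO_\cX$ is a map of sheaves of dg-algebras. By the adjunction $\underline{f}^{-1} \dashv \underline{f}_*$ it corresponds to a map $f^\flat\colon \underline{f}^{-1}\cO_\cY \to \cO_\cX$. Explicitly, a germ $[a,V]$ with $V\supseteq \underline{f}(U)$ is sent to $f^*(a)|_U$. This assignment is independent of representatives, because $f^*$ commutes with restrictions. It respects products and differentials, because each $f^*|_V$ is a dg-algebra map, so the map descends through sheafification as a morphism of sheaves of dg-algebras. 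This makes $\cO_\cX(U)$ a dg-module over $f^{-1}\cO_\cY(U)$ for every open $U$, compatibly with restrictions.

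The second step works sectionwise on an open $U$. We equip $\cO_\cX(U)\otimes_{f^{-1}\cO_\cY(U)} f^{-1}\cM(U)$ with the left $\cO_\cX(U)$-action $a\cdot(b\otimes m)=ab\otimes m$ and the differential
\[
\delta(b\otimes m)=\delta_\cX b\otimes m+(-1)^{|b|}b\otimes \delta m .
\]
We then check three things.
\begin{itemize}
\item The differential is well defined on the balanced tensor product. For $c\in f^{-1}\cO_\cY(U)$, we must show that $\delta(bc\otimes m)=\delta(b\otimes cm)$. This follows from the Leibniz rules in $\cO_\cX$ and in $f^{-1}\cM$, together with the fact that $f^\flat$ intertwines the differentials.
\item The differential squares to zero. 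The cross terms cancel by the sign $(-1)^{|b|}$, and the remaining terms vanish because $\delta_\cX^2=0$ and $\delta^2=0$.
\item The module compatibility rule of Definition~\ref{def:dgmodule} holds. This is immediate from the Leibniz rule in $\cO_\cX$.
\end{itemize}
Restriction maps act factorwise and commute with all of these structures. So we obtain a presheaf of dg-$\cO_\cX$-modules.

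The final step, which I expect to be the only delicate point, is to show that sheafification preserves the dg-module structure. One approach is to argue on stalks. The action and the differential are defined on each stalk, and a section of the sheafification is a compatible family of germs. Applying $\delta$ or the action germwise preserves compatibility, because both operations commute with restrictions, and the required identities hold stalkwise. Alternatively, we can invoke functoriality of sheafification together with the fact that it preserves finite products, as in the proof of Lemma~\ref{lem:inverse_algebra}. Either way, $f^*\cM$ is a sheaf of dg-$\cO_\cX$-modules. The main thing to watch throughout is the Koszul signs in the well-definedness check of the second step.
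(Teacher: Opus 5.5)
Your proposal is correct and takes essentially the same route as the paper. You equip the presheaf $U\mapsto \cO_\cX(U)\otimes_{f^{-1}\cO_\cY(U)} f^{-1}\cM(U)$ with the left $\cO_\cX(U)$-action and the differential $\delta(b\otimes m)=\delta_\cX b\otimes m+(-1)^{|b|}b\otimes\delta_\cM m$, check well-definedness on the balanced tensor product, $\delta^2=0$ and the Leibniz rule, and then sheafify; the only difference is that you spell out the comparison map $f^{-1}\cO_\cY\to\cO_\cX$ (via the adjunction) and the sheafification step, both of which the paper leaves implicit by referring back to the argument of Lemma~\ref{lem:inverse_algebra}.
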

\begin{proof}
    As in the proof of Lemma \ref{lem:inverse_algebra}, it is enough to check that the presheaf $f^*_{pre} \cM = U \mapsto \cO_\cX(U) \otimes_{f^{-1}\cO_\cY(U)} f^{-1}\cM(U)$ has a natural structure of a presheaf of dg-$\cO_\cX$-modules. Indeed, we have the following structures on the presheaf $f^*_{pre} \cM(U)$:

    \begin{enumerate}[label=(\roman*)]
    \item \emph{Grading:} $f^*_{pre}\cM(U)_n = \bigoplus_{p+q=n} \cO_\cX(U)_p \otimes_{f^{-1}\cO_\cY(U)^0} f^{-1}\cM(U)_q / \sim$, where the equivalence relation is generated by $b f(a) \otimes m \sim b \otimes am$ for $a \in f^{-1}\cO_\cY(U)$, $b \in \cO_\cX(U)$ and $m \in f^{-1}\cM(U)$.

    \item \emph{Left $\cO_\cX(U)$-module structure:} $b' \cdot (b \otimes m) := (b'b) \otimes m$ for $b' \in \cO_\cX(U)$ which is well-defined since
    \[
        b'(b f(a)) \otimes m = b'b f(a) \otimes m = b'b \otimes am.
    \]

    \item \emph{Differential:} Defined on elementary tensors by
    \[
        \delta(b \otimes m) := \delta_\cX(b) \otimes m + (-1)^{|b|} b \otimes \delta_\cM(m).
    \]
    This is well-defined on $f^*_{pre}\cM(U)$: applying $\delta$ to $b f(a) \otimes m - b \otimes am$ gives
    \begin{align*}
        &\delta_\cX(b f(a)) \otimes m + (-1)^{|b|+|a|} b f(a) \otimes \delta_\cM(m)\\
        &\quad - \delta_\cX(b) \otimes am - (-1)^{|b|} b \otimes \delta_\cM(am).
    \end{align*}
    Using $\delta_\cX(b f(a)) = \delta_\cX(b) f(a) + (-1)^{|b|}b\,f(\delta_\cY(a))$ (since $f$ is a dg-algebra map, so $\delta_\cX \circ f = f \circ \delta_\cY$) and
    $\delta_\cM(am) = \delta_\cY(a)m + (-1)^{|a|}a\,\delta_\cM(m)$, these terms cancel, confirming $\delta$ is well-defined.

    \item \emph{$\delta^2 = 0$:} We compute
    \[
        \delta^2(b \otimes m) = \delta(\delta_\cX(b) \otimes m + (-1)^{|b|} b \otimes \delta_\cM(m)).
    \]
    Expanding and using $\delta_\cX^2 = 0$, $\delta_\cM^2 = 0$, and the sign cancellation
    \[
        (-1)^{|b|+1}\delta_\cX(b)\otimes \delta_\cM(m) + (-1)^{|b|}\delta_\cX(b)\otimes \delta_\cM(m) = 0,
    \] 
    we get $\delta^2 = 0$.

    \item \emph{Leibniz rule for the $\cO_\cX(U)$-module structure:} For $b' \in \cO_\cX(U)$,
    \begin{align*}
        \delta(b' \cdot (b\otimes m)) &= \delta(b'b \otimes m) = \delta_\cX(b'b)\otimes m + (-1)^{|b'|+|b|}b'b\otimes \delta_\cM(m)\\
        &= (\delta_\cX(b')b + (-1)^{|b'|}b'\,\delta_\cX(b))\otimes m + (-1)^{|b'|+|b|}b'b\otimes \delta_\cM(m)\\
        &= \delta_\cX(b') \cdot (b\otimes m) + (-1)^{|b'|}b'\cdot \delta(b\otimes m).
    \end{align*}
\end{enumerate}

\end{proof}

\begin{construction}[Functoriality of pullback]\label{ctr:pullback_functoriality}
Let $f \colon \cX \to \cY$ be a map of dg-manifolds and $\phi \colon \cM \to \cN$ be a morphism of sheaves of dg-$\cO_\cY$-modules. Then there is an induced morphism of sheaves of dg-$\cO_\cX$-modules $f^*\phi \colon f^*\cM \to f^*\cN$.

Sectionwise the map $f^*\phi$ is defined by $f^*\phi(b \otimes m) = b \otimes f^{-1}\phi(m)$ for $b \in \cO_\cX(U)$ and $m \in f^{-1}\cM(U)$. Here $f^{-1}\phi \colon f^{-1}\cM \to f^{-1}\cN$ is the morphism of sheaves of dg-$f^{-1}\cO_\cY$-modules induced by the sheafification of the presheaf morphism $f^{-1}_{pre}\phi \colon f^{-1}_{pre}\cM \to f^{-1}_{pre}\cN$ defined by $f^{-1}_{pre}\phi([m,V]) = [\phi(m), V]$ for $m \in \cM(V)$ and $V \supseteq \underline{f}(U)$.

The map $f^*\phi$ has the following properties:
\begin{itemize}
\item \emph{Well-defined}: $bf(a)\otimes m \mapsto bf(a)\otimes \phi(m) = b\otimes (a\cdot\phi(m)) = b\otimes\phi(am)$ for $a \in \cO_\cY(U)$, $b \in \cO_\cX(U)$, and $m \in f^{-1}\cM(U)$, using the $\cO_\cY$-linearity of $\phi$.
    \item \emph{Commutes with $\delta$:} $(f^*\phi)(\delta(b\otimes m)) = (f^*\phi)(\delta_\cX(b)\otimes m + (-1)^{|b|}b\otimes \delta_\cM(m)) = \delta_\cX(b)\otimes\phi(m) + (-1)^{|b|}b\otimes\phi(\delta_\cM(m)) = \delta_\cX(b)\otimes\phi(m) + (-1)^{|b|}b\otimes \delta_\cN(\phi(m)) = \delta(f^*\phi(b\otimes m))$, using $\phi \circ \delta_\cM = \delta_\cN \circ \phi$.
    \item \emph{$\cO_\cX$-linear:} $(f^*\phi)(b'\cdot(b\otimes m)) = b'\cdot(f^*\phi)(b\otimes m)$ is immediate.
\end{itemize}

\end{construction}

\subsection{Dg-submanifolds}

\begin{definition}
    We say that a map of dg-manifolds $(\underline{\phi}, \phi^*) \colon \cS \to \cY$ is an \emph{immersion} if for any point $m \in \cS^0$, the tangent map $T_m \phi \colon T_m \cS \to T_{\underline{\phi}(m)} \cY$ is an injective map of graded vector spaces.
    
    We say that a pair $(\cS, \phi)$ is a \emph{(closed) embedded dg-submanifold} of a dg-manifold $\cY$ if the map $\phi$ is an immersion and the underlying smooth map $\underline{\phi} \colon \cS^0 \to \cY^0$ is a closed embedding of smooth manifolds.

    We also say that two embedded dg-submanifolds $(\cS, \phi)$ and $(\cS', \phi')$ of $\cY$ are \emph{equivalent} if there is an isomorphism of dg-manifolds $\psi \colon \cS \to \cS'$ such that $\phi' \circ \psi = \phi$.
\end{definition}

Given an embedded dg-submanifold $(\cS, \phi)$ of $\cY$ and an induced map of sheaves $\phi^* \colon \mathcal{O}_\cY \to \underline{\phi}_* \mathcal{O}_\cS$, we define the corresponding sheaf of ideals $\mathcal{J}_\cS = \ker \phi^*$.

\begin{proposition} \label{prop:emb_sub_ideal}
    The sheaf of ideals $\mathcal{J}_\cS$ satisfies the following properties:
    \begin{enumerate}
        \item The sheaf $\mathcal{J}_\cS$ is a sheaf of dg-ideals, i.e. $\delta_\cY(\mathcal{J}_\cS(U)) \subset \mathcal{J}_\cS(U)$ for any open subset $U \subset \cY^0$.
        \item For any point $m \in \underline{\phi}(\cS^0)$, there is a chart $U$ around $m$ with coordinates $\{x_i\}_{i\in I}$ and a subset $J \subset I$ such that $\mathcal{J}_\cS(U)$ is generated by $\{x_j\}_{j \in J}$.
        \item The map $\phi^*$ induces an isomorphism of sheaves of dg-algebras  \[\tilde{\phi}^* \colon \underline{\phi}^{-1}(\mathcal{O}_\cY/\mathcal{J}_\cS) \xrightarrow{\sim} \mathcal{O}_\cS\].
    \end{enumerate}
\end{proposition}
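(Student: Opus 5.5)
We prove the three properties in order, using the local structure of immersions to get (2) and then deducing (1) and (3) from it. Property (1) is the easiest: since $\phi^*$ is a morphism of sheaves of dg-algebras, $\phi^* \circ \delta_\cY = \delta_\cS \circ \phi^*$. Hence if $f \in \mathcal{J}_\cS(U)$, then $\phi^*(\delta_\cY f) = \delta_\cS(\phi^* f) = 0$, so $\delta_\cY f \in \mathcal{J}_\cS(U)$. That $\mathcal{J}_\cS$ is a sheaf of graded ideals is automatic, because it is the kernel of a morphism of sheaves of graded algebras.

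For (2) we forget the differentials and work with the underlying graded manifolds in Vysoky's framework. Fix $m = \underline{\phi}(p)$. Since $T_p\phi$ is injective, we can choose graded coordinates $\{x_i\}_{i\in I}$ on $\cY$ near $m$ such that the pullbacks of $\{x_i\}_{i \in I\setminus J}$ form a coordinate system of $\cS$ near $p$, where $J$ indexes the remaining coordinates. In each degree, these are the coordinates whose differentials at $p$ complement the image of $T_p\phi$. This is the graded immersion theorem, i.e. the graded inverse function theorem of \cite[Chapter 7]{VYSOKY}. Applying the same theorem and using that $\underline{\phi}$ is a closed embedding, we can further arrange, after shrinking $U$, that $\phi^*(x_j) = 0$ for every $j \in J$. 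It then remains to show that the ideal generated by $\{x_j\}_{j\in J}$ equals $\ker\phi^*$ on $U$, which amounts to a graded Hadamard lemma. Any $f \in \mathcal{O}_\cY(U)$ can be written as $f = f_0(x_{I\setminus J}) + \sum_{j\in J} x_j g_j$, where $f_0$ is a power series in the coordinates $x_{I\setminus J}$ only. Then $\phi^* f = \phi^* f_0$, and this vanishes if and only if $f_0 = 0$, because the $\phi^* x_{I \setminus J}$ are coordinates on $\cS$. I expect this step to be the main obstacle. The Hadamard decomposition must be carried out for mixed-degree power series, so convergence of the decomposition has to be handled degree by degree using the bounded amplitude. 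The ordinary smooth Hadamard lemma is needed for the degree-0 coordinates, and at this point the closedness of the embedding enters, to extend the local argument away from $\underline{\phi}(\cS^0)$.

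For (3), first note that $\phi^*$ factors through $\mathcal{O}_\cY/\mathcal{J}_\cS$ by the definition of $\mathcal{J}_\cS$. Taking adjoints gives a morphism $\tilde\phi^*\colon \underline{\phi}^{-1}(\mathcal{O}_\cY/\mathcal{J}_\cS) \to \mathcal{O}_\cS$. By (1), this is a morphism of sheaves of dg-algebras, since the quotient inherits the differential, and $\underline{\phi}^{-1}$ preserves dg-structure by Lemma \ref{lem:inverse_algebra}. To prove that $\tilde\phi^*$ is an isomorphism, it suffices to check on stalks at points $p\in\cS^0$, because $\underline{\phi}$ is a closed embedding and therefore stalks of $\underline{\phi}^{-1}$ at $p$ are stalks at $\underline{\phi}(p)$. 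In the chart from (2), the quotient $\mathcal{O}_\cY(U)/(x_j)_{j\in J}$ is identified with power series in $x_{I\setminus J}$, and $\phi^*$ maps this isomorphically onto $\mathcal{O}_\cS(\underline\phi^{-1}U)$. Surjectivity follows because the $\phi^*x_{I\setminus J}$ are coordinates on $\cS$, and injectivity is the kernel computation in (2).
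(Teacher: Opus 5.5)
Your proof is correct. Part (1) is exactly the paper's argument. For (2) and (3) the paper gives no argument of its own: it cites \cite[Proposition 7.30]{VYSOKY} both for the adapted chart and for the claim that $\phi^*$ already induces an isomorphism from the inverse-image \emph{presheaf} $V\mapsto\colim_{U\supseteq\underline{\phi}(V)}\mathcal{O}_\cY(U)/\mathcal{J}_\cS(U)$ onto $\mathcal{O}_\cS$. It then sheafifies and notes compatibility with differentials. You instead reconstruct the content of that citation: the normal form of an immersion from the graded inverse function theorem, a triangular coordinate change forcing $\phi^*x_j=0$, and a coefficientwise Hadamard splitting. You then check (3) on stalks. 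The citation keeps the proof short. Your route shows precisely where the immersion and embedding hypotheses enter. Your stalkwise check is also more economical than a presheaf-level isomorphism, which would require extending sections from $\underline{\phi}(V)$ to an open neighbourhood.

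Two small points. First, closedness of $\underline{\phi}$ is never actually used in your argument; you only need $\underline{\phi}$ to be a topological embedding. That guarantees that after shrinking $U$, the image $\underline{\phi}(\cS^0)\cap U$ is exactly the slice where the degree-zero coordinates among $\{x_j\}_{j\in J}$ vanish, and that the sets $\underline{\phi}^{-1}(U)$ form a neighbourhood basis of $p$. The Hadamard splitting on such a chart already covers all of $U$, including points off the image, so nothing needs to be extended. Also, stalks of $\underline{\phi}^{-1}$ are stalks at $\underline{\phi}(p)$ for any continuous map, not just closed embeddings. Second, sections are formal power series (products taken degreewise), so there is no convergence to control. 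What you do need is that $U$ be star-shaped about that slice in the degree-zero $J$-directions, so that Hadamard's integral formula applies uniformly to every coefficient.
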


\begin{proof}
    The first property follows from the fact that the pullback map $\phi^*$ is a morphism of dg-algebras, and so its kernel is preserved by the differential.

    The second property is proved in \cite[Proposition 7.30]{VYSOKY}.

    Finally, for the third statement, first take an inverse presheaf $\underline{\phi}^{-1}_{pre}(\mathcal{O}_\cY/\mathcal{J}_\cS)$, which is defined by \[\underline{\phi}^{-1}_{pre}(\mathcal{O}_\cY/\mathcal{J}_\cS)(V) = \lim_{V \subset U} \mathcal{O}_\cY(U)/\mathcal{J}_\cS(U),\] where the limit is taken over all open subsets $U$ of the underlying manifold $\cY^0$ containing open set $V$. The map $\tilde{\phi}^* \colon \underline{\phi}^{-1}_{pre}(\mathcal{O}_\cY/\mathcal{J}_\cS) \to \mathcal{O}_\cS$ is defined by sending a section $s \in \underline{\phi}^{-1}(\mathcal{O}_\cY/\mathcal{J}_\cS)(V)$, which is represented by a section $\tilde{s} \in \mathcal{O}_\cY(U)/\mathcal{J}_\cS(U)$ for some open set $U$ containing $V$, to the section $\phi^*(\tilde{s})$. In \cite[Proposition 7.30]{VYSOKY} it is shown that the map $\tilde{\phi}^*$ is an isomorphism of presheaves of graded algebras. Since $\mathcal{O}_\cS$ is a sheaf, the map  $\tilde{\phi}^*$ factors through the sheafification $\underline{\phi}^{-1}(\mathcal{O}_\cY/\mathcal{J}_\cS)$, and the resulting map $\tilde{\phi}^* \colon \underline{\phi}^{-1}(\mathcal{O}_\cY/\mathcal{J}_\cS) \to \mathcal{O}_\cS$ is an isomorphism of sheaves of graded algebras. Finally, since $\tilde{\phi}^*$ by construction intertwines the differentials, it is an isomorphism of sheaves of dg-algebras.
    
\end{proof}

\begin{definition}
    Let $\mathcal{I}$ be a sheaf of dg-ideals inside the sheaf of dg-algebras $\mathcal{O}_\cY$. Denote by $Z(\mathcal{I})$ the subset of $\cY^0$ consisting of points $m$ such that for any section $s \in \mathcal{I}(U)$ defined in a neighborhood $U$ of $m$, we have $\mathrm{ev}_m(s) = 0$. We say that $\mathcal{I}$ is a \emph{sheaf of regular dg-ideals} if for any point $m \in Z(\mathcal{I})$, there is a chart $U$ around $m$ with coordinates $\{x_i\}_{i\in I}$ and a subset $J \subset I$ such that $\mathcal{I}(U)$ is generated by $\{x_j\}_{j \in J}$.
\end{definition}

Clearly if $(\cS, \phi)$ is an embedded dg-submanifold of a dg-manifold $\cY$, then the sheaf of ideals $\mathcal{J}_\cS$ is a sheaf of regular dg-ideals by Proposition~\ref{prop:emb_sub_ideal}. The following proposition gives a converse statement.

\begin{proposition} \label{prop:dg-submanifold}
    Let $\mathcal{I}$ be a sheaf of regular dg-ideals in $\mathcal{O}_\cY$. Then there is an embedded dg-submanifold $(\cS, \phi)$ of the dg-manifold $\cY$ such that $\mathcal{I} = \mathcal{J}_\cS$. Moreover, any two embedded dg-submanifolds of the dg-manifold $\cY$ with the same sheaf of ideals are equivalent.
\end{proposition}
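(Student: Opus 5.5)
The plan follows the graded-manifold argument of \cite[Proposition 7.30]{VYSOKY} and then checks that the differential descends. The strategy is to build $\cS$ as the quotient ringed space $\mathcal{O}_\cY/\mathcal{I}$ restricted to the zero set $Z(\mathcal{I})$.

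\emph{Underlying manifold.} Set $\cS^0 = Z(\mathcal{I})$. Near each $m \in Z(\mathcal{I})$, regularity gives a chart $U$ with coordinates $\{x_i\}_{i\in I}$ such that $\mathcal{I}(U)$ is generated by $\{x_j\}_{j\in J}$. Evaluation kills every generator of nonzero degree, so $Z(\mathcal{I})\cap U$ is the common zero set of the degree-$0$ coordinates $x_j$ with $j \in J$. This is a closed embedded submanifold of $U$.

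Two facts make $Z(\mathcal{I})$ closed in $\cY^0$. First, it is locally closed by the chart description. Second, if $m \notin Z(\mathcal{I})$, some local section $s$ of $\mathcal{I}$ has $\mathrm{ev}_m(s)\neq 0$, and this persists on a neighborhood of $m$. Hence $\cS^0$ is a closed embedded submanifold. Its charts are the ones just described, and the exponents $(n_i)$ are constant on components, as the definition requires; I expect this to be arranged as in Vysoky.

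\emph{Structure sheaf.} Define
\[\mathcal{O}_\cS = \underline{\phi}^{-1}(\mathcal{O}_\cY/\mathcal{I}),\]
where $\underline{\phi}$ is the inclusion of $\cS^0$. The quotient of a sheaf of dg-algebras by a sheaf of dg-ideals is a sheaf of dg-algebras, since $\delta_\cY(\mathcal{I})\subset\mathcal{I}$. By Lemma~\ref{lem:inverse_algebra}, $\mathcal{O}_\cS$ is then a sheaf of dg-algebras on $\cS^0$.

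Locally, $\mathcal{O}_\cY(U)/(x_j)_{j\in J}$ is a graded domain in the remaining coordinates $\{x_i\}_{i\in I\setminus J}$. This is the computation in \cite[Proposition 7.30]{VYSOKY}, and I would cite it rather than redo the handling of power series in mixed degrees. So $\mathcal{O}_\cS$ is locally a graded domain.

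The derivation $\delta_\cY$ descends to a degree-$1$ derivation $\delta_\cS$ that squares to zero, again because $\mathcal{I}$ is a dg-ideal. The map $\phi^*\colon \mathcal{O}_\cY \to \underline{\phi}_*\mathcal{O}_\cS$ is the quotient composed with the adjunction unit, so it is a morphism of sheaves of dg-algebras.

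\emph{Immersion and ideal.} In the chart above, $T_m\cS$ is spanned by $\partial_{x_i}|_m$ for $i \notin J$. The tangent map sends each of these to the corresponding $\partial_{x_i}|_m \in T_m\cY$, so it is injective and $\phi$ is an immersion.

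It remains to show $\ker\phi^* = \mathcal{I}$. Both sheaves vanish off $Z(\mathcal{I})$ at the stalk level, and the adjunction map is an isomorphism on stalks at points of $\cS^0$. I expect the main obstacle here: the statement ``$\mathcal{I}(U)$ is generated by $\{x_j\}$'' must be shown compatible with restriction and sheafification, for instance by showing that $\mathcal{I}$ and $\ker\phi^*$ agree on a basis of chart neighborhoods. Away from $Z(\mathcal{I})$ one must also check that $\mathcal{I}$ is the whole structure sheaf locally. This follows because a section that is nonvanishing at a point is invertible in the graded sense: its degree-$0$ body is nonzero there, and the nilpotent or formal remainder can be inverted as a power series.

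\emph{Uniqueness.} Let $(\cS',\phi')$ be another embedded dg-submanifold with $\mathcal{J}_{\cS'}=\mathcal{I}$. Its underlying image is $Z(\mathcal{I})$, so there is a diffeomorphism $\underline{\psi}$ of underlying manifolds. Proposition~\ref{prop:emb_sub_ideal}(3) gives dg-isomorphisms
\[\mathcal{O}_{\cS'} \cong \underline{\phi}'^{-1}(\mathcal{O}_\cY/\mathcal{I}) \quad\text{and}\quad \mathcal{O}_{\cS} \cong \underline{\phi}^{-1}(\mathcal{O}_\cY/\mathcal{I}).\]
Composing these through $\underline{\psi}$ gives $\psi\colon\cS\to\cS'$ with $\phi'\circ\psi=\phi$.
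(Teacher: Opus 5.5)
Your proposal is correct and matches the paper's argument. The paper also takes $\mathcal{O}_\cS = \underline{\phi}^{-1}(\mathcal{O}_\cY/\mathcal{I})$, lets $\delta_\cY$ descend because $\mathcal{I}$ is a dg-ideal, and proves uniqueness by composing the isomorphisms of Proposition~\ref{prop:emb_sub_ideal}(3); it differs only in obtaining the whole graded part (underlying submanifold, local model, immersion, $\ker\phi^* = \mathcal{I}$) at once from \cite[Theorem 7.35]{VYSOKY}, where you unpack it yourself. The step you flag as the main obstacle is in fact already settled by your own stalk comparison, since regularity is only needed for the local-model step, and your explicit handling of the underlying diffeomorphism $\underline{\psi}$ in the uniqueness part is more careful than the paper's.
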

\begin{proof}
    By \cite[Theorem 7.35]{VYSOKY} there is a unique graded submanifold $(\cS,\phi)$ of the dg-manifold $\cY$ with a structure sheaf $\mathcal{O}_\cS = \underline{\phi}^{-1}(\mathcal{O}_\cY/\mathcal{I})$. Since $\mathcal{I}$ is a dg-ideal, the differential on the sheaf of dg-algebras $\mathcal{O}_\cY$ descends to a differential on quotient sheaf $\mathcal{O}_\cS$, making $(\cS, \phi)$ into a dg-submanifold of $\cY$. Finally, if $(\cS', \phi')$ is another embedded dg-submanifold of $\cY$ with the same sheaf of ideals $\mathcal{I}$, then by Proposition~\ref{prop:emb_sub_ideal} there are isomorphisms of sheaves of dg-algebras $\tilde{\phi}^* \colon \underline{\phi}^{-1}(\mathcal{O}_\cY/\mathcal{I}) \to \mathcal{O}_\cS$ and $\tilde{\phi}'^* \colon \underline{\phi}'^{-1}(\mathcal{O}_\cY/\mathcal{I}) \to \mathcal{O}_{\cS'}$. Hence we get an isomorphism of sheaves of dg-algebras $\psi^* = (\tilde{\phi}')^* \circ (\tilde{\phi}^*)^{-1} \colon \mathcal{O}_\cS \to \mathcal{O}_{\cS'}$, which induces an isomorphism of dg-manifolds $\psi \colon \cS' \to \cS$ such that $\phi' = \phi \circ \psi$.
\end{proof}

\subsection{Products of dg-manifolds}

The category of dg-manifolds possesses finite products, which are given by the Cartesian product of the underlying manifolds. The underlying sheaf is more complicated than the tensor product of the sheaves of the factors, which can be seen even for manifolds, as $\cin(M\times N)$ is not isomorphic to $\cin(M)\otimes_{\R} \cin(N)$. One has to take a suitable completion of the tensor product.

\begin{proposition} \label{prop:dg-product}
    Let $(\cX, \mathcal{O}_\cX, \delta_\cX)$ and $(\cY, \mathcal{O}_\cY, \delta_\cY)$ be two dg-manifolds. Then there exists a dg-manifold $\cX \times \cY = (\cX^0 \times \cY^0, \mathcal{O}_{\cX\times \cY}, \delta_{\cX\times \cY})$ and dg-maps $\pi_\cX \colon \cX \times \cY \to \cX$ and $\pi_\cY \colon \cX \times \cY \to \cY$ such that $\underline{\pi_\cX}$ and $\underline{\pi_\cY}$ are smooth projections of $\cX^0 \times \cY^0$ onto $\cX^0$ and $\cY^0$ respectively. Moreover, $(\cX \times \cY, \pi_\cX, \pi_\cY)$ is a binary product of $\cX$ and $\cY$ in the category of dg-manifolds.
\end{proposition}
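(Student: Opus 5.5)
The plan is to follow Vysoky's construction of products of graded manifolds (\cite[Chapter 7]{VYSOKY}) and then add the differential. First the underlying graded manifold. Near points of $\cX^0$ and $\cY^0$ choose graded charts $(U, \cin_{(n_i)}\vert_U)$ and $(V, \cin_{(m_i)}\vert_V)$. On $U\times V\subset \R^{n_0+m_0}$ use the graded domain $\cin_{(n_i+m_i)}\vert_{U\times V}$, i.e. power series in all the nonzero-degree coordinates of both factors with coefficients in $\cin(U\times V)$. Vysoky's transition-function argument glues these into a sheaf $\mathcal{O}_{\cX\times\cY}$ on $\cX^0\times\cY^0$. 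This sheaf is a completion of $\mathcal{O}_\cX\boxtimes\mathcal{O}_\cY$, and the sequence $(n_i+m_i)$ is the same in every chart. The projections $\pi_\cX,\pi_\cY$ are defined chartwise by sending coordinates to coordinates, and they glue. The universal property at the graded level, that graded maps $\cW\to\cX\times\cY$ correspond bijectively to pairs of graded maps, I take from \cite[Chapter 7]{VYSOKY}. Its key input is that a graded morphism into a graded domain is determined by the pullbacks of the coordinate functions.

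Next I construct $\delta_{\cX\times\cY}$. The natural candidate is $\delta_\cX\otimes 1+1\otimes\delta_\cY$, with the Koszul sign built into the graded tensor product. Concretely, I want the unique degree $1$ derivation $D$ of $\mathcal{O}_{\cX\times\cY}$ such that $D\circ\pi_\cX^*=\pi_\cX^*\circ\delta_\cX$ and $D\circ\pi_\cY^*=\pi_\cY^*\circ\delta_\cY$. The main obstacle is showing that such a $D$ exists and is unique, since $\mathcal{O}_{\cX\times\cY}$ is not generated algebraically by the pulled-back functions. Smooth functions in mixed variables and infinite power series both appear.

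To handle this I would use the local description of vector fields (cf.\ \cite[Proposition 4.14]{VYSOKY}). On a product chart, a derivation of $\cin_{(n_i+m_i)}$ that satisfies the chain rule is uniquely a possibly infinite but degree-wise convergent sum $\sum_a D(z_a)\,\partial_{z_a}$ over all coordinates $z_a$. Prescribing $D(z_a)=\pi^*(\delta(z_a))$ for the $\cX$-coordinates and for the $\cY$-coordinates gives a well-defined local derivation. Uniqueness makes the local pieces agree on overlaps, so they glue to a global derivation. Then $D^2=\tfrac12[D,D]$ is again a derivation. It vanishes on all pulled-back coordinates because $\delta_\cX^2=0=\delta_\cY^2$, so $D^2=0$ by the same uniqueness. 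By construction $\pi_\cX$ and $\pi_\cY$ are dg-maps.

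Finally, the universal property in dg-manifolds. Let $f\colon\cW\to\cX$ and $g\colon\cW\to\cY$ be dg-maps. The graded universal property gives a unique graded map $h=(f,g)\colon\cW\to\cX\times\cY$. It remains to check $h^*\circ D=\delta_\cW\circ h^*$. Both sides are $h$-derivations of degree $1$ from $\mathcal{O}_{\cX\times\cY}$ to $h_*\mathcal{O}_\cW$. They agree on the pullbacks $\pi_\cX^*(\mathcal{O}_\cX)$ and $\pi_\cY^*(\mathcal{O}_\cY)$, because $f$ and $g$ are dg-maps. By the local coordinate description above, an $h$-derivation is determined by its values on coordinates, so the two sides are equal. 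Uniqueness of $h$ as a dg-map is inherited from uniqueness at the graded level.
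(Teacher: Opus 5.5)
Your proposal is correct and follows essentially the same route as the paper: take Vysoky's graded product, put on it the derivation $\delta_{\cX}\otimes 1 + 1\otimes\delta_{\cY}$ defined chartwise on coordinates, and deduce $\delta_{\cX\times\cY}^2=0$ and the dg universal property from the fact that derivations are determined by their values on coordinate functions. Your treatment of the last two steps ($D^2=\tfrac12[D,D]$ vanishing on coordinates, and comparing $h^*\circ D$ with $\delta_{\cW}\circ h^*$ as $h$-derivations) is more explicit than the paper's, but note two points: the statement that $h$-derivations are determined by coordinates is an analogue of Vysoky's Proposition 4.14 rather than that result itself, and the coordinate sums are in fact finite.
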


\begin{proof}
    If we consider $\cX$ and $\cY$ as graded manifolds, then by \cite[Proposition 3.41]{VYSOKY}, there is a graded binary product $\cX \times \cY = (\cX^0 \times \cY^0, \mathcal{O}_{\cX\times \cY})$, together with graded projection $\pi_\cX \colon \cX \times \cY \to \cX$ and $\pi_\cY \colon \cX \times \cY \to \cY$. We define a vector field $\delta_{\cX\times \cY}$ on $\cX \times \cY$ by the formula \[\delta_{\cX\times \cY} = \delta_\cX \otimes 1 + 1 \otimes \delta_\cY\] where $\delta_\cX \otimes 1$ and $1 \otimes \delta_\cY$ are natural lifts of $\delta_\cX$ and $\delta_\cY$ to the product $\cX \times \cY$ (see \cite[Example 4.25]{VYSOKY}). Let $U$ be a chart on $\cX$ with coordinates $\{x_i\}$ and $V$ be a chart on $\cY$ with coordinates $\{y_j\}$. Then the product $U \times V$ is a chart on the product graded manifold $\cX \times \cY$ with coordinates $\{x_i, y_j\}$. In these coordinates we can write $\delta_\cX = \sum_i f_i \partial_{x_i}$ and $\delta_\cY = \sum_j g_j \partial_{y_j}$, where $f_i \in \mathcal{O}_\cX(U)$ and $g_j \in \mathcal{O}_\cY(V)$. Hence we get \begin{equation} \label{eq:local_delta_prod} \delta_{\cX\times \cY}\vert_{U\times V} = \sum_i \pi_\cX^*(f_i) \partial_{x_i} + \sum_j \pi_\cY^*(g_j) \partial_{y_j}.\end{equation} Since $\partial_{x_i} \pi^*_\cY(g_j) = 0$ and $\partial_{y_j} \pi^*_\cX(f_i) = 0$, and $\delta_\cX^2 =0$ and $\delta_\cY^2 = 0$, we get that $\delta_{\cX\times \cY}^2\vert_{U\times V} = 0$. Hence $\delta_{\cX\times \cY}$ is a cohomological vector field on the product space $\cX \times \cY$, and $(\cX \times \cY, \delta_{\cX\times \cY})$ is a dg-manifold. By construction, the projections $\pi_\cX$ and $\pi_\cY$ are maps of dg-manifolds. Finally if $\cZ$ is another object with dg-maps $\phi_\cX \colon \cZ \to \cX$ and $\phi_\cY\ \colon \cZ \to cY$, then by the universal property of the product of graded manifolds there is a unique map of graded manifolds $\psi \colon \cZ \to \cX \times \cY$ such that $\pi_\cX \circ \psi = \phi_\cX$ and $\pi_\cY \circ \psi = \phi_\cY$. In local chart $U \times V$ on $\cX \times \cY$,  with coordinates $\{x_i, y_j\}$, the map $\psi$ is given by $\psi^*(x_i) = \phi_\cX^*(x_i)$ and $\psi^*(y_j) = \phi_\cY^*(y_j)$. Since $\phi_\cX$ and $\phi_\cY$ are maps of dg-manifolds, $\psi^*$ is also a dg-map. 
\end{proof}

\subsection{Transversality and preimages}
Preimages of submanifolds under smooth maps are a special case of fiber products. They exist if the map is transverse to the submanifold. We give a similar criterion for the existence of preimages of embedded dg-submanifolds under maps of dg-manifolds.

\begin{definition}
    Let $\phi \colon \cX \to \cZ$ and $\varphi \colon \cY \to \cZ$ be two maps of dg-manifolds. We say that $\phi$ and $\varphi$ are \emph{transverse} if for any point $m \in \cX^0$ and $n \in \cY^0$ such that $\underline{\phi}(m) = \underline{\varphi}(n)$, the images of the tangent maps $T_m \phi \colon T_m \cX \to T_{\underline{\phi}(m)} \cZ$ and $T_n \varphi \colon T_n \cY \to T_{\underline{\varphi}(n)}\cZ$ together generate the whole tangent space $T_{\underline{\phi}(m)}\cZ$, i.e. \[\mathrm{Im}(T_m\phi) + \mathrm{Im}(T_n\varphi) = T_{\underline{\phi}(m)}\cZ.\] We write $\phi \pitchfork \varphi$ to denote that $\phi$ and $\varphi$ are transverse.
\end{definition}

\begin{proposition} \label{prop:inverse_image}
    Let $(\cS, \iota)$ be an embedded dg-submanifold of a dg-manifold $\cZ$ and $\phi \colon \cX \to \cZ$ be a map of dg-manifolds such that $\phi$ and $\iota$ are transverse $\phi \pitchfork \iota$. Then there is an embedded dg-submanifold $(\phi^{-1}(\cS), \tilde{\iota})$ of the dg-manifold $\cX$ and a dg-map $\tilde{\phi} \colon \phi^{-1}(\cS) \to \cS$ fitting into the following commutative diagram:  \[\begin{tikzcd}
\phi^{-1}(\cS) \arrow[r, "\tilde{\phi}"] \arrow[d, "\tilde{\iota}"] & \cS \arrow[d, "\iota"] \\
\cX \arrow[r, "\phi"] & \cZ. 
\end{tikzcd}\] Moreover this diagram is a pullback square in the category of dg-manifolds.
\end{proposition}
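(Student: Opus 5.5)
The plan is to mirror the classical proof of the preimage theorem, working with sheaves of ideals and Proposition~\ref{prop:dg-submanifold}. Let $\mathcal{J}_\cS \subset \cO_\cZ$ be the sheaf of regular dg-ideals of $\cS$ (Proposition~\ref{prop:emb_sub_ideal}). Define $\mathcal{I} \subset \cO_\cX$ as the sheaf of ideals generated by $\phi^*(\underline{\phi}^{-1}\mathcal{J}_\cS)$, i.e.\ the image of $\cO_\cX \otimes_{\underline{\phi}^{-1}\cO_\cZ} \underline{\phi}^{-1}\mathcal{J}_\cS \to \cO_\cX$. It is a dg-ideal. Indeed, $\delta_\cX(b\,\phi^*(j)) = \delta_\cX(b)\phi^*(j) \pm b\,\phi^*(\delta_\cZ j)$, and $\delta_\cZ j \in \mathcal{J}_\cS$ by part (1) of Proposition~\ref{prop:emb_sub_ideal}. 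The zero set $Z(\mathcal{I})$ is $\underline{\phi}^{-1}(\underline{\iota}(\cS^0))$, which is closed.

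The key step is to show that $\mathcal{I}$ is regular. Fix $m \in Z(\mathcal{I})$ and choose a chart of $\cZ$ around $\underline{\phi}(m)$ with coordinates $\{z_i\}_{i\in I}$ such that $\mathcal{J}_\cS$ is generated by $\{z_j\}_{j\in J}$ (part (2) of Proposition~\ref{prop:emb_sub_ideal}). Then $\mathcal{I}$ is generated near $m$ by the functions $\phi^*(z_j)$, $j \in J$. The tangent space $T_{\underline\iota(n)}\cS$ is exactly the annihilator of the $dz_j$, $j\in J$. Transversality $\mathrm{Im}(T_m\phi)+\mathrm{Im}(T_n\iota)=T\cZ$ therefore gives that the covectors $d(\phi^*z_j)|_m$, $j\in J$, are linearly independent in the graded dual $(T_m\cX)^*$, degree by degree. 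Next I apply the graded inverse function theorem (Vysoky's local theory, \cite[Chapter 3--4]{VYSOKY}): functions $f_j = \phi^*(z_j)$ of prescribed degrees with linearly independent differentials at $m$ can be completed to a coordinate system $\{f_j\}_{j\in J}\cup\{y_k\}$ near $m$. This is the graded analogue of the submersion normal form, and it shows that $\mathcal{I}$ is regular. Proposition~\ref{prop:dg-submanifold} then produces the embedded dg-submanifold $(\phi^{-1}(\cS),\tilde\iota)$ with $\mathcal{J}_{\phi^{-1}(\cS)}=\mathcal{I}$.

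For the map $\tilde\phi$, note that $\phi^*$ maps $\underline\phi^{-1}\mathcal{J}_\cS$ into $\mathcal{I}$. Hence it descends to a morphism of sheaves of dg-algebras $\underline\phi^{-1}(\cO_\cZ/\mathcal{J}_\cS)\to \cO_\cX/\mathcal{I}$. Composing with the isomorphisms of part (3) of Proposition~\ref{prop:emb_sub_ideal}, and using that $\underline\phi$ restricts to $\underline{\phi}^{-1}(\cS^0)\to\cS^0$ because $\underline\iota$ is a closed embedding, gives $\tilde\phi$. The square commutes by construction.

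For the universal property, let $\alpha\colon\cW\to\cX$ and $\beta\colon\cW\to\cS$ be maps with $\phi\alpha=\iota\beta$. Then $\alpha^*$ kills $\phi^*\mathcal{J}_\cS$, because $\alpha^*\phi^* j=\beta^*\iota^* j=0$, and so $\alpha^*$ kills all of $\mathcal{I}$. Consequently $\alpha$ factors uniquely through $\cO_\cX/\mathcal{I}$, i.e.\ through $\tilde\iota$. This uses the standard fact that maps into an embedded submanifold correspond to maps killing its ideal, proved as in \cite[Chapter 7]{VYSOKY}; compatibility with the differentials is automatic since everything is a quotient of dg-algebras. Compatibility of the factored map with $\beta$ follows because $\iota$ is a monomorphism, again since $\iota^*$ is surjective onto $\cO_\cS$ locally.

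I expect the main obstacle to be the regularity step. Carrying the graded inverse function theorem over to mixed-degree power series requires care: the linear independence of the differentials has to be checked separately in each degree, and the dependence of coordinate changes on formal series must be handled. For this I would lean on Vysoky's proof of the graded-manifold version of the statement and simply observe that the dg-structure plays no role there.
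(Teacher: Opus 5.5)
Your proposal is correct and follows essentially the same route as the paper: define $\mathcal{I}$ as the image of the pulled-back ideal $\phi^*\mathcal{J}_\cS \to \cO_\cX$, check that it is a regular dg-ideal, apply Proposition~\ref{prop:dg-submanifold}, and get $\tilde\phi$ and the pullback property from the fact that maps killing the relevant ideal sheaves factor through the corresponding submanifolds. The only difference is that you spell out steps the paper delegates to \cite[Theorem 7.43]{VYSOKY}: regularity, by using transversality to complete the $\phi^*(z_j)$ to a coordinate system, and the identity $\tilde\phi\circ\psi=\beta$, via $\iota$ being a monomorphism.
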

\begin{proof}
    Our proof is an adaptation of the proof of graded case in \cite[Theorem 7.43]{VYSOKY}. We need to show the following:
    \begin{enumerate}[label=(\roman*)]
        \item \emph{Construction of $\phi^{-1}(\cS)$}. Let $\mathcal{J}_\cS$ be the sheaf of ideals corresponding to the embedded dg-submanifold $(\cS, \iota)$. In \cite[Theorem 7.43]{VYSOKY} Vysoky constructs a sheaf of graded ideals $\mathcal{I}$ in $\mathcal{O}_\cX$ corresponding to the preimage of $\cS$ under $\phi$. The construction works as follows: let $j \colon \mathcal{J}_\cS \to \mathcal{O}_\cZ$ be the inclusion of the sheaf of ideals, and let $\phi^*j \colon \phi^* \mathcal{J}_\cS \to \phi^* \mathcal{O}_\cZ \cong \mathcal{O}_\cX$ be the pullback of this inclusion (see Construction~\ref{ctr:pullback_functoriality}). Then the sheaf of graded ideals $\mathcal{I}$ is defined as the image of $\phi^*j$. The transversality condition $\phi \pitchfork \iota$ implies that $\mathcal{I}$ is a sheaf of regular graded ideals (this follows from Vysoky's proof). Furthermore, since $\mathcal{J}_\cS$ is a dg-ideal, the pullback $\phi^* \mathcal{J}_\cS$ is a sheaf of dg-$\cO_\cX$-modules (see Proposition \ref{prop:dg-module_pullback}), and the image $\mathcal{I}$ of $\phi^*j$ is a sheaf of dg-ideals. Therefore by Proposition~\ref{prop:dg-submanifold} there is an embedded dg-submanifold $(\phi^{-1}(\cS), \tilde{\iota})$ of $\cX$ corresponding to the sheaf of ideals $\mathcal{I}$. 
        
        \item \emph{Construction of $\tilde{\phi}\colon \phi^{-1}(\cS) \to \cS$}. Consider a map $\phi \circ \tilde{\iota} \colon \phi^{-1}(\cS) \to \cZ$. In order to show that it descends to a map $\tilde{\phi} \colon \phi^{-1}(\cS) \to \cS$, it is enough to show that the pullback of the sheaf of ideals $\mathcal{J}_\cS$ along $\phi \circ \tilde{\iota}$ is zero. This is checked in \cite[Theorem 7.43]{VYSOKY}.
        
        \item \emph{Pullback diagram}. Finally, we need to show that the diagram is a pullback square in the category of dg-manifolds. Let $(\cW, \alpha, \beta)$ be another cone over $\cX$ and $\cS$. We need to show that there is a unique map of dg-manifolds $\psi \colon \cW \to \phi^{-1}(\cS)$ such that $\tilde{\iota} \circ \psi = \alpha$ and $\tilde{\phi} \circ \psi = \beta$. Once again, it is enough to show that the map $\alpha \colon \cW \to \cX$ factors through $\phi^{-1}(\cS)$, i.e. that the pullback of the sheaf of ideals $\mathcal{I}$ along $\alpha$ is zero. This is also shown in \cite[Theorem 7.43]{VYSOKY}.
    \end{enumerate}

\end{proof}

\subsection{Fiber products}

\begin{construction}
    For any dg-manifold $\cZ$, there is a diagonal map $\Delta_\cZ \colon \cZ \to \cZ \times \cZ$ defined by $\pi^1_\cZ \circ \Delta_\cZ = \pi^2_\cZ \circ \Delta_\cZ = \mathrm{id}_\cZ$, where $\pi^1_\cZ$ and $\pi^2_\cZ$ are the projections from $\cZ \times \cZ$ to the first and second factor respectively. The diagonal map is an embedding (see \cite[Example 7.49]{VYSOKY}).
\end{construction}

\begin{proposition} \label{prop:dg_fiber_products}
    Let $\phi \colon \cX \to \cZ$ and $\varphi \colon \cY \to \cZ$ be two maps of dg-manifolds. Then the transversality $\phi \pitchfork \varphi$ holds if and only if the map $\phi \times \varphi \colon \cX \times \cY \to \cZ \times \cZ$ is transverse to the diagonal $\Delta_\cZ$. In this case, there exists an embedded dg-submanifold $\cX \times_\cZ \cY$ of $\cX \times \cY$ defined as the preimage of $\Delta_\cZ$ under $\phi \times \varphi$, and the following diagram is a pullback square in the category of dg-manifolds: \[\begin{tikzcd}
\cX \times_\cZ \cY \arrow[r, "\pi_\cY"] \arrow[d, "\pi_\cX"] & \cY \arrow[d, "\varphi"] \\
\cX \arrow[r, "\phi"] & \cZ.
\end{tikzcd}\]
\end{proposition}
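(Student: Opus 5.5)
The plan is to reduce the equivalence $\phi \pitchfork \varphi \iff (\phi\times\varphi) \pitchfork \Delta_\cZ$ to linear algebra of graded tangent spaces, and then obtain the fiber product from Proposition~\ref{prop:inverse_image}. First I will identify tangent spaces of products: for $(m,n) \in \cX^0\times\cY^0$, the map $(T\pi_\cX, T\pi_\cY)\colon T_{(m,n)}(\cX\times\cY) \to T_m\cX \oplus T_n\cY$ is an isomorphism of graded vector spaces. This holds because in a product chart $U\times V$ with coordinates $\{x_i,y_j\}$ (as in the proof of Proposition~\ref{prop:dg-product}), $T_{(m,n)}(\cX\times\cY)$ has basis $\{\partial_{x_i}|_{(m,n)}, \partial_{y_j}|_{(m,n)}\}$, and these map to the coordinate bases of the factors. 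Under this identification, $T(\phi\times\varphi) = T_m\phi \oplus T_n\varphi$. Similarly $T_z\Delta_\cZ$ becomes the diagonal $v \mapsto (v,v)$, since $\pi^i_\cZ\circ\Delta_\cZ = \mathrm{id}$.

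Next comes the linear algebra. A point of $\cX^0\times\cY^0$ and a point $z\in\cZ^0$ have the same image in $\cZ^0\times\cZ^0$ exactly when $\underline\phi(m)=\underline\varphi(n)=z$. For graded subspaces $A=\mathrm{Im}\,T_m\phi$ and $B=\mathrm{Im}\,T_n\varphi$ of $W=T_z\cZ$, I check degreewise that $(A\oplus B) + \Delta(W) = W\oplus W$ if and only if $A+B=W$. For the forward direction, given $w$, write $(w,0) = (a,b)+(u,u)$; then $w = a - b \in A+B$. For the converse, given $(w_1,w_2)$, write $w_1-w_2=a+b$; then $(w_1,w_2) = (a,-b) + (w_2+b,\, w_2+b)$. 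This yields the equivalence.

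Finally, assume transversality. The diagonal $(\cZ,\Delta_\cZ)$ is an embedded dg-submanifold, so Proposition~\ref{prop:inverse_image} applied to $\phi\times\varphi$ gives an embedded dg-submanifold $\cX\times_\cZ\cY := (\phi\times\varphi)^{-1}(\Delta_\cZ)$ of $\cX\times\cY$. It comes with a map $\tilde\phi$ to $\cZ$, and the corresponding square is a pullback. I define the two legs as the projections of $\cX\times\cY$ composed with $\tilde\iota$. Commutativity $\phi\circ\pi_\cX = \varphi\circ\pi_\cY$ follows because both equal $\tilde\phi$, using $\pi^i_\cZ\circ\Delta_\cZ=\mathrm{id}$. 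The universal property then follows formally. Given $\alpha\colon\cW\to\cX$ and $\beta\colon\cW\to\cY$ with $\phi\alpha=\varphi\beta$, the map $(\alpha,\beta)\colon\cW\to\cX\times\cY$ from Proposition~\ref{prop:dg-product} together with $\phi\alpha\colon\cW\to\cZ$ forms a cone over the preimage square, since $(\phi\times\varphi)(\alpha,\beta) = \Delta_\cZ\circ\phi\alpha$ by the uniqueness clause of the product. This gives a unique factorization, and uniqueness transfers because $\Delta_\cZ$ is a monomorphism.

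The main obstacle is the first step: rigorously identifying $T_{(m,n)}(\cX\times\cY)$ with $T_m\cX\oplus T_n\cY$ and identifying the tangent maps. Here I would rely on the local coordinate description of tangent spaces cited after Definition~\ref{def:tangent_space} together with Vysoky's product charts. The differentials $\delta$ play no role in this step, since transversality is a condition on graded tangent spaces only.
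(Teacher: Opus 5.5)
Your proposal is correct and follows the same three-step route as the paper. You reduce the transversality equivalence to linear algebra on graded tangent spaces, which the paper simply cites as Vysoky's Proposition 7.51; you obtain $\cX\times_\cZ\cY$ from Proposition~\ref{prop:inverse_image}; and you rewrite the fiber-product universal property as that of the preimage of the diagonal, filling in explicitly the details the paper defers to Vysoky, all of which check out.
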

\begin{proof}
This proposition consists of three statements:
\begin{enumerate}[label=(\roman*)]
    \item \emph{$\phi \pitchfork \varphi$ if and only if $(\phi \times \varphi) \pitchfork \Delta_\cZ$.} This is a straightforward check using the definition of transversality (see \cite[Proposition 7.51]{VYSOKY}).
    \item \emph{If $(\phi \times \varphi) \pitchfork \Delta_\cZ$, then there exists an embedded dg-submanifold $\cX \times_\cZ \cY$ of $\cX \times \cY$ defined as the preimage of the diagonal $\Delta_\cZ$ under $\phi \times \varphi$.} This follows from Proposition~\ref{prop:inverse_image}.
    \item \emph{The diagram is a pullback square in the category of dg-manifolds.} The proof follows \cite[Proposition 7.52]{VYSOKY} \textit{mutatis mutandis}. The idea is to rewrite the universal property of the fiber product in terms of the universal property of the preimage (established in Proposition~\ref{prop:inverse_image}) of the diagonal under the map $\phi \times \varphi$.
\end{enumerate}
\end{proof}

\section{Serre--Swan for dg-vector bundles} \label{sec:serreswan}

The goal of this appendix is to show that no information is lost when passing from locally free sheaves of dg-modules to global sections. More precisely, we show that the global sections functor induces an equivalence between the category of locally free sheaves of dg-modules of constant rank and the category of projective finitely generated dg-modules over the dg-manifold. This result was established for graded manifolds by Smolka and Vysoky in \cite{SMOLKA}, so we won't give a detailed proof, but rather explain how to adapt the arguments of \cite{SMOLKA} to the dg-setting.

\subsection{Locally free sheaves of dg-modules}
Let $(\cX, \cO_\cX, \delta_{\cX})$ be a dg-manifold with a base manifold $\cX^0$.

\begin{definition}
    We say that the sheaf $(\cS, \delta_{\cS})$ is a \emph{sheaf of dg-modules} over the sheaf of dg-algebras $(\cO_\cX, \delta_{\cX})$ if
    \begin{itemize}
        \item The sheaf $\cS$ is a sheaf of graded $\R$-vector spaces over the underlying manifold $\cX^0$ and $\delta_{\cS} : \cS \to \cS$ is a degree $+1$ sheaf morphism such that $\delta_{\cS}^2 = 0$.
        
        \item For any open set $U \subset \cX^0$, $(\cS(U), \delta_{\cS}\vert_U)$ is a $\cO_\cX(U)$-dg-module, i.e. $\cS(U)$ is a graded $\cO_\cX(U)$-module and \[\delta_{\cS}\vert_U(f\cdot m) = \delta_{\cX}\vert_U (f)\cdot m + (-1)^{|f|}f \cdot \delta_{\cS}\vert_U (m)\] for any $f \in \cO_\cX(U)$ and $m \in \cS(U)$.

    \end{itemize}

    A \emph{morphism of sheaves of dg-modules} $\phi : (\cS, \delta_{\cS}) \to (\cP, \delta_{\cP})$ is a sheaf morphism $\phi : \cS \to \cP$ such that for any open set $U \subset \cX^0$, $\phi\vert_U : (\cS(U), \delta_{\cS}\vert_U) \to (\cP(U), \delta_{\cP}\vert_U)$ is a morphism of $\cO_\cX(U)$-dg-modules.
\end{definition}

\begin{construction}
Given a sheaf of dg-modules $(\cS, \delta_{\cS})$ on a dg-manifold $(\cX, \cO_\cX, \delta_\cX)$,
one can form the \emph{dual sheaf} $\cS^* = \underline{\Hom}_{\cO_\cX}(\cS, \cO_\cX)$.
We recall what this means precisely. For each open $U \subseteq X$, the sections in degree $k$ are
\[
    \cS^*(U)^k
    \;=\;
    \Hom^k_{\cO_\cX(U)}\!\bigl(\cS(U),\, \cO_\cX(U)\bigr)
    \;=\;
    \prod_{n \in \Z} \Hom_{\cO_\cX(U)}\!\bigl(\cS(U)^n,\, \cO_\cX(U)^{n+k}\bigr),
\]
that is, $\cO_\cX(U)$-linear maps $\alpha \colon \cS(U) \to \cO_\cX(U)$ of total degree $k$, meaning $\alpha$ maps the degree $n$ component $\cS(U)^n$ into degree $n+k$ component $\cO_\cX(U)^{n+k}$ for every $n$. We write $\langle \alpha, s \rangle$ for the pairing. The assignment $U \mapsto \cS^*(U)$ is made into a sheaf via the restriction maps of $\cS$ and $\cO_\cX$: for $V \subseteq U$, we set $\langle \alpha|_V, s|_V \rangle = \langle \alpha, s \rangle|_V$. We endow $\cS^*$ with the dual differential $\delta_{\cS^*}$ defined on sections $\alpha \in \cS^*(U)^{|\alpha|}$ and $s \in \cS(U)$ by
\[
    \langle \delta_{\cS^*}(\alpha),\, s \rangle
    \;=\;
    \delta_\cX\!\bigl(\langle \alpha, s \rangle\bigr)
    \;-\;
    (-1)^{|\alpha|}\,\bigl\langle \alpha,\, \delta_\cS(s) \bigr\rangle.
\]
\end{construction}

\begin{definition}
We say that a sheaf $\cS$ of graded $\cO_\cX$-modules is \emph{locally free of finite rank} if there exists  an open cover $\{U_i\}$ of the base manifold $\cX^0$ such that on each open set $U_i$ there is a finite collection of local generators $e_1, \ldots, e_N \in  \cS(U_i)$ of degrees $|e_1|, \ldots, |e_N| \in \Z$ such that every section $s \in \cS(U_i)$ can be written uniquely as
\[
    s \;=\; \sum_{a=1}^N f^a \cdot e_a, \qquad f^a \in \cO_\cX(U_i),
\]
with $|f^a| = |s| - |e_a|$. Equivalently, one has an isomorphism of graded $\cO_\cX|_{U_i}$-modules
\[
    \cS|_{U_i}
    \;\cong\;
    \bigoplus_{a=1}^N \cO_\cX|_{U_i}[{-|e_a|}].
\]

We say that $\cS$ has \emph{constant graded rank} if the multiset of generator degrees $(|e_1|, \ldots, |e_N|)$ is the same on every trivialising open set $U_i$.
\end{definition}

\begin{definition}
    We define the category $\catname{dgVB}$ of \emph{dg-vector bundles} consisting of pairs $(\cX, \cS)$, where $\cX$ is a dg-manifold and $\cS$ is a locally free sheaf of dg-modules of constant graded rank over $\cX$. A morphism from $(\cX, \cS)$ to $(\cY, \cP)$ is a pair $(\phi, \Lambda)$, where $\phi \colon \cX \to \cY$ is a morphism of dg-manifolds and $\Lambda \colon \cP^* \to \phi_*(\cS^*)$ is a morphism of sheaves of dg-modules.
\end{definition}

\begin{remark}
    The dual $(\cS^*, \delta_{\cS^*})$ of a locally free sheaf of dg-modules of constant graded rank is again a locally free sheaf of dg-modules of the same graded rank (with degrees negated, since the dual frame
    $e^1, \ldots, e^N$ defined by $\langle e^a, e_b \rangle = \delta^a_b$ satisfies
    $|e^a| = -|e_a|$). 
\end{remark}

\begin{example}[The tangent sheaf as a dg-vector bundle]
    A $\cO_\cX(U)$-linear map $X \colon \cO_\cX(U) \to \cO_\cX(U)$ of degree $k$ is called a \emph{vector field of degree $k$} on $U \subseteq \cX^0$ if it satisfies the graded
    Leibniz rule
    \[
        X(f \cdot g) \;=\; X(f) \cdot g \;+\; (-1)^{k|f|} f \cdot X(g)
    \]
    for all $f, g \in \cO_\cX(U)$. Such maps are also called \emph{graded derivations} of $\cO_\cX(U)$, and we set
    \[
        \mathfrak{X}_\cX(U)^k
        \;=\;
        \mathrm{Der}^k(\cO_\cX(U))
        \;=\;
        \bigl\{\, X \colon \cO_\cX(U) \to \cO_\cX(U) \text{ linear},\;
        |X| = k,\; X(fg) = X(f)g + (-1)^{k|f|}fX(g) \,\bigr\}.
    \]
    The assignment $U \mapsto \mathfrak{X}_\cX(U) = \bigoplus_k \mathfrak{X}_\cX(U)^k$ defines a sheaf of graded $\cO_\cX$-modules, the \emph{tangent sheaf} of the dg-manifold $\cX$, where the $\cO_\cX(U)$-module structure is given by $(f \cdot X)(g) = f \cdot X(g)$.

    The cohomological vector field $\delta_\cX \in \mathfrak{X}_\cX(U)^1$ acts on the sheaf of vector fields $\mathfrak{X}_\cX$ via the \emph{Lie derivative}
    \[
        L_{\delta_\cX}(X) \;=\; [\delta_\cX, X]
        \;=\; \delta_\cX \circ X - (-1)^{|X|} X \circ \delta_\cX,
    \]
    which has degree $|X| + 1$, so that $L_{\delta_\cX}$ is an odd operator on $\mathfrak{X}_\cX(U)$. Furthermore, $L_{\delta_\cX}^2 = \frac{1}{2}[\delta_\cX, [\delta_\cX, -]] = 0$ by the graded Jacobi identity and $\delta_\cX^2 = 0$. Thus $(\mathfrak{X}_\cX, L_{\delta_\cX})$ is a sheaf of $\cO_\cX$-dg-modules.

    It is locally free of constant graded rank: on a coordinate chart $U$ with local coordinates $(x^i)$ of degrees $|x^i|$, the derivations $\partial/\partial x^i$ of degree $-|x^i|$ form a local frame, so
    \[
        \mathfrak{X}_\cX\big|_U
        \;\cong\;
        \bigoplus_i \cO_\cX|_U\bigl[|x^i|\bigr],
    \]
    and the graded rank is the multiset $\{-|x^i|\}$, which is constant across overlapping charts by compatibility of the dg-manifold atlas. Hence $(\mathfrak{X}_\cX, L_{\delta_\cX}) \in \catname{dgVB}$.
\end{example}

Next proposition states that morphisms of sheaves of dg-modules are determined by their global component, which will be important for the proof of Serre--Swan equivalence. 
\begin{proposition}[{cf.\ \cite[Proposition~3.4]{SMOLKA}}]
\label{prop:glob_sections}
    Let $(\cS, \delta_\cS)$ and $(\cP, \delta_\cP)$ be locally free sheaves of dg-modules of constant graded rank over dg-manifolds $(\cX, \cO_\cX, \delta_\cX)$ and $(\cY, \cO_\cY, \delta_\cY)$ respectively. Then the following data are equivalent:
    \begin{enumerate}[label=(\roman*)]
        \item A morphism $(\phi, \Lambda) \colon (\cX, \cS) \to (\cY, \cP)$ of dg-vector bundles over $\phi \colon \cX \to \cY$.
        \item A pair $(\phi, \Lambda_0)$, where $\phi \colon \cX \to \cY$ is a morphism of dg-manifolds and $\Lambda_0 \colon \cP^*(\cY^0) \to \cS^*(\cX^0)$ is a degree-zero linear map satisfying
              \begin{align}
                  \Lambda_0(f \cdot \alpha)
                  &= \phi^*(f) \cdot \Lambda_0(\alpha)
                  \label{eq:Lambda0_linear} \\
                  \delta_{\cS^*(\cX^0)} \circ \Lambda_0
                  &= \Lambda_0 \circ \delta_{\cP^*(\cY^0)},
                  \label{eq:Lambda0_dg}
              \end{align}
        for all $f \in \cO_\cY(\cY^0)$ and $\alpha \in \cP^*(\cY^0)$.
    \end{enumerate}
\end{proposition}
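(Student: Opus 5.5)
Passing from (i) to (ii) is immediate: given $(\phi,\Lambda)$ with $\Lambda\colon \cP^*\to\phi_*(\cS^*)$ a morphism of sheaves of dg-modules, I would set $\Lambda_0=\Lambda_{\cY^0}$. This is the component on global sections, since $\phi_*(\cS^*)(\cY^0)=\cS^*(\cX^0)$. Then \eqref{eq:Lambda0_linear} is the $\cO_\cY$-linearity of $\Lambda$, where $\phi_*\cS^*$ is an $\cO_\cY$-module via $\phi^*$. Likewise \eqref{eq:Lambda0_dg} is the compatibility of $\Lambda$ with the differentials on global sections. The real content is the converse, and uniqueness: a global map $\Lambda_0$ satisfying both conditions extends uniquely to a sheaf morphism $\Lambda$ compatible with differentials.

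For the converse I would follow \cite[Proposition~3.4]{SMOLKA}, which treats the graded case. The first step forgets the differentials. Their result then gives a unique morphism of sheaves of graded $\cO_\cY$-modules $\Lambda\colon\cP^*\to\phi_*\cS^*$ whose global component is $\Lambda_0$.

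The mechanism behind that step is the usual locality argument, and I would recall it briefly. For $V\subseteq\cY^0$ open and $\alpha\in\cP^*(V)$, take a point $y\in V$ and a bump function $\chi\in\cO_\cY(\cY^0)_0$ supported in $V$ and equal to $1$ near $y$. Then $\chi\alpha$ extends by zero to a global section. One defines $\Lambda_V(\alpha)$ near $\underline{\phi}^{-1}(y)$ as $\Lambda_0(\chi\alpha)$. Linearity \eqref{eq:Lambda0_linear}, applied with functions vanishing near $y$, shows that this is independent of $\chi$ and glues. Local freeness of constant rank is what allows a local section to be written in a local frame, so that the argument reduces to finitely many generators.

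It remains to verify that this $\Lambda$ intertwines $\delta_{\cP^*}$ and $\phi_*\delta_{\cS^*}$ on every open set, and I expect this to be the only genuinely new point. Since both sides are sheaf maps, it suffices to check on germs, that is, near each point $y$. With $\chi$ as above, I would compute $\delta(\chi\alpha)=\delta_\cY(\chi)\alpha+\chi\,\delta\alpha$. The first term is harmless: $\chi$ has degree $0$ and $\delta_\cY(\chi)$ lies in $\cO_\cY(\cY^0)_1$, and it vanishes near $y$ because $\chi$ is constant there. This uses that $\delta_\cY$ is a local operator, being a derivation of a sheaf.

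Therefore, near $\underline{\phi}^{-1}(y)$,
\[
\Lambda(\delta\alpha)=\Lambda_0(\delta(\chi\alpha))=\delta\Lambda_0(\chi\alpha)=\delta\Lambda(\alpha),
\]
by \eqref{eq:Lambda0_dg} and the locality of $\delta_{\cS^*}$, which also follows from the Leibniz rule. The subtle point I would watch is that the restriction to $\underline{\phi}^{-1}$ of a neighborhood commutes with $\delta_{\cS^*}$; this holds because $\delta_{\cS^*}$ is a morphism of sheaves.

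Uniqueness is inherited from the graded statement, since any extension must agree with $\Lambda_0$ on the sections $\chi\alpha$ by the same bump-function argument. Finally, the two constructions are mutually inverse by construction.
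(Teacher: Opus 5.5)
Your proposal is correct and follows essentially the same route as the paper. Both restrict to global sections for (i)$\Rightarrow$(ii), invoke the graded result \cite[Proposition~3.4]{SMOLKA} to extend $\Lambda_0$ to a sheaf morphism, and then use a bump-function locality argument to show that this extension intertwines $\delta_{\cP^*}$ and $\delta_{\cS^*}$; your explicit computation $\delta(\chi\alpha)=\delta_\cY(\chi)\alpha+\chi\,\delta\alpha$, with $\delta_\cY(\chi)$ vanishing near $y$, is a cleaner rendering of that locality step than the paper's sketch.
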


\begin{proof}
    \textbf{(i) $\Rightarrow$ (ii).} Given a morphism $(\phi, \Lambda)$, set $\Lambda_0 = \Lambda|_{\cP^*(\cY^0)} \colon \cP^*(\cY^0) \to \cS^*(\cX^0)$. Conditions~\eqref{eq:Lambda0_linear} and~\eqref{eq:Lambda0_dg} are the $\cO_\cY(\cY^0)$-linearity of $\Lambda$ restricted to global sections, and the intertwining of the global differentials, both of which hold by assumption.

    \textbf{(ii) $\Rightarrow$ (i).} Given $(\phi, \Lambda_0)$, we first construct a morphism of the underlying graded sheaves. By~\cite[Proposition~3.4]{SMOLKA}, applied to the graded case, the map $\Lambda_0$ satisfying~\eqref{eq:Lambda0_linear} determines a unique morphism $(\phi, \Lambda) \colon \cS \to \cP$ of graded sheaves whose restriction to global dual sections recovers $\Lambda_0$, i.e. $\Lambda|_{\cP^*(\cY^0)} = \Lambda_0$. It remains to verify that the map $\Lambda$ intertwines the sheaf differentials, i.e.\ that
    \[
        \delta_{\cS^*} \circ \Lambda \;=\; \Lambda \circ \delta_{\cP^*}
    \]
    holds on the level of sheaves.

    To this end we use the following locality property: each of the sheaf maps $\delta_{\cP^*}$, $\delta_{\cS^*}$, and $\Lambda$ (and hence their compositions $\delta_{\cS^*} \circ \Lambda$ and $\Lambda \circ \delta_{\cP^*}$) is a local operator, in the sense that if $\alpha \in \cP^*(V)$ vanishes on some open $W \subseteq V$, then both $\delta_{\cP^*}(\alpha)$ and $\Lambda(\alpha)$ vanish on $W$ (resp.\ on $\phi^{-1}(W)$). For the differentials this follows from the graded Leibniz rule: if $\alpha|_W = 0$ and $\rho \in \cO_\cY$ is a bump function supported in $V \setminus W$, then $\alpha = \rho \cdot \alpha$ near $W$ and $\delta_{\cP^*}(\rho \cdot \alpha) = \delta_\cY(\rho) \cdot \alpha + (-1)^{|\rho|}\rho \cdot \delta_{\cP^*}(\alpha)$ vanishes on $W$. For $\Lambda$ it follows from $\cO_\cY$-linearity via~\eqref{eq:Lambda0_linear}.

    Since $\delta_{\cS^*} \circ \Lambda$ and $\Lambda \circ \delta_{\cP^*}$ are both local, they are each determined by their values on global sections. Indeed, any section $\alpha \in \cP^*(V)$ over an open $V \subseteq \cY^0$ can be compared with a global section after multiplying by a suitable bump function, and locality ensures the compositions agree wherever the bump function is identically $1$. Since $\Lambda_0$ satisfies~\eqref{eq:Lambda0_dg} on global sections, it follows that $\delta_{\cS^*} \circ \Lambda = \Lambda \circ \delta_{\cP^*}$ as sheaf morphisms. Hence $(\phi, \Lambda)$ is a morphism of sheaves of dg-modules.
\end{proof}

\subsection{Projective dg-modules}

Throughout this section, $(\cX, \cO_\cX, \delta_\cX)$ denotes a dg-manifold with underlying smooth manifold $X = \cX^0$, and $A = \cO_\cX(X)$ denotes its dg-algebra of global sections.

\begin{definition}
    \label{def:finitely_gen}
    A dg-module $(P, \delta_P)$ over a dg-algebra $(A, \delta_A)$ is \emph{finitely generated over $A$} if there exists a finite set of $\{p_i\}_{i=1}^n \subset P$,  such that every element $p \in P$ can be written as a finite sum
    \[
        p \;=\; \sum_{i=1}^n f_i \cdot p_i, \qquad f_i \in A,\quad |f_i| = |p| - |p_i|.
    \]
\end{definition}

\begin{definition}
    \label{def:graded_projective}
    A dg-module $(P, \delta_P)$ over a dg-algebra $(A, \delta_A)$ is \emph{graded projective} if it is projective as a graded $A$-module, i.e.\ if for any degree-zero graded $A$-module morphism $\psi \colon P \to M$ and surjective $\phi \colon Q \twoheadrightarrow M$, there exists a degree-zero graded $A$-module morphism $\tilde{\psi} \colon P \to Q$ such that $\phi \circ \tilde{\psi} = \psi$:
    \[
    \begin{tikzcd}
        & Q \arrow[d, "\phi", two heads] \\
        P \arrow[r, "\psi"'] \arrow[ur, dashed, "\tilde\psi"] & M.
    \end{tikzcd}
    \]
\end{definition}

\begin{proposition}[{cf.~\cite[Proposition~4.13]{SMOLKA}}]
\label{prop:dual_projective}
    For any dg-$A$-module $(P, \delta_P)$, the dual dg-module is defined as $P^* = \underline{\Hom}_A(P, A)$ with the differential $\delta_{P^*}$ given by the graded commutator $\delta_{P^*}(\alpha) = [\delta_A, \alpha]$. If $(P, \delta_P)$ is graded projective and finitely generated, then so is the dual module $(P^*, \delta_{P^*})$, and the canonical degree-zero dg-$A$-module map
    \[
        \iota \colon P \to P^{**}, \qquad \iota(p)(\alpha) = (-1)^{|p||\alpha|}\alpha(p),
    \]
    is an isomorphism of dg-$A$-modules.
\end{proposition}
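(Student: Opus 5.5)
The plan is to reduce everything to the graded case treated by Smolka and Vysoky \cite[Proposition~4.13]{SMOLKA}, and then check by hand the two dg-compatibilities. Only two things need checking: that $\delta_{P^*}$ is a genuine differential making $P^*$ a dg-module, and that $\iota$ commutes with differentials.

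\textbf{Step 1: the graded statement.} Graded projectivity and finite generation only involve the graded $A$-module structure. Definitions~\ref{def:finitely_gen} and~\ref{def:graded_projective} do not involve $\delta$ at all. So I would argue purely algebraically, forgetting differentials. A finitely generated graded projective $P$ is a direct summand of a finite free graded module $F=\bigoplus_{i=1}^n A[-|p_i|]$. To see this, take the surjection $F\to P$ sending the generators to the $p_i$, and split it using projectivity. This gives $P\oplus Q\cong F$.

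Dualizing gives $P^*\oplus Q^*\cong F^*\cong\bigoplus_i A[|p_i|]$. Hence $P^*$ is a direct summand of a finite free module, and is therefore finitely generated and graded projective.

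For $\iota$: on a free rank-one module $A[k]$ the map $\iota$ is visibly bijective. The sign $(-1)^{|p||\alpha|}$ is exactly what is needed for $\iota(p)$ to be $A$-linear in the graded sense; I will verify this directly. The map $\iota$ is additive and natural with respect to direct sums, so it is an isomorphism on $F$. It is therefore also an isomorphism on the summand $P$, by naturality applied to the inclusion and projection. This is the content of \cite[Proposition~4.13]{SMOLKA}, which I would cite, spelling out the summand argument.

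\textbf{Step 2: the dg-structure on $P^*$.} Here $\delta_{P^*}(\alpha)=\delta_A\circ\alpha-(-1)^{|\alpha|}\alpha\circ\delta_P$, which is the internal Hom differential from the paper's definition of $\underline{\mathrm{Hom}}$. I would check three things:
\begin{itemize}
\item $\delta_{P^*}(\alpha)$ is again $A$-linear of degree $|\alpha|+1$, using the Leibniz rules for $\delta_A$ and $\delta_P$. The cross terms $\delta_A(f)\alpha(p)$ cancel.
\item $\delta_{P^*}^2=0$, because $\delta_{P^*}^2(\alpha)=\delta_A^2\alpha-\alpha\delta_P^2=0$.
\item The Leibniz rule $\delta_{P^*}(f\alpha)=\delta_A(f)\alpha+(-1)^{|f|}f\delta_{P^*}(\alpha)$ holds.
\end{itemize}
These are routine sign computations. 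Applying Step 2 twice gives the dg-structure on $P^{**}$.

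\textbf{Step 3: $\iota$ is a chain map.} I would evaluate both sides on $p\in P$ and $\alpha\in P^*$. The first side is $\iota(\delta_P p)(\alpha)=(-1)^{(|p|+1)|\alpha|}\alpha(\delta_P p)$. The second side is
\[
\delta_{P^{**}}(\iota p)(\alpha)=\delta_A\bigl((-1)^{|p||\alpha|}\alpha(p)\bigr)-(-1)^{|p|}(-1)^{|p|(|\alpha|+1)}(\delta_{P^*}\alpha)(p).
\]
Expanding $(\delta_{P^*}\alpha)(p)=\delta_A(\alpha(p))-(-1)^{|\alpha|}\alpha(\delta_Pp)$, the $\delta_A(\alpha(p))$ terms cancel. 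What remains is $(-1)^{|p||\alpha|+|\alpha|}\alpha(\delta_P p)$, which matches the first side.

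I expect the \textbf{main obstacle} to be sign bookkeeping rather than any conceptual step. The delicate points are:
\begin{itemize}
\item making sure the shift conventions of Definition~\ref{def:shift} for $A[k]$ agree with the Koszul sign in $\iota$, so that $\iota$ is $A$-linear rather than merely $\R$-linear;
\item checking that the isomorphism $F^*\cong\bigoplus A[|p_i|]$ respects those conventions.
\end{itemize}
A degree-zero dg-map that is bijective is automatically a dg-isomorphism, so Steps 1 and 3 together finish the proof.
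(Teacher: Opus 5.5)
Your proposal is correct and takes essentially the same approach as the paper, whose proof only says that the argument of \cite[Proposition~4.13]{SMOLKA} carries over without modification. Your Steps 2 and 3 make explicit the dg-compatibility checks that the paper leaves implicit (that $\delta_{P^*}$ is a differential satisfying the Leibniz rule, and that $\iota$ is a chain map), and your sign computation in Step 3 is right.
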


\begin{proof}
    The proof carries over from~\cite[Proposition~4.13]{SMOLKA} without modification.
\end{proof}

\begin{definition}
\label{def:dgProj}
    The category $\catname{dgProj}$ is defined as follows.
    \begin{itemize}
        \item \emph{Objects} are pairs $(\cX, P)$, where $\cX$ is a dg-manifold and $(P, \delta_P)$ is a graded projective finitely generated dg-module over $A = \cO_\cX(\cX^0)$.
        \item \emph{Morphisms} from $(\cX, P)$ to $(\cY, Q)$ are pairs $(\phi, \Lambda_0)$, where $\phi \colon \cX \to \cY$ is a morphism of dg-manifolds and $\Lambda_0 \colon Q^* \to P^*$ is a degree-zero linear map satisfying
              \begin{align}
                  \Lambda_0(f \cdot \alpha)
                  &= \phi^*(f) \cdot \Lambda_0(\alpha),
                  \label{eq:proj_linear} \\
                  \delta_{P^*} \circ \Lambda_0
                  &= \Lambda_0 \circ \delta_{Q^*},
                  \label{eq:proj_dg}
              \end{align}
        for all global sections $f \in \cO_\cY(\cY^0)$ and elements $\alpha \in Q^*$.
    \end{itemize}
\end{definition}

\begin{proposition}[{cf.~\cite[Proposition~4.21]{SMOLKA}}]
\label{prop:functor_upsilon}
    Let $(\cS, \delta_\cS) \in \catname{dgVB}$ be a dg-vector bundle over a dg-manifold
    $\cX$. Then $(\cX,\, \cS(\cX^0))$ is an object of $\catname{dgProj}$. Moreover, the assignment
    \[
        \Upsilon(\cS, \delta_\cS) \;=\; \bigl(\cX,\, \cS(\cX^0)\bigr),
        \qquad
        \Upsilon(\phi, \Lambda) \;=\; (\phi, \Lambda_0),
    \]
    where $\Lambda_0 = \Lambda|_{\cP^*(\cY^0)}$ denotes restriction to global dual sections, defines a fully faithful functor
    \[
        \Upsilon \colon \catname{dgVB} \to \catname{dgProj}.
    \]
\end{proposition}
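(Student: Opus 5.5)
We first check that $\Upsilon$ is well defined on objects, i.e. that $P=\cS(\cX^0)$ is a graded projective, finitely generated dg-module over $A=\cO_\cX(\cX^0)$. The dg-structure is immediate: restricting the sheaf differential $\delta_\cS$ to global sections gives a degree $+1$ map with $\delta_\cS^2=0$ that satisfies the Leibniz rule over $\delta_\cX$. Graded projectivity and finite generation only concern the underlying graded module. Forgetting $\delta_\cS$ and $\delta_\cX$, the sheaf $\cS$ is a locally free graded sheaf of constant graded rank on a graded manifold. We then invoke the graded result \cite[Proposition~4.21]{SMOLKA} directly. For completeness, its argument runs as follows. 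Cover $\cX^0$ by finitely many (after a standard colouring argument, $\dim\cX^0+1$) trivialising open sets $U_i$ with subordinate degree-zero partition of unity $\lambda_i$. The global sections $\lambda_i e^{(i)}_a$, extended by zero, generate $P$. This exhibits $P$ as a direct summand of a finite free graded module $\bigoplus A[-k]$, the splitting being given by $\lambda_i^{1/2}$-weighted coordinate maps. Hence $P$ is projective and finitely generated.

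On morphisms, $\Upsilon(\phi,\Lambda)=(\phi,\Lambda_0)$ satisfies \eqref{eq:proj_linear} and \eqref{eq:proj_dg}, which is exactly the direction (i)$\Rightarrow$(ii) of Proposition~\ref{prop:glob_sections}. For this to be a morphism in $\catname{dgProj}$ we must also identify $\cS^*(\cX^0)$ with $P^*=\underline{\Hom}_A(P,A)$ compatibly with differentials. The map $\cS^*(\cX^0)\to P^*$ is restriction to global sections. It is bijective by locality and bump functions, as in the graded case \cite{SMOLKA}. Compatibility of differentials follows because both duals are defined by the same formula $\langle\delta\alpha,s\rangle=\delta_\cX\langle\alpha,s\rangle-(-1)^{|\alpha|}\langle\alpha,\delta s\rangle$ evaluated on global $s$. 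Functoriality, i.e. preservation of identities and composition, is clear since restriction to global sections commutes with composition.

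Full faithfulness is then precisely the bijection established in Proposition~\ref{prop:glob_sections}, combined with the identification $\cS^*(\cX^0)\cong P^*$ above. Faithfulness holds because a sheaf morphism $\Lambda$ between locally free sheaves is determined by its global part: locally it is given by coefficients on the frame, and these are recovered using bump functions. Fullness is direction (ii)$\Rightarrow$(i), which produces $\Lambda$ from $\Lambda_0$.

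The main obstacle is the identification $\cS^*(\cX^0)\cong\underline{\Hom}_A(\cS(\cX^0),A)$ as dg-modules. We must show that an $A$-linear map on global sections extends uniquely to a sheaf map. We would handle this by first proving that $A$-linear maps are local operators: multiplying by bump functions shows that sections vanishing on an open set $W$ are sent to elements vanishing on $W$. We would then define the extension on $U$ via $\alpha(\rho s)$ for bump functions $\rho\equiv1$ near a point. This is the graded argument of \cite{SMOLKA}; the differentials then come along for free, exactly as in the locality argument in the proof of Proposition~\ref{prop:glob_sections}.
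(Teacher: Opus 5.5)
Your proposal is correct and follows the paper's route. Projectivity and finite generation are imported from the graded result of Smolka--Vysoky, and full faithfulness is read off from Proposition~\ref{prop:glob_sections}. Note that the identification $\cS^*(\cX^0)\cong\underline{\Hom}_A(P,A)$ you flag as the main obstacle is definitional in the paper, since $\cS^*(U)$ is defined there as $\Hom_{\cO_\cX(U)}(\cS(U),\cO_\cX(U))$; your locality argument is really what justifies that this assignment is a sheaf, rather than a step the paper omits.
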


\begin{proof}
    That $\cS(\cX^0)$ is graded projective and finitely generated over $\cO_\cX(\cX^0)$ follows from~\cite[Proposition~4.21]{SMOLKA} by the same argument as in the graded case.  Full faithfulness of $\Upsilon$ is a direct consequence of Proposition~\ref{prop:glob_sections}.
\end{proof}

\begin{theorem}[Serre--Swan for dg-vector bundles]
\label{thm:serre_swan}
    For every graded projective dg-module of finite type $P \in \catname{dgProj}$ over the dg-manifold $\cX$, there exists a dg-vector bundle $(\cS, \delta_\cS) \in \catname{dgVB}$ over $\cX$, unique up to isomorphism, such that
    \[
        P \;\cong\; \cS(\cX^0)
    \]
    as dg-$\cO_\cX(\cX^0)$-modules. Consequently, the global sections functor
    \[
        \Upsilon \colon \catname{dgVB} \xrightarrow{\;\sim\;} \catname{dgProj}
    \]
    is essentially surjective. Combined with full faithfulness given by Proposition~\ref{prop:functor_upsilon}, this implies that the global section functor $\Upsilon$ is an equivalence of categories.
\end{theorem}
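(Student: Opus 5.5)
The plan is to split the problem into a purely graded part, handled by the graded Serre--Swan theorem of Smolka and Vysoky \cite{SMOLKA}, and a differential part. Only the differential part requires new work. Fix $P \in \catname{dgProj}$ over $\cX$ and write $A = \cO_\cX(\cX^0)$.

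\emph{Step 1 (the graded bundle).} Forget $\delta_P$ and regard $P$ as a finitely generated graded projective $A$-module. The graded Serre--Swan theorem of \cite{SMOLKA} then provides a locally free sheaf $\cS$ of graded $\cO_\cX$-modules of constant graded rank, together with a graded isomorphism $P \cong \cS(\cX^0)$. Concretely, I will use the model in which $\cS(U)$ is identified with $\cO_\cX(U)\otimes_A P$. This identification comes from the localization argument in \cite{SMOLKA}: restriction $A \to \cO_\cX(U)$ behaves like a localization, and $P$ is a direct summand of a finite free module $A^{N}$ with shifts.

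\emph{Step 2 (the differential).} On each $\cO_\cX(U)\otimes_A P$ I define
\[
\delta_\cS(f\otimes p) = \delta_\cX(f)\otimes p + (-1)^{|f|} f\otimes \delta_P(p).
\]
The checks that $\delta_\cS$ is compatible with the tensor relations $fa\otimes p \sim f\otimes ap$, that $\delta_\cS^2 = 0$, and that the Leibniz rule holds are word for word those in the proof of Proposition~\ref{prop:dg-module_pullback}, since $A \to \cO_\cX(U)$ is a map of dg-algebras. Compatibility with restrictions is immediate, so $\delta_\cS$ is a morphism of sheaves. On global sections it recovers $\delta_P$ under $P \cong \cS(\cX^0)$.

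I expect the main obstacle to be the identification $\cS(U) \cong \cO_\cX(U)\otimes_A P$ itself, which is needed for the formula above to define $\delta_\cS$ on genuine sections over small $U$.

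If that identification is awkward to extract from \cite{SMOLKA}, my fallback is the bump-function locality argument from the proof of Proposition~\ref{prop:glob_sections}:
\begin{itemize}
\item for a local section $s \in \cS(U)$ and a point $x \in U$, choose a bump function $\rho$ that is identically $1$ near $x$, so that $\rho s$ extends to a global section;
\item set $\delta_\cS(s) := \delta_P(\rho s)$ near $x$;
\item show independence of $\rho$ using the Leibniz rule and the fact that $\delta_\cX(\rho)$ vanishes where $\rho \equiv 1$.
\end{itemize}
This produces a well-defined local operator, and the sheaf axioms then give $\delta_\cS$.

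\emph{Step 3 (uniqueness).} Suppose $\cS'$ is another dg-vector bundle with $\cS'(\cX^0) \cong P$ as dg-modules. Dualizing via Proposition~\ref{prop:dual_projective}, the isomorphism becomes an isomorphism $\cS^*(\cX^0) \cong \cS'^*(\cX^0)$ satisfying \eqref{eq:Lambda0_linear} and \eqref{eq:Lambda0_dg} with $\phi = \mathrm{id}_\cX$, using the identification $P^{**} \cong P$. Proposition~\ref{prop:glob_sections} then lifts this isomorphism and its inverse to mutually inverse morphisms of dg-vector bundles, so $\cS \cong \cS'$.

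\emph{Conclusion.} Steps 1--3 give essential surjectivity of $\Upsilon$. Together with the full faithfulness from Proposition~\ref{prop:functor_upsilon}, this shows that $\Upsilon$ is an equivalence of categories.
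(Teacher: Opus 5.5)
Your proposal is correct. Your fallback in Step~2 is essentially the paper's own argument. The paper applies the graded Serre--Swan theorem of \cite{SMOLKA} and transports $\delta_P$ to $\cS(\cX^0)$. It then extends this to a sheaf differential by comparing local sections, on opens $W$ with $\overline{W}\subset U$, with global sections via bump functions, checking independence of choices with the Leibniz rule and gluing by the sheaf axiom. Uniqueness is left to full faithfulness of $\Upsilon$, which your Step~3 simply unpacks through Proposition~\ref{prop:glob_sections}.

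Your primary route in Step~2 is genuinely different. Instead of a local extension argument, you define $\delta_\cS$ on $\cO_\cX(U)\otimes_A P$ by an explicit tensor-product formula. Well-definedness, $\delta_\cS^2=0$, the Leibniz rule and compatibility with restrictions then reduce to the algebra of Proposition~\ref{prop:dg-module_pullback}, with no locality or gluing step.

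The price is the identification $\cS(U)\cong\cO_\cX(U)\otimes_A P$. It does hold, but you do not need any localization property of $A\to\cO_\cX(U)$ to get it. The natural comparison map $\cO_\cX(U)\otimes_A\cS(\cX^0)\to\cS(U)$, $f\otimes s\mapsto f\cdot s|_U$, is an isomorphism for free sheaves $\bigoplus_i\cO_\cX[k_i]$ of finite rank. Since $P$ is a direct summand of a finite free shifted module and the graded Serre--Swan correspondence is an additive equivalence, $\cS$ is a direct summand of the corresponding free sheaf. The comparison map for $\cS$ is therefore a retract of an isomorphism, hence an isomorphism.

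The paper's bump-function route needs nothing about $\cS(U)$ beyond locality of the operators involved. Yours gives a closed formula for $\delta_\cS$ and makes the dg-compatibilities immediate. It also sidesteps the slightly delicate bookkeeping in the paper's bump-function step: you state correctly that $\rho$ must be identically $1$ near the point and that $\rho s$ extends to a global section.
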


\begin{proof}
    By the graded Serre--Swan theorem~\cite[Theorem~4.34]{SMOLKA}, there exists a locally free sheaf of graded $\cO_\cX$-modules $\cS$ of constant graded rank such that
    $P \cong \cS(\cX^0)$ as graded $\cO_\cX(\cX^0)$-modules. It remains to show that $\cS$ is a sheaf of dg-modules with a differential compatible with the differential on the dg-module $P$.

    Via the isomorphism $P \cong \cS(\cX^0)$, the differential $\delta_P$ defines a differential $\delta_\cS(\cX^0) \colon \cS(\cX^0) \to \cS(\cX^0)$ on global sections satisfying
    the graded Leibniz rule with respect to $\cO_\cX(\cX^0)$ and $\delta_{\cO_\cX(\cX^0)}$. We extend this to a sheaf differential $\delta_\cS \colon \cS \to \cS$ by a classical bump function argument. Take any open $U \subseteq X$ with local frame $(e_1, \ldots, e_N)$, and any section $s \in \cS(U)$. Then on any open $W \subset U$, such that $\overline{W} \subset U$, we can write $s = \rho \cdot \tilde{s}\vert_W$, where $\rho$ is a bump function on $\cX^0$ supported in $U$ and $\tilde{s} \in \cS(\cX^0)$ is a global section. We then set \[\delta_\cS(s|W) = \delta_\cS(\cX^0)(\tilde{s})|_W.\] The $\cO_\cX(\cX^0)$-linearity of $\delta_\cS(\cX^0)$ and the Leibniz rule ensure that this definition is independent of the choice of $\rho$ and $\tilde{s}$. Covering $U$ by such $W$ and using the sheaf axiom for $\cS$ glues these local definitions into a well-defined map $\delta_\cS(U) \colon \cS(U) \to \cS(U)$. These maps are compatible with restrictions, and therefore assemble into sheaf map $\delta_\cS \colon \cS \to \cS$. Furthermore, by construction $\delta_\cS(U)$ satisfies the Leibniz rule and $\delta_\cS^2(U) = 0$. Thus $(\cS, \delta_\cS) \in
    \catname{dgVB}$, and $\Upsilon(\cS, \delta_\cS) = (\cX, P)$ by construction.

    Uniqueness up to isomorphism follows from full faithfulness of $\Upsilon$ (Proposition~\ref{prop:functor_upsilon}). Together with essential surjectivity, this gives the equivalence of categories.
\end{proof}

\subsection*{Summary of notation} \label{sec:sum_notation}
\addcontentsline{toc}{section}{\nameref{sec:sum_notation}}
\begin{center}
\setlength{\tabcolsep}{14pt}
\renewcommand{\arraystretch}{1.5}
  \begin{longtable}{ l p{10cm}}

 $\cX = (\cX^0, \mathcal{O}_\cX, \delta_\cX)$ & Dg-manifold $\cX$ with underlying manifold $\cX^0$, sheaf of functions $\mathcal{O}_\cX$, and differential $\delta_\cX$. See Definition~\ref{def:dgmanifold}.\\

$f = (\underline{f}, f^*) \colon \cX \to \cY$  & Morphism of dg-manifolds $\cX$ and $\cY$, consisting of a smooth map $\underline{f}:\cX^0 \to \cY^0$ and a pullback map $f^*:\mathcal{O}_\cY \to \mathcal{O}_\cX$. See Definition~\ref{def:dgmanifold}.\\

$\cin(\cX)$ & Global sections of structure sheaf $\mathcal{O}_\cX$ of the dg-manifold $\cX$. See Notation~\ref{not:globalfunctions}.\\

$(\Omega^1_{\cin(\cX)}, \delta^1_\cX)$ or $(T^*_\cX, \delta^1_\cX)$ & The module of Kähler differentials on the algebra of global functions $\cin(\cX)$ with the differential $\delta^1_\cX$. See Definition~\ref{def:kahler}.\\

$(T_{\cin(\cX)}, \mathcal{L}_{\delta_\cX})$ or $(T_\cX, \mathcal{L}_{\delta_\cX})$ & The module of vector fields on $\cin(\cX)$ with the Lie derivative $\mathcal{L}_{\delta_\cX}$. See Definition~\ref{def:tangentmodule}.\\

$\Omega^p_{\cin(\cX)}$ & The module of $p$-forms on the dg-manifold $\cX$. See Definition~\ref{def:dgformsmodule}.\\

$d \colon \Omega^p_{\cin(\cX)} \to \Omega^{p+1}_{\cin(\cX)}$ & The exterior derivative of the de Rham complex of $\cin(\cX)$. See Definition~\ref{def:dgformsmodule} and Theorem~\ref{thm:derham_diff}. \\

$\iota^*_\cX \mathcal{M}$ &
A pullback of a $\cin(\cX^0)$-module $\mathcal{M}$ to the dg-manifold $\cX$ given by \[\iota^*_\cX \mathcal{M} = \mathcal{O}_\cX \otimes_{\cin(\cX^0)} \mathcal{M}.\] See Notation~\ref{not:pullback}.\\

$V_\cX$ & The module of sections of a trivial vector bundle with fiber $V$ over the dg-manifold $\cX$ given by $\cin(\cX) \otimes_\R V$. See Notation~\ref{not:pullback}.\\

$\cone(f)$ & The cone of a morphism of dg-$\cin(\cX)$-modules $f \colon A \to B$. See Definition~\ref{def:cone_cocone} \\
$\cocone(f)$ & The cocone of a morphism of dg-$\cin(\cX)$-modules $f \colon A \to B$. See Definition~\ref{def:cone_cocone}. \\

$d\mu \colon T_\cZ \to \fg^*_\cZ$ & The differential of the moment map $\mu$ restricted to the derived zero set $\cZ$. See Proposition~\ref{prop:kahler}.\\

$\cZ$ & The derived zero locus of the moment map $\mu \colon M \to \fg^*$, i.e. \[\cZ = (M, \cin(M)\otimes_\R \wedge^{-\bullet}\fg, \iota_\mu).\] See Equation~\eqref{eq:zerolevelset} and Construction~\ref{ctr:zerosection}.\\

$\cZ/G$ & The action groupoid of the derived zero set $\cZ$ by the Lie group $G$, i.e. $\cZ/G = (G\times \cZ \rightrightarrows \cZ)$. See Construction~\ref{ctr:actiongroupoid}.\\

$u \colon \cZ \to G\times \cZ$ & The unit map of the action groupoid $\cZ/G$. See Construction~\ref{ctr:actiongroupoid_general}.\\

$(\cZ/G)^\bullet$ & The nerve of the action groupoid $\cZ/G$. See Definition~\ref{def:nerve} and Construction~\ref{ctr:actiongroupoid_nerve}.\\

$(C^{\bullet}(\cZ/G), \partial)$ & The complex of cochains on the nerve of the action groupoid $\cZ/G$. See Definition~\ref{def:cochains}.\\

$X \in \fg$ & Generators of the algebra $\cin(\cZ)$ in degree -1. See the discussion after Equation~\eqref{eq:zerolevelset}.\\

$dX \in T^*_\cZ$ & A one-form on $\cZ$ of derived degree -1, which is the image of an element $X \in \fg$ under the exterior derivative $d \colon \cin(\cZ) \to \Omega^1_{\cin(\cZ)}$. See Notation~\ref{not:dg_alg_not}.\\

$\iota_\sigma \in T_\cZ$ & The vector field on $\cZ$ of degree 1 given by contraction $\iota_\sigma \colon \cin(M) \otimes \wedge^{\bullet} \fg \to \cin(M) \otimes \wedge^{\bullet -1}  \fg$ with a constant section $\sigma \in \fg^*$. See Notation~\ref{not:dg_alg_not}.\\

$\rho \colon \cin(\cZ) \otimes \fg \to T_\cZ$ & The anchor map of the Lie algebroid of the action groupoid $\cZ/G$. See Proposition~\ref{prop:algebroid}. \\

$\rho_0 \colon \cin(\cZ) \otimes \fg \to \iota^*_\cZ T_M$ & The infinitesimal action of the Lie algebra $\fg$ on $T_M$ restricted to the derived zero set $\cZ$. See Proposition~\ref{prop:algebroid}.\\

$\eta \colon \cin(\cZ) \otimes \fg \to \cin(\cZ) \otimes \fg^*[-1]$ & A chain homotopy between $d\mu \circ \rho_0$ and $0$ given by $\eta(X) =\textrm{ad}^*_X$. See Proposition~\ref{prop:algebroid}.\\

$E_i$ & A basis of the Lie algebra $\fg$. See the proof of Proposition~\ref{prop:algebroid}.\\

$\{\overline{E}_i\}$ & A left invariant frame of the Lie group $G$ such that $\overline{E}_i(e) = E_i$. See the proof of Proposition~\ref{prop:algebroid}.\\

$\{\sigma_i\}$ & A basis of the dual space $\fg^*$ which is dual to the basis $E_i$ of $\fg$. See the proof of Proposition~\ref{prop:algebroid}.\\

$c_{ij}(g)$ & Coefficients of the adjoint action $\textrm{Ad}_g$ in the basis $E_i$, i.e. $\textrm{Ad}_g = \sum_{i,j} c_{ij}(g) E_i \otimes \sigma_j$. See the proof of Proposition~\ref{prop:algebroid}.\\

$X^{\sharp}$ & The vector field on the base manifold $M$ induced by the infinitesimal action of $X \in \fg$. See the proof of Lemma~\ref{lem:anchor_equivariant}.\\

$\mathrm{DR}^m(\mathcal{G})$ & The de Rham complex of a dg-groupoid $\mathcal{G}$, defined as the total complex of the triple complex $(\Omega^{\bullet}_{\cin(N_{\bullet}\cG)}, d, \delta, \partial^*)$. See Definition~\ref{def:bottshulman}.\\

$\omega_{\mathrm{red}}$ & The reduced form on the action groupoid $\cZ/G$, defined as $\omega_{\mathrm{red}} = \iota^*_\cZ \omega + d\theta \in \mathrm{DR}^2(\cZ/G)$, where $\iota^*_\cZ \omega$ is the original symplectic form on $M$ pulled back to $\cZ$ and $\theta$ is the standard (left) Maurer--Cartan form on $G$ pulled back to $G \times \cZ$. See Construction~\ref{ctr:reducedform}.\\

$T_{\cZ/G}$ & The tangent complex of the action groupoid $\cZ/G$, given by $T_{\cZ/G} = \fg_\cZ \xrightarrow{\rho} T_\cZ$. See Construction~\ref{ctr:tangent_complex_action}.\\

$T^*_{\cZ/G}$ & The cotangent complex of the action groupoid $\cZ/G$, given by $T^*_{\cZ/G} = T^*_\cZ \xrightarrow{\rho^*} \fg^*_\cZ$. See Construction~\ref{ctr:tangent_complex_action}.\\

$\alpha \colon T_\cZ[1] \to \fg^*_\cZ$ & The map induced by the contraction with $d\theta$, given on generators by $\alpha(V) = 0$ for $V \in \mathfrak{X}(M)$ and $\alpha(\iota_{\sigma_i}) = \sigma_i$ for $\iota_{\sigma_i} \in T_\cZ$. See Construction~\ref{ctr:inducedmap}.\\

$\alpha^* \colon T^*_\cZ[-1] \to \fg_\cZ$ & The dual map of $\alpha$, given on generators by $\alpha^*(\beta) = 0$ for $\beta \in \Omega^1(M)$ and $\alpha^*(dE_i) = E_i$ for $dE_i \in T^*_\cZ$. See Construction~\ref{ctr:inducedmap}.\\

$\omega_{\mathrm{red}}^{\flat} \colon \Tot(T_{\cZ/G}) \to \Tot(T^*_{\cZ/G})$ & The map induced by the reduced form $\omega_{\mathrm{red}}$, defined as $\omega_{\mathrm{red}}^{\flat} = \alpha + (\iota^*_\cZ \omega)^{\flat} + \alpha^*$. See Construction~\ref{ctr:inducedmap}.\\

 \end{longtable}
\end{center}\mbox{}\\
\end{document}